\DeclareMathAlphabet{\mathbbold}{U}{bbold}{m}{n}
\newtheorem{theorem}{Theorem}[section]
\newtheorem{lemma}[theorem]{Lemma}
\newtheorem{warning}[theorem]{Warning}
\newtheorem{conjecture}[theorem]{Conjecture}
\newtheorem{remark}[theorem]{Remark}
\newtheorem{corollary}[theorem]{Corollary}
\newtheorem{proposition}[theorem]{Proposition}
\theoremstyle{definition}
\newtheorem{definition}[theorem]{Definition}
\newtheorem{example}[theorem]{Example}
\theoremstyle{remark}
\newcommand{\ZZ}{\mathbf{Z}}
\newcommand{\QQ}{\mathbf{Q}}
\newcommand{\RR}{\mathbf{R}}
\newcommand{\FF}{\mathbf{F}}
\newcommand{\CC}{\mathbf{C}}
\newcommand{\un}{\underline}
\newcommand{\Lie}{\mathrm{Lie}}
\newcommand{\val}{\mathrm{val}}
\newcommand{\sh}{\mathrm{sh}}
\newcommand{\orb}{\mathrm{orb}}
\newcommand{\an}{\mathrm{an}}
\newcommand{\op}{\mathrm{op}}
\newcommand{\cyc}{\mathrm{cyc}}
\newcommand{\Gal}{\mathrm{Gal}}
\newcommand{\R}{\mathrm{R}}
\newcommand{\Mod}{\mathrm{Mod}}
\newcommand{\Mat}{\mathrm{Mat}}
\newcommand{\la}{\mathrm{la}}
\newcommand{\LT}{\mathrm{LT}}
\newcommand{\A}{\mathrm{A}}
\newcommand{\han}{\text{-an}}
\newcommand{\hla}{\text{-la}}
\newcommand{\pa}{\mathrm{pa}}
\newcommand{\WLambda}{\widetilde{\Lambda}}
\newcommand{\wt}{\widetilde}
\title{Locally analytic vectors and decompletion in mixed characteristic}
\author{Gal Porat
\\{galporat1@gmail.com}}
\begin{document}

\maketitle

\begin{abstract}
In $p$-adic Hodge theory and the $p$-adic Langlands program, Banach spaces with $\QQ_p$-coefficients and $p$-adic Lie group actions are central. Studying the subrepresentation of $G$-locally analytic vectors, $W^{\la}$, is useful because $W^{\la}$ can be studied via the Lie algebra $\Lie(G)$, which simplifies the action of $G$. Additionally, $W^{\la}$ often behaves as a decompletion of $W$, making it closer to an algebraic or geometric object.

This article introduces a notion of locally analytic vectors for $W$ in a mixed characteristic setting, specifically for $\ZZ_p$-Tate algebras. This generalization encompasses the classical definition and also specializes to super-Hölder vectors in characteristic $p$. Using binomial expansions instead of Taylor series, this new definition bridges locally analytic vectors in characteristic $0$ and characteristic $p$.

Our main theorem shows that under certain conditions, the map $W \mapsto W^{\la}$ acts as a descent, and the derived locally analytic vectors $\R_{\la}^i(W)$ vanish for $i \geq 1$. This result extends Theorem C of \cite{Po24}, providing new tools for propagating information about locally analytic vectors from characteristic $0$ to characteristic $p$.

We provide three applications: a new proof of Berger-Rozensztajn's main result using characteristic $0$ methods, the introduction of an integral multivariable ring $\widetilde{\mathbf{A}}_{\LT}^{\dagger,\la}$ in the Lubin-Tate setting, and a novel interpretation of the classical Cohen ring ${\bf{A}}_{\QQ_p}$ from the theory of $(\varphi,\Gamma)$-modules in terms of locally analytic vectors. 
\end{abstract}

\maketitle

\tableofcontents

\section{Introduction}

In $p$-adic Hodge theory and in the $p$-adic Langlands program, one often encounters a Banach space $W$ with $\QQ_p$-coefficients endowed with an action of a $p$-adic Lie group $G$. It has proven useful to study the subrepresentation of $G$-locally analytic vectors $W^{\la} \subset W.$ One reason for this is that $W^{\la}$ has an action of the Lie algebra $\Lie(G),$ which is often simpler to study than the action of $G$. Another reason for this usefulness is that $W^{\la}$ sometimes behaves as a decompletion of $W$ which is then closer to being of an algebraic  or geometric nature. 

As a classical example we consider Sen theory. Let $\QQ_p^{\cyc}$ denote the cyclotomic extension obtained by adding all $p$-power roots of unity to $\QQ_p$, $K$ be a finite extension of $\QQ_p$, $K^\cyc = K\QQ_p^{\cyc}$, $H_K = \Gal(\overline{K}/K^\cyc)$ and $G = \Gal(K^{\cyc}/K)$ which is isomorphic to an open subgroup of $\ZZ_p^\times$. We also let $\CC_p$ be the completion of $\overline{\QQ}_p$. If $X$ is a smooth proper variety over $K$ let $V = \mathrm{H}^{n}_{\text{\'et}}(X_{\overline{K}},\QQ_p)$. Then $V$ is a representation of $\Gal(\overline{K}/K)$, and we set
$$W = (\CC_p \otimes_{\QQ_p} V)^{H_K}.$$
It can then be shown (see for example Théorème 3.4 of \cite{BC16}) that $W^{\la}$ is a vector space over $K^{\cyc}$ to which $W$ descends. Furthermore, the action of $\Lie(G) \cong \ZZ_p$ gives a linear operator, called the Sen operator, which acts on $W^{\la}$. Its eigenvalues belong to $\ZZ$ and as a set are equal to the negatives of the Hodge numbers $\{h^{i,j}\}_{i+j=n}$ of $X$.

One issue with the classical definition of $G$-locally analytic vectors is that it is available only in the setting of $\QQ_p$-coefficients, while one would want to consider spaces $W$ with $\ZZ_p$ and $\FF_p$-coefficients as well. Nevertheless, it is clear that some phenomenon of this kind exists. For example, a Sen operator in a mixed characteristic was recently  introduced in work of Bhatt-Lurie (\cite{BL22}). It is unclear how to interpret it in terms of locally analytic vectors.
In another direction, Berger and Rozensztajn introduce in 
(\cite{BR22},\cite{BR24}) the notion of super-Hölder vectors, which serve as an analogue of locally analytic vectors in characteristic $p$. 

In this article, we focus on the notion of locally analytic vectors for $W$ with coefficients in a mixed characteristic setting, namely for $\ZZ_p$-Tate algebras. This notion generalizes the classical notion and specializes to the super-Hölder vectors notion of Berger and Rozensztajn in the characteristic $p$ setting. The basic idea is to use binomial expansions rather than Taylor series expansions to define analytic functions. This idea is well known to the experts, introduced for example in \cite{Gu19}, \cite{JN19} and \cite{BR22}. One then defines a locally analytic vector to be an element $w$ of $W$ whose associated orbit map $\mathrm{orb}_{w}:G\rightarrow W$ is locally analytic.

Why use binomial expansions instead of Taylor series expansions? First, we lose nothing by doing this: in characteristic 0, it is known by the Amice theorem \cite{Am64} that locally analytic functions are the same as those with exponentially decreasing binomial expansions. Second, new analytic functions can be produced in mixed characteristic and characteristic $p$. This can be motivated by the example $R = \FF_p((X))$ with the action of $\ZZ_p$ given by $a(X)=(1+X)^a$ (as considered in \cite{BR22}). We would like to consider such a function as analytic. Writing $a\mapsto a(X) = \sum_{n \geq 0}\binom{a}{n}X^n$, we see it has a binomial expansion in $a$ with the coefficients $X^n$ tending to $0$ exponentially in the topology of $\FF_p((X)).$ On the other hand, it does not have a Taylor series expansion: indeed, in characteristic $p$, it is easy to see that Taylor series expansions only give rise to locally constant functions on $\ZZ_p^d,$ since they must factor through $\FF_p^d.$  

One nice feature of this new definition is that it allows to link locally analytic vectors in characteristic $0$ and characteristic $p$. A template for this situation is as follows: we are given a module $A$ with coefficients in a $\ZZ_p$-Tate algebra so that $\overline{A} = A/p$ is also a $\ZZ_p$-Tate algebra, together with an injection of $A[1/p]$ into a $\QQ_p$-algebra $B$. If $A$ and $B$ are endowed with $G$-actions, we have maps $A^{\la} \rightarrow B^{\la}$ and $A^{\la} \rightarrow \overline{A}^{\la}$. Now $B^{\la}$ is a $\QQ_p$-vector space and thus can be studied via $p$-adic analysis and the action of $\Lie(G).$ If the map $A^{\la} \rightarrow \overline{A}^{\la}$ is sufficiently well behaved (for example, if it is surjective), one can use the inclusion of $A^{\la}\subset B^{\la}$ to study $\overline{A}^{\la}$ and thus study the analytic vectors in a characteristic $p$ object using $p$-adic analysis in characteristic $0$! A template example for this would be to take $A=\ZZ_p[[X^{1/p^\infty}]]\langle p/X\rangle[1/X],$ the ring of functions of a preperfectoid pseudorigid disc, $\overline{A} = \FF_p((X^{1/p^\infty}))$ and $B=\ZZ_p[[X^{1/p^\infty}]]\langle p/X,X/p\rangle[1/X]=\ZZ_p[[X^{1/p^\infty}]]\langle p/X,X/p\rangle[1/p]$ the ring of functions on a preperfectoid annulus, with the action of $\ZZ_p$ given by $a(X)=(1+X)^a.$ The key point is that topology on $B$ is both $X$-adic and $p$-adic, so it can be linked to $A$ on the one hand, and studied via $p$-adic analysis of $B^\la$ on the other. A very similar example will be studied in our first application in $\mathsection{4}.$

The main goal of this paper is to analyze the extent to which $W\mapsto W^{\la}$ acts as a decompletion. We suppose $W$ is a finite free module over a $\ZZ_p$-Tate algebra $R$ endowed with a semilinear action of a compact $p$-adic Lie group $G$. The main result of this article is the following.

\begin{theorem}
\label{thm: main_thm_intro}
Suppose that the ring $R$ satisfies the Tate-Sen axioms (TS1)-(TS4) (see $\mathsection{3.2}$). 

Then:

1. The natural map $$R \otimes_{R^{\la}} W^{\la} \rightarrow W$$ is an isomorphism.

2. If moreover the Lie algebra $\mathrm{Lie}(G)$ is abelian, the derived locally analytic vectors $\R_{\la}^i(W)$ of $W$ are $0$ for $i \geq 1$.
\end{theorem}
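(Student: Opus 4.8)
The plan is to reduce both assertions to a statement about derived locally analytic vectors and then establish that statement by a Tate--Sen style dévissage combined with an analysis of the $\Lie(\Gamma)$-action. First I would recall that $\R_{\la}^i(-)$ is the $i$-th derived functor of $W \mapsto W^{\la}$, computed by a suitable Koszul-type or continuous cochain complex; since $\Lie(\Gamma)$ is abelian of some rank $d$, the locally analytic cohomology of an analytic representation is computed by a Koszul complex on $d$ commuting operators $\nabla_1,\dots,\nabla_d \in \Lie(\Gamma)$ acting on $W^{\la}$. Thus part 2 amounts to showing that this Koszul complex is a resolution, i.e. that the operators $\nabla_i$ act in a sufficiently ``nondegenerate'' way on $W^{\la}$, and part 1 (the descent / decompletion statement) will follow from the vanishing together with the base change $R \otimes_{R^{\la}} (-)$ being exact on the relevant category.

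The key steps, in order, would be: (a) Use the Tate--Sen axioms (TS1)--(TS4) for $R$ to produce, for $W$ finite free over $R$ with its semilinear $\Gamma$-action, a ``Tate--Sen descent'': after passing to a small enough open subgroup $\Gamma_n \subset \Gamma$, one can find a $\Gamma_n$-stable finite free $R_n$-lattice $D_n \subset W$ (here $R_n = R^{\Gamma_n}$ or the analogous decompletion level) on which $\Gamma_n$ acts through a matrix that is close to the identity, so that the action is ``trivializable'' up to controlled error. This is the standard mechanism by which the Tate--Sen formalism produces a $\Gamma$-representation defined over a smaller ring. (b) Transport this to locally analytic vectors: show that $W^{\la} = R^{\la} \otimes_{R_n} D_n$ (or the appropriate statement), using that the binomial-expansion definition of locally analytic vectors is compatible with the almost-trivial action on $D_n$ — essentially because a matrix sufficiently close to $\Id$ has a locally analytic logarithm in this mixed-characteristic sense. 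This gives part 1 once one knows the analogous statement for $R$ itself, i.e. $R^{\la}$ is ``big enough'', which should be built into or deducible from (TS1)--(TS4). (c) For part 2, feed the description from (b) into the Koszul complex: the derived locally analytic vectors of $W$ become the Koszul cohomology of $\Lie(\Gamma)$ acting on $R^{\la} \otimes_{R_n} D_n$, and since the action on $D_n$ is a twist of the trivial action by something unipotent-close-to-identity, one reduces to the vanishing of higher Koszul cohomology of $\Lie(\Gamma)$ on $R^{\la}$ itself — which is exactly the content of the main theorem of the paper applied to the rank-one module $R$, or is one of the Tate--Sen-type inputs. Abelianness of $\Lie(\Gamma)$ is essential here to even write down the Koszul complex on commuting operators and to control it.

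I would expect the main obstacle to be step (b): making precise the claim that the mixed-characteristic (binomial) notion of locally analytic vectors interacts well with a semilinear action that is only ``$p$-adically close'' to trivial. In the classical $\QQ_p$-setting one uses that $\log$ converges on $1 + p^k \Mat_d(\mathcal{O})$ and that Taylor series and group action match up; here one must instead track congruences modulo powers of $p$ and show that the binomial coefficients $\binom{x}{n}$-type expansions converge with the right radius, uniformly as one reduces mod $p$. In particular one must verify that taking $A^{\la}$ commutes, up to the controlled error, with reduction mod $p$ — the ``sufficiently well behaved'' hypothesis alluded to in the introduction — and that it is compatible with the finite free base change $R \otimes_{R^{\la}} (-)$, which likely requires flatness or at least a cofinality argument on the levels $\Gamma_n$. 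A secondary difficulty is checking that the Koszul complex genuinely computes $\R_{\la}^i$ in this generality (as opposed to only computing $\Lie(\Gamma)$-cohomology of the already-analytic vectors), which needs the vanishing of $\R_{\la}^{\geq 1}$ for the ``constant'' coefficients $R$ as an input — so the logical structure is to prove the $R$-case first and bootstrap.
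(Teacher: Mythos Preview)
Your outline for part~1 is essentially the paper's argument: Tate--Sen descent produces a finite free $\Lambda_{H',n}$-module $D_{H',n}(M)$ inside $M$, one checks via the binomial expansion $\gamma^x(m)=\sum_n(\gamma-1)^n(m)\binom{x}{n}$ that $D_{H',n}(M)\subset M^{G'\hla}$, and then a base-change lemma (Proposition~2.3 of \cite{BC16}) identifies $M^{G'\hla}$ with $\widetilde\Lambda^{G'\hla}\otimes_{\Lambda_{H',n}}D_{H',n}(M)$. So step~(a) and the first half of step~(b) are on target.

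The genuine gap is in your approach to part~2. You propose computing $\R^i_{\la}(W)$ as Koszul cohomology of commuting Lie-algebra operators $\nabla_1,\dots,\nabla_d$ acting on $W^{\la}$. But in the mixed-characteristic setting of this paper there is \emph{no} Lie-algebra action: the paper's locally analytic vectors are defined via Mahler/binomial expansions precisely because one cannot differentiate the $\Gamma$-action when the pseudouniformizer is not~$p$ (see Example~\ref{example:loc_analytic_elems}.1, where the author explicitly says ``Unfortunately, this action seems to be unavailable outside this special case''). Consequently $\R^i_{\la}$ is \emph{defined} as $\varinjlim_{G_0}\mathrm H^i(G_0,\mathcal C^{\la}(G_0,M))$, continuous \emph{group} cohomology with coefficients in analytic-function spaces, and there is no Koszul complex to write down. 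Your identification of where abelianness enters --- ``to even write down the Koszul complex on commuting operators'' --- is therefore misplaced.

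What the paper actually does for part~2 is apply the Tate--Sen descent a second time, now to the finite free $\widetilde\Lambda$-modules $M_{\un n,\lambda}=\{f\in\mathcal C^{\lambda\han}(G_0,M):\Delta^{\un n}f=0\}$ that exhaust $\mathcal C^{\lambda\han}(G_0,M)$. For suitable ``$c$-small'' pairs $(G_0,\lambda)$ this descends the entire function space, and one reduces $\mathrm H^i(G_0,\mathcal C^{\lambda\han}(G_0,M))$ first to $\mathrm H^i(\Gamma_0,-)$ with $\Gamma_0\cong\ZZ_p$ (this is where abelianness is used: it gives a product splitting $G_0\cong\Gamma_0\times H_0$ and makes the $H_0$-cohomology vanish), killing $i\ge 2$ immediately, and then for $i=1$ one writes down an explicit formal primitive $F(x)=\sum_{k\ge 0}(-1)^k\sum_n(\gamma-1)^k(\gamma^{-k}(m_n))\binom{x}{n+k+1}$ which does \emph{not} converge for the original $\lambda$ but does after passing to a smaller $\lambda'$ in the direct system. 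That last convergence trick is the heart of the argument and has no analogue in a Koszul picture; it is a genuinely group-cohomological computation with the binomial basis.
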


This theorem is a generalization of Theorem C of \cite{Po24}. The basic idea of the proof is in essence to take a "fiber product" of the original method of \cite{Po24} and the mixed characteristic results of \cite{Po22b}. As Theorem C of \cite{Po24} has been applied in several subsequent articles, we hope the current result will be useful in a similar way.

Note that the vanishing of $\R_{\la}^1(W)$ is precisely the type of behavior that will allow us to propagate information regarding locally analytic vectors along reductions. Indeed, if $W$ lives over some mixed characteristic $\ZZ_p$-Tate algebra such as $\ZZ_p[[X]]\langle p/X\rangle[1/X]$, we have also the $\FF_p$-Banach space $\overline{W}$ and an exact sequence
$$0\rightarrow W \xrightarrow{p} W\rightarrow \overline{W} \rightarrow 0.$$
After passing to locally analytic vectors, the vanishing of $\R_{\la}^1(W)$ implies that $W^\la\rightarrow\overline{W}^\la$ is surjective. 

We give three applications of our results. Our first result is a new proof of the main result of \cite{BR22}. There, Berger and Rozensztajn show that for the action of $G = \ZZ_p^\times$ on the $X$-adic completion of $\cup_{n} \FF_p((X^{1/p^n}))$, taking $G$-locally analytic vectors undoes the $X$-adic completion. We show how to deduce this using characteristic 0 methods, as sketched above. Our second application is the introduction of an integral multivariable ring $\widetilde{\mathbf{A}}_{\LT}^{\dagger,\la}$ in the Lubin-Tate setting, together with an appropriate overconvergence result for $(\varphi,\Gamma)$-modules. Previously, such a ring was only available rationally with pro-analytic vectors (as in \cite{Be16}). Our third and final result is a surprising description of the classical Cohen ring ${\bf{A}}_{\QQ_p}$ appearing in the theory of $(\varphi,\Gamma)$-modules in terms of locally analytic vectors. We show how to endow the ring $\wt{\bf{A}}_{\QQ_p}$ of Witt vectors of $\widehat{{\bf{\QQ}}}_p^\cyc$ with the structure of a mixed-characteristic Fréchet space, and its ring of locally analytic vectors is $\varphi^{-\infty}({\bf{A}}_{\QQ_p}).$ This reveals an analogy between $\varphi^{-\infty}({\bf{A}}_{\QQ_p})$ and the ring $\cup_n \QQ_p(\zeta_{p^n})[[t]]$ of locally analytic elements in $\bf{B}_{\mathrm{dR}}^+$, and suggests a way to give a lift of the field of norms beyond the Lubin-Tate setting, a question studied by several authors and motivated by Iwasawa theory (see for instance \cite{Be14}, \cite{Poy22}).

\subsection{Structure of the article}
In $\mathsection{2}$ we give the definitions and basic results regarding locally analytic vectors and functions in mixed characteristic. In $\mathsection{3}$ we prove the main result of the article. The reader may desire to skip this section if they are only interested in using the main result. Finally, in $\mathsection{4}$ we give three applications of our methods.

\subsection{Notations and conventions}

\subsubsection{Valuations}
By a valuation on a ring $R$, we mean a map $\val_R:R \rightarrow (-\infty, \infty] $ satisfying
the following properties for $x, y \in R$:

(1) $\val_R(x) = \infty$ if and only if $x = 0$ (i.e. $R$ is separated);

(2) $\val_R(xy) \geq \val_R(x) + \val_R(y)$;

(3) $\val_R(x+y) \geq \min(\val_R(x), \val_R(y))$.

This definition naturally extends to valuations
on an $R$-module $M$. We set $M^+ := M^{\val_M \geq 0}.$ If $f:X\rightarrow M$ is a function, we set $\val_M^{\op}(f):=\inf_{x\in X}\val_M(f(x))$ for the operator valuation. 

\subsubsection{$\ZZ_p$-Tate algebras}
A $\ZZ_p$-Tate algebra is a Tate ring (see Definition 2.2.5 of \cite{SW20}) which is a $\ZZ_p$-algebra, such that the map $\ZZ_p\rightarrow R$ is continuous. We can always endow such a ring with a valuation $\val_R$ by taking some topologically nilpotent unit $\varpi \in R$ and letting $\val_R(x)=\inf_n\{\varpi^nx\in R^+\}$ for some open subring $R^+\subset R$. However, it sometimes happens that there is a more natural valuation inducing the same topology which does not arise in this fashion, such as the $X$-adic valuation on $\FF_p((X^{1/p^\infty}))$. In any case, whenever $R$ is a $\ZZ_p$-Tate algebra endowed with a valuation $\val_R$, we shall always assume:

(1) There is a topologically nilpotent unit $\varpi \in R$ such that for any $R$-module endowed with a valuation we have $\val_M(\varpi x)=\val_R(\varpi)+\val_M(x)$ for $x\in M$. In particular, we always have $\val_R(1) = 0$, and

(2) We have the inequality\footnote{This condition is not automatic for a $\ZZ_p$-Tate algebra, as can be seen in the example  $R=\ZZ_p[[\varpi]]\langle{p^2/\varpi}\rangle[1/\varpi]$.} $\val_R(p) > 0$. 

\subsubsection{Miscellaneous}

 Given $\un{n} \in \ZZ^d$ we write $|\un{n}|_\infty=\max_{1 \leq i \leq d} |n_i|$ and $|\un{n}|=\sum_{1\leq i \leq d}|n_i|$.  For a real number $a$ we denote by $\lfloor a \rfloor$ the largest integer which is smaller or equal to $a$. Finally, for ${\un{x}} \in \RR^d$ we let $\lfloor \un{x} \rfloor=\sum_{i=1}^d \lfloor x_i \rfloor.$

\subsection{Acknowledgements}
I would like to thank Chengyang Bao, Laurent Berger, Hui Gao, Léo Poyeton and the anonymous referee for their helpful comments. This work was supported by the European Research Council (ERC 101078157).
\section{Locally analytic functions and vectors in mixed characteristic}
In this section we introduce the spaces of functions and analytic vectors that will appear in this article. 

\subsection{Locally analytic functions on $\ZZ_p^d$}

Let $R$ be a $\ZZ_p$-Tate algebra with valuation $\val_R$ inducing its topology (recall our conventions regarding valuations in $\mathsection{1.2}$). Let $M$ be an $R$-module endowed with a compatible valuation $\val_M$.

In particular, when $M = R$, we have the function from $\ZZ_p^d$ to $R$ given by $$\un{x}=(x_1,...,x_d) \mapsto \binom{\un{x}}{\un{n}} := \binom{x_1}{n_1}\cdot ... \cdot \binom{x_d}{n_d}.$$

Given a function $f:\ZZ_p^d \rightarrow M$ and an element $\un{y} \in \ZZ_p^d$, we write $\Delta_{\un{y}}(f):\ZZ_p^d \rightarrow M$ for the function given by $\Delta_{\un{y}}(f)(\un{x}) = f(\un{x}+\un{y})-f(\un{x})$. The operators $\Delta_{\un{y}}$ and $\Delta_{\un{z}}$ are commuting for $\un{y},\un{z}\in \ZZ_p^d$. We set $\Delta^n_{\un{y}}(f)$ for the $n$'th application of $\Delta_{\un{y}}$ to $f$. Given a $d$-dimensional vector $\un{n}$, we write $\Delta^{\un{n}}=\Delta_{1}^{n_1}\circ...\circ \Delta_{d}^{n_d}$, where $\Delta_{k} = \Delta_{1_k}$ for $1_k \in \ZZ_p^d$ being 1 on the $k$'th copy of $\ZZ_p$ and $0$ elsewhere. Setting $(-1)^{\un{m}} = (-1)^{m_1+...+m_d}$, it can be checked that
$$\Delta^{\un{n}}(f)(\un{x}) = \sum_{\un{i}=\un{0}}^{\un{n}} (-1)^{\un{n}+\un{i}} \binom{\un{n}}{\un{i}} f(\un{i}+\un{x})$$

and that for $\un{m} \leq \un{n}$ one has $\Delta^{\un{m}}(\binom{\un{x}}{\un{n}}) = \binom{\un{x}}{\un{n}-\un{m}}.$

For this section it will also be important to introduce for $\un{z} \in \ZZ_p^d$ the shift operator $\sh_{\un{z}}$ given by $\sh_{\un{z}}(f)(\un{x}) = f(\un{z}+\un{x})$. It commutes with the $\Delta_{\un{y}}$, and we have the identity 
\begin{align}
\label{identity:delta_sum}
    \Delta_{\un{x}+\un{y}}(f) = (\sh_{\un{y}}\circ\Delta_{\un{x}})(f) + \Delta_{\un{y}}(f).
\end{align} Recall $\mathsection{1.2}$ for the notation $\val_M^{\op}.$

\begin{lemma}
\label{lem:val_bound_gen_elem}
Let $\un{y} \in \ZZ_p^d$. Then for every $f:\ZZ_p^d \rightarrow M$ we have $$\val_{M}^{\op}({\Delta_{\un{y}}^n}(f)) \geq \min_{1\leq i \leq d}(\val_{M}^{\op}(\Delta_i^{\lceil n/d \rceil}(f))).$$ 
\end{lemma}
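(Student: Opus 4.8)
The plan is to rewrite $\Delta_{\un y}^n$ as a finite sum of operators, each of which factors through some $\Delta_i^{\lceil n/d\rceil}$ with an "operator‑norm $\le 1$" remaining factor, and then to conclude by the ultrametric inequality together with the observation that any operator built as a $\ZZ_p$‑linear combination of shift operators $\sh_{\un z}$ does not decrease the operator valuation (this uses that $\val_R\ge 0$ on $\ZZ_p$, which is part of our normalization since $\val_R(p)>0$). Note also that all shift operators commute, hence so do all such combinations and all the $\Delta_i$.

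The algebraic heart of the argument is as follows. Write $T_k:=\sh_{y_k 1_k}$, so that $\sh_{\un y}=T_1\cdots T_d$ and $\Delta_{\un y}=\sh_{\un y}-\Id$. The telescoping identity $T_1\cdots T_d-\Id=\sum_{k=1}^d\bigl(\prod_{j<k}T_j\bigr)(T_k-\Id)$ reduces matters to the one–variable factorization $T_k-\Id=\Delta_k\circ U_k$, where $U_k:=\sum_{m\ge 0}\binom{y_k}{m+1}\Delta_k^m$: indeed $T_k=\sh_{1_k}^{y_k}=(\Id+\Delta_k)^{y_k}$ and $\binom{y_k}{m+1}\in\ZZ_p$. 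Granting this, $\Delta_{\un y}=\sum_{k=1}^d O_k\circ\Delta_k$ with each $O_k=\bigl(\prod_{j<k}T_j\bigr)\circ U_k$ a $\ZZ_p$‑coefficient combination of shift operators, hence operator–valuation non‑decreasing and commuting with everything. The multinomial theorem (legitimate by commutativity) then gives
\[
\Delta_{\un y}^n=\sum_{|\un m|=n}\binom{n}{\un m}\Bigl(\prod_{k=1}^d O_k^{m_k}\Bigr)\circ\Delta_1^{m_1}\circ\cdots\circ\Delta_d^{m_d}.
\]
For each $\un m$ with $|\un m|=n$, the pigeonhole principle yields an index $j=j(\un m)$ with $m_j\ge\lceil n/d\rceil$ (otherwise $n=|\un m|\le d(\lceil n/d\rceil-1)<n$), so $\Delta_1^{m_1}\circ\cdots\circ\Delta_d^{m_d}=P_{\un m}\circ\Delta_j^{\lceil n/d\rceil}$ with $P_{\un m}=\Delta_j^{m_j-\lceil n/d\rceil}\circ\prod_{k\ne j}\Delta_k^{m_k}$ a $\ZZ$‑combination of shifts. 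Collecting the terms according to $j(\un m)$, we obtain $\Delta_{\un y}^n=\sum_{i=1}^d\mu_i\circ\Delta_i^{\lceil n/d\rceil}$ for operators $\mu_i$ that are ($\ZZ_p$‑coefficient) combinations of shift operators.

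Applying this identity to $f$ and using the ultrametric inequality and the non‑decrease of $\val_M^{\op}$ under each $\mu_i$ gives
\[
\val_M^{\op}\bigl(\Delta_{\un y}^n(f)\bigr)\ge\min_{1\le i\le d}\val_M^{\op}\bigl(\mu_i(\Delta_i^{\lceil n/d\rceil}(f))\bigr)\ge\min_{1\le i\le d}\val_M^{\op}\bigl(\Delta_i^{\lceil n/d\rceil}(f)\bigr),
\]
which is the assertion.

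The main obstacle is the one–variable factorization $T_k-\Id=\Delta_k\circ U_k$ when $y_k\in\ZZ_p\setminus\ZZ$: for $y_k\in\ZZ$ this is the elementary finite identity $\sh_{1_k}^{y_k}-\Id=(\sh_{1_k}-\Id)(\sh_{1_k}^{y_k-1}+\cdots+\Id)$, but in general the series $U_k=\sum_m\binom{y_k}{m+1}\Delta_k^m$ is genuinely infinite, so one must make sense of it as an operator and verify that it (hence every $\mu_i$) does not decrease the operator valuation. For the functions at issue here this is Newton's forward–difference formula: since $\Delta_k^m(f)\to 0$ (Mahler), the series converges uniformly and $U_k$ is a limit of $\ZZ_p$‑coefficient combinations of shifts; equivalently, one works in the completed group algebra $\ZZ_p[[\ZZ_p^d]]\cong\ZZ_p[[T_1,\dots,T_d]]$, where $[\un y]-1\in(T_1,\dots,T_d)$, hence $([\un y]-1)^n\in(T_1,\dots,T_d)^n\subseteq(T_1^{\lceil n/d\rceil},\dots,T_d^{\lceil n/d\rceil})$, and which acts on such functions through operators of operator valuation $\ge 0$. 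Alternatively one can avoid the infinite series entirely, using uniform continuity of $f$ on the compact $\ZZ_p^d$ and density of $\ZZ^d$ to reduce to $\un y\in\ZZ^d$, where only the finite telescoping identity is needed.
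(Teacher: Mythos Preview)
Your argument is correct and follows the same strategy as the paper: write $\Delta_{\un y}$ as a sum of shifts composed with the basic differences $\Delta_k$, expand the $n$th power, and apply pigeonhole to extract a factor $\Delta_i^{\lceil n/d\rceil}$ from each term. The paper takes exactly the shortcut you describe in your final sentence---reducing to $\un y\in\ZZ^d$ by continuity so that only the finite telescoping identity is needed---rather than working directly with the infinite binomial series $U_k$ or the Iwasawa algebra.
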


\begin{proof}
Using continuity, we may restrict to the case $\un{y} \in \ZZ^d_{\geq 0}$. By writing $\un{y}$ as a sum of $1_k$'s and applying the identity (\ref{identity:delta_sum}) inductively, we may write $$\Delta_{\un{y}} = \sum_j{a_j \sh_{\un{z}_j}} \Delta_{i_j}$$ for some $a_j \in \ZZ$, $\un{z}_j \in \ZZ^d$ and $1 \leq i_j \leq d$ (the $i_j$ may repeat). Raising this to the $n$'th power, we obtain a writing of $\Delta_{\un{y}}$ as a $\ZZ$-linear combination up to shifts of $\Delta^{\un{m}}$'s where for each $\un{m}$ we have $\sum{m_i}=n$. For each such $\un{m}$, choose the maximal coordinate $m_i$. For every $g:\ZZ_p \rightarrow M$ one has $\val_M^{\op}(\Delta^{\un{m}}(g))\geq\val_M^{\op}(\Delta_i^{m_i}(g))$; applying this observation to shifts of $f$ and using that $m_i\geq \lceil n/d \rceil$, this concludes the proof.
\end{proof}

We write $\mathcal{C}^0(\ZZ_p^d,M)$ for the continuous functions from $\ZZ_p$ to $M$. The following version of Mahler's theorem shows each continuous function has an expansion in terms of the $\binom{\un{x}}{\un{n}}$.

\begin{theorem}
\label{thm:Mahler}
The following are equivalent for a function $f:\ZZ_p^d \rightarrow M$.

1. We have $f \in \mathcal{C}^0(\ZZ_p^d,M).$

2. The function $f$ has an expansion of the form $f(\un{x}) = \sum_{\un{n}\in\ZZ_{\geq 0}^d} a_{\un{n}}(f) \binom{\un{x}}{\un{n}}$ with $\val_M(a_{\un{n}}(f)) \rightarrow \infty$ as $|\un{n}| \rightarrow \infty$.

Furthermore, under these equivalent conditions we have $a_{\un{n}} = \Delta^{\un{n}}(f)(\un{0})$ and $\val_M^{\op}(f) = \inf_{\un{n}\in \ZZ_{\geq 0}}\val_M(a_{\un{n}}). $

\end{theorem}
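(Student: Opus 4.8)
The plan is to mimic the classical proof of Mahler's theorem, working with the finite-difference operators $\Delta^{\un n}$ and exploiting the completeness of $M$ together with the valuation bound from \Cref{lem:val_bound_gen_elem}. First I would handle the easy implication $(2)\Rightarrow(1)$: if $f(\un x)=\sum_{\un n}a_{\un n}\binom{\un x}{\un n}$ with $\val_M(a_{\un n})\to\infty$, then since each $\binom{\un x}{\un n}$ is a continuous $\ZZ_p$-valued function (a product of one-variable binomial coefficients, each continuous by the classical one-variable statement) and has operator valuation $\geq 0$, the partial sums converge uniformly to $f$; a uniform limit of continuous functions into the separated complete module $M$ is continuous, giving $f\in\mathcal C^0(\ZZ_p^d,M)$. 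At the same time this shows $\val_M^{\op}(f)\geq\inf_{\un n}\val_M(a_{\un n})$.

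For the harder implication $(1)\Rightarrow(2)$, given $f\in\mathcal C^0(\ZZ_p^d,M)$ I would set $a_{\un n}:=\Delta^{\un n}(f)(\un 0)$ and argue in two steps. The key quantitative input is that $\val_M(a_{\un n})\to\infty$ as $|\un n|\to\infty$. Here is where \Cref{lem:val_bound_gen_elem} does the work: uniform continuity of $f$ on the compact group $\ZZ_p^d$ gives, for any target bound $C$, some $k$ with $\val_M^{\op}(\Delta_i^{p^k}(g))\geq C$ for every shift $g$ of $f$ and every $i$ — this reduces to the one-variable fact that high iterated differences $\Delta_i^{p^k}$ can be written (via the identity $\Delta^{\un n}(f)(\un x)=\sum(-1)^{\un n+\un i}\binom{\un n}{\un i}f(\un i+\un x)$ with $\un n=p^k\cdot 1_i$, whose binomial coefficients $\binom{p^k}{j}$ are highly divisible by $p$ for $0<j<p^k$) as $p$-divisible combinations of translates, so uniform continuity forces their values to be small. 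Then \Cref{lem:val_bound_gen_elem} applied with $\un y=1_i$ replaced appropriately — more precisely, for $|\un n|_\infty\geq p^k$ at least one coordinate $n_i\geq p^k$, and the identity $\Delta^{\un m}(\binom{\un x}{\un n})=\binom{\un x}{\un n-\un m}$ lets one peel off $\Delta_i^{p^k}$ from $\Delta^{\un n}$ — shows $\val_M(a_{\un n})\geq C$ for all such $\un n$. Hence $\val_M(a_{\un n})\to\infty$.

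Having established convergence, I would show $f=\sum_{\un n}a_{\un n}\binom{\un x}{\un n}$ by considering $g:=f-\sum_{\un n}a_{\un n}\binom{\un x}{\un n}$, which by the previous paragraph and $(2)\Rightarrow(1)$ is a well-defined continuous function; using $\Delta^{\un m}(\binom{\un x}{\un n})=\binom{\un x}{\un n-\un m}$ and $\binom{\un 0}{\un n}=\delta_{\un n,\un 0}$ one computes $\Delta^{\un m}(g)(\un 0)=a_{\un m}-a_{\un m}=0$ for all $\un m$, and then an induction on $|\un m|$ together with the Mahler expansion of $g$ itself (applied to the values $g(\un x)$ for $\un x\in\ZZ_{\geq 0}^d$, which are determined by the $\Delta^{\un m}(g)(\un 0)$ via the finite-difference inversion formula) forces $g$ to vanish on the dense subset $\ZZ_{\geq0}^d$, hence everywhere by continuity. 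Finally, for the valuation formula, one direction was noted above; for the reverse, $\val_M(a_{\un n})=\val_M(\Delta^{\un n}(f)(\un 0))\geq\val_M^{\op}(\Delta^{\un n}(f))\geq\val_M^{\op}(f)$ since $\Delta^{\un n}$ is built from translations and subtractions, each of which does not decrease the operator valuation; taking the infimum over $\un n$ gives $\inf_{\un n}\val_M(a_{\un n})\geq\val_M^{\op}(f)$, and combined with the earlier inequality this yields equality.

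The main obstacle I anticipate is the quantitative step showing $\val_M(a_{\un n})\to\infty$: one must carefully combine the multi-variable combinatorics of $\Delta^{\un n}$ with \Cref{lem:val_bound_gen_elem} and with $p$-adic estimates on binomial coefficients $\binom{p^k}{j}$, and be careful that the argument uses only uniform continuity (automatic on the compact $\ZZ_p^d$) rather than any stronger regularity. Everything else is a routine transcription of the classical one-variable Mahler argument, with sums over $\ZZ_{\geq 0}$ replaced by sums over $\ZZ_{\geq 0}^d$ and absolute values replaced by $\val_M$.
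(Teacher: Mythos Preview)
Your outline is essentially Bojanic's argument, which is exactly what the paper does: $(2)\Rightarrow(1)$ by uniform convergence, the valuation formula from the two obvious inequalities, and $(1)\Rightarrow(2)$ by setting $a_{\un n}=\Delta^{\un n}(f)(\un 0)$, proving these tend to $\infty$, and then identifying $f$ with $\sum a_{\un n}\binom{\un x}{\un n}$ on the dense set $\ZZ_{\geq 0}^d$.

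However, your justification of the key quantitative claim has a gap. You assert that for any target $C$ there is $k$ with $\val_M^{\op}(\Delta_i^{p^k}(g))\geq C$, on the grounds that the middle coefficients $\binom{p^k}{j}$ are ``highly divisible by $p$''. They are not uniformly so: for example $v_p\binom{p^k}{p^{k-1}}=1$ for every $k\geq 1$. A single expansion of $\Delta_i^{p^k}(g)$ therefore only gives
\[
\val_M^{\op}(\Delta_i^{p^k}(g))\;\geq\;\min\bigl(C,\ \val(p)+\val_M^{\op}(g)\bigr),
\]
where $C$ is the uniform-continuity bound controlling the end terms $g(\un x+p^k1_i)\pm g(\un x)$. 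To reach an arbitrary target one must iterate: fix $t$ once from uniform continuity, note that the same min-inequality holds with $g$ replaced by any $\Delta^{\un m}(f)$ (these are $\ZZ$-combinations of shifts of $f$ and inherit the same modulus of continuity), and apply $\Delta_i^{p^t}$ a total of $s$ times to obtain $\val_M^{\op}(\Delta_i^{sp^t}(f))\geq\min(C,\ s\val(p)+\val_M^{\op}(f))$. This bootstrap is precisely the paper's inequality $(*)$ applied consecutively; once it is in place, your peeling argument (factor $\Delta^{\un n}$ through $\Delta_i^{sp^t}$ when $n_i\geq sp^t$) goes through.

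Two minor points. Your appeal to \Cref{lem:val_bound_gen_elem} is unnecessary: the proof only needs coordinate-direction differences, and ``peeling off'' is just commutativity of the $\Delta_i$; the paper does not invoke that lemma here. And for the identification step it is enough to observe directly that $f$ and $\sum a_{\un n}\binom{\un x}{\un n}$ agree on $\ZZ_{\geq 0}^d$ (immediate from the finite-difference inversion), hence everywhere by continuity.
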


\begin{proof}
We follow the proof of Bojanic \cite{Bo74}, see also Theorem 1.13 of \cite{BR22}.

First, suppose $f$ has an expansion of the form $f(\un{x}) = \sum{a_{\underline{n}}}\binom{x}{\underline{n}}$ with $a_{\underline{n}} \rightarrow 0$. Then $f$ is continuous since it is the uniform limit of continuous functions. We now explain why do we have in this case the equality $$\val_M^{\op}(f) = \inf_{\un{n}\in \ZZ_{\geq 0}}\val_M(a_{\un{n}}).$$

Indeed, for each $\un{n}$, the element $a_{\un{n}}=\Delta^{\un{n}}(f)(\un{0})$ is a sum of elements in $\mathrm{Im}(f)$, which shows $\inf_{\un{x}}\val_M(f(\un{x})) \leq \inf_{\un{n}}\val_M(a_{\un{n}}).$ The inequality in other direction is clear given the existence of the expansion.

Conversely, suppose $f$ is continuous. We set $a_{\un{n}} = \Delta^{\un{n}}(f)(0)$. It suffices to show that $a_{\underline{n}} \rightarrow 0$; indeed, with that given, the function $f$ and the one given by $g(\un{x}):=\sum_{\un{n}}a_{\underline{n}}\binom{x}{\underline{n}}$ agree on $\ZZ_{\geq 0}^d$, hence are equal.

Since $\ZZ_p^d$ is compact, the function $f$ is uniformly continuous.  Fix $s \geq 0$; then there exists $t \geq 0$ such that if $\val(\un{x}-\un{y}) \geq t$ then $\val_M(f(\un{x})-f(\un{y})) \geq s\val(p) + \val_{M}^{\op}(f)$. The same statement, with the same $t$, holds for any $\Delta^{\underline{n}}(f)$. To conclude the proof, it suffices to show that if $|\underline{n}|_{\infty}$ is larger than $sp^t$ then $\val(a_{\un{n}})\geq s\val(p)+\val_M^{\op}(f)$.

Now given a function $g$ we have $$\Delta^{p^t1_k}(g)(\un{x}) = g(\un{x}+p^t1_i)-g(\un{x}) + p\cdot y(\un{x})$$ for some element $y(\un{x})$ with $\val_M(y(\un{x})) \geq \val_M^{\op}(g)$.
Taking $g$ to be a function of the form $\Delta^{\underline{n}}(f)$, we obtain $\val(g(\un{x}+p^t1_i)-g(\un{x})) \geq s\val(p) + \val_{M}^{\op}(f)$ and so for every $\un{n}$ and $k$ we have the inequality 
\begin{align}
\label{ineq:val_op_low_bound}\text{ }\val_{M}^{\op}(\Delta^{p^t1_k}(\Delta^{\underline{n}}(f))) \geq \min(s\val(p) + \val_{M}^{\op}(f), \val(p) + \val_M^{\op}(\Delta^{\underline{n}}(f))).
\end{align}

Now if $|\underline{n}|_{\infty} > sp^t$ then the $k$'th coordinate of $\un{n}$ is larger than $sp^t$ for some $k$. Applying (\ref{ineq:val_op_low_bound}) consecutively to $$\Delta^{\underline{n}-sp^t1_k}(f),\Delta^{\underline{n}-(s-1)p^t1_k}(f),...,\Delta^{\underline{n}-p^t 1_k}(f)$$ shows $\val_{M}^{\op}(\Delta^{\underline{n}}(f)),$ hence also $a_{\un{n}}(f)$, has valuation at least $s\val(p) + \val_{M}^{\op}(f)$. This finishes the proof. 
\end{proof}

We now give several definitions for analytic functions on $\ZZ_p^d$ and explain the relationships between them. See also \cite{BR22}, \cite{BR24}, $\mathsection{3}$ of \cite{Gu19} and $\mathsection{3.2}$ of \cite{JN19} for similar definitions.

\begin{proposition}
\label{prop:equiv_conds_analytic_func}
Let $\lambda, \mu \in \RR$. The following are equivalent for a function $f:\ZZ_p^d \rightarrow M$.

1. The inequality $\val_M(a_{\un{n}}(f)) \geq p^\lambda \cdot p^{\lfloor \log_p(|\un{n}|_\infty) \rfloor} + \mu$ holds for every $\un{n} \in \ZZ^d_{\geq 0}$.

2. The inequality $\val_M^{\op}(\Delta^{n}_i(f)) \geq p^\lambda \cdot p^{\lfloor \log_p(n) \rfloor} + \mu$ holds for every $1\leq i \leq d$ and $n \in \ZZ_{\geq 0}.$
\end{proposition}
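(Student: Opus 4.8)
The plan is to reduce everything to two outputs of Theorem \ref{thm:Mahler}: the formula $a_{\un n}(f) = \Delta^{\un n}(f)(\un 0)$, and the identity $\val_M^{\op}(g) = \inf_{\un n}\val_M(a_{\un n}(g))$ valid for continuous $g$. The bridge between conditions 1 and 2 is a single bookkeeping identity. Applying $\Delta^{\un m}$ termwise to a Mahler expansion $f(\un x) = \sum_{\un k} a_{\un k}(f)\binom{\un x}{\un k}$ and using $\Delta^{\un m}\binom{\un x}{\un k} = \binom{\un x}{\un k - \un m}$ (with the convention that a binomial coefficient with a negative argument is $0$), one finds that $\Delta_i^n(f)$ is again continuous and has Mahler coefficients $a_{\un m}(\Delta_i^n(f)) = a_{\un m + n 1_i}(f)$; the termwise application is legitimate since $\Delta_i^n$ is a fixed finite $\ZZ$-linear combination of (isometric) shift operators. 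At the level of raw values, and without assuming continuity, the same thing reads $\Delta^{\un m}\bigl(\Delta_i^n(f)\bigr)(\un 0) = \Delta^{\un m + n 1_i}(f)(\un 0)$, because the operators $\Delta_1,\dots,\Delta_d$ commute.

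For $2 \Rightarrow 1$ I would work with the quantities $a_{\un n} := \Delta^{\un n}(f)(\un 0)$, which make sense for an arbitrary function. Given $\un n \neq \un 0$, pick a coordinate $i$ with $n_i = |\un n|_\infty$ and write $\un n = \un m + n_i 1_i$; then $a_{\un n} = \Delta^{\un m}\bigl(\Delta_i^{n_i}(f)\bigr)(\un 0)$ is a $\ZZ$-linear combination of values of $\Delta_i^{n_i}(f)$, hence $\val_M(a_{\un n}) \geq \val_M^{\op}(\Delta_i^{n_i}(f)) \geq p^\lambda p^{\lfloor \log_p n_i\rfloor} + \mu = p^\lambda p^{\lfloor \log_p |\un n|_\infty\rfloor} + \mu$ by hypothesis 2; for $\un n = \un 0$, hypothesis 2 at $n=0$ reads $\val_M^{\op}(f)\geq\mu$ (using the convention $\lfloor\log_p 0\rfloor = -\infty$). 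Since $|\un n|_\infty\to\infty$ drives this bound to $\infty$, the $a_{\un n}$ tend to $0$, so Theorem \ref{thm:Mahler} tells us $f$ is continuous and the $a_{\un n}$ are its genuine Mahler coefficients; that is precisely condition 1.

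For $1 \Rightarrow 2$, condition 1 already forces $a_{\un n}(f)\to 0$, hence $f$ is continuous with the stated expansion, and by the bridge identity $\Delta_i^n(f)$ is continuous with Mahler coefficients $a_{\un m + n 1_i}(f)$. The last clause of Theorem \ref{thm:Mahler} then gives $\val_M^{\op}(\Delta_i^n(f)) = \inf_{\un m \geq \un 0}\val_M(a_{\un m + n 1_i}(f))$, and since $|\un m + n 1_i|_\infty \geq m_i + n \geq n$, each term is bounded below, via hypothesis 1, by $p^\lambda p^{\lfloor\log_p(m_i + n)\rfloor} + \mu \geq p^\lambda p^{\lfloor\log_p n\rfloor} + \mu$ for $n\geq 1$; for $n = 0$ one reads $\val_M^{\op}(f)\geq\mu$ straight off condition 1 since $p^\lambda p^{\lfloor\log_p k\rfloor}\geq 0$. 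This is condition 2.

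I do not anticipate a real obstacle. The only points needing care are (a) handling the degenerate indices $\un n = \un 0$ and $n = 0$ along with the convention for $\log_p 0$, and (b) in the direction $2\Rightarrow 1$, resisting the urge to assume $f$ continuous from the outset — which is why I route that argument through $\Delta^{\un n}(f)(\un 0)$, a quantity available for any function, and only deduce continuity at the end.
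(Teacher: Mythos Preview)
Your proposal is correct and follows essentially the same approach as the paper: both directions rest on the commutation identity $\Delta^{\un m}\bigl(\Delta_i^{n}(f)\bigr)(\un 0)=a_{\un m+n1_i}(f)$ together with the Mahler-theorem equality $\val_M^{\op}(g)=\inf_{\un n}\val_M(a_{\un n}(g))$. The only cosmetic differences are that the paper reduces $1\Rightarrow 2$ to the case $n=p^k$ before expanding (which you correctly observe is unnecessary), and that you are more explicit about the edge cases $\un n=\un 0$, $n=0$ and about not presupposing continuity in the direction $2\Rightarrow 1$.
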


\begin{proof}
To show 1 implies 2, we may assume $n=p^k$. Write $f(\un{x}) =\sum a_{\un{n}} \binom{\un{x}}{\un{n}}$ (as we may according to \ref{thm:Mahler}), then
$$\Delta_i^{p^k}(f)(\un{x}) = \sum a_{\un{n}} \Delta_i^{p^k}(\binom{\un{x}}{\un{n}}) 
 = \sum_{\un{n}:n_i\geq p^k} a_{\un{n}}\binom{\un{x}}{\un{n}-1_ip^k},$$
 for which each term has $\val\geq p^\lambda \cdot p^k + \mu$ by assumption.

 Conversely, choose $i$ with $n_i = |\un{n}|_\infty$. Let $\un{n}'$ be $\un{n}$ with the $i$'th coordinate removed. Then
 $$a_{\un{n}}(f) = \Delta^{\un{n}}(f)(\un{0}) =(\Delta_i^{n_i}(\Delta^{\un{n}'}(f))(\un{0})).$$

 As $\Delta^{\un{n}'}(f)$ is a $\ZZ$-linear combination of shifts of $f$, we obtain that $a_{\un{n}}(f)$ is a $\ZZ$-linear combination of $\Delta_i^{n_i}(f)(t_j)$ for various $t_j$. Each of these has valuation $\geq p^{\lfloor \log_p(n_i) \rfloor} + \mu=p^{\lfloor \log_p(|\un{n}|_\infty) \rfloor} + \mu,$ as required.
\end{proof}

\begin{definition} 
\label{def:analytic_funcs}
1. Let $\mathcal{C}^{\an\text{-}\lambda,\mu}(\ZZ_p^d,M)$ be the submodule of functions satisfying any of the equivalent conditions of Proposition \ref{prop:equiv_conds_analytic_func}. By condition 1 and Theorem \ref{thm:Mahler}, it is contained in $\mathcal{C}^{0}(\ZZ_p^d,M)$.

2. Let $\mathcal{C}^{\lambda\text{-}\an}(\ZZ_p^d,M)\subset \mathcal{C}^0(\ZZ_p^d,M)$ be the submodule of functions $f$ satisfying $\val_M(a_{\un{n}}(f))-p^\lambda|\un{n}|\rightarrow \infty$. We endow it with the valuation $$\val_{\lambda}(f) := \inf_{\un{n}}(a_{\un{n}}(f)-\lfloor p^\lambda \un{n} \rfloor)$$ (recall $\mathsection{1.2}$ for the notation $\lfloor p^\lambda \un{n} \rfloor$.)
\end{definition}

\begin{proposition}
\label{prop:shifts_action} The modules $\mathcal{C}^{\an\text{-}\lambda,\mu}(\ZZ_p^d,M)$ and $\mathcal{C}^{\lambda\text{-}\an}(\ZZ_p^d,M)$ are stable under the shift operator $\sh_{\un{z}}$ for any $\un{z}\in \ZZ_p^d$.

Furthermore, the action of $\ZZ_p^d$ on $\mathcal{C}^{\lambda\text{-}\an}(\ZZ_p^d,M)$ given by shifts is continuous and each element of $\ZZ_p^d$ acts as an isometry.
\end{proposition}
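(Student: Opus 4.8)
The key point is that the coefficients $a_{\un{n}}(f)$ of a function and those of its shift $\sh_{\un{z}}(f)$ are related by an explicit "change of basis" for the binomial polynomials. Concretely, since $\Delta^{\un{n}}$ commutes with $\sh_{\un{z}}$, we have $a_{\un{n}}(\sh_{\un{z}}(f)) = \Delta^{\un{n}}(f)(\un{z})$, and expanding $f = \sum_{\un{m}} a_{\un{m}}(f) \binom{\un{x}}{\un{m}}$ together with the identity $\Delta^{\un{n}}\big(\binom{\un{x}}{\un{m}}\big) = \binom{\un{x}}{\un{m}-\un{n}}$ (valid for $\un{n}\le\un{m}$, zero otherwise) gives
$$
a_{\un{n}}(\sh_{\un{z}}(f)) \;=\; \sum_{\un{m}\geq \un{n}} a_{\un{m}}(f)\binom{\un{z}}{\un{m}-\un{n}}.
$$
The plan is to read off both stability statements from this formula. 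For $\mathcal{C}^{\an\text{-}\lambda,\mu}$, I would use condition 1 of Proposition \ref{prop:equiv_conds_analytic_func}: since $|\un{z}|$-binomial coefficients lie in $\ZZ_p$, each term has valuation at least $\val_M(a_{\un{m}}(f)) \geq p^\lambda p^{\lfloor \log_p|\un{m}|_\infty\rfloor}+\mu$, and because $|\un{m}|_\infty \geq |\un{n}|_\infty$ in the sum, the infimum over $\un{m}\geq\un{n}$ is still bounded below by $p^\lambda p^{\lfloor \log_p|\un{n}|_\infty\rfloor}+\mu$; hence $\sh_{\un{z}}(f)$ again satisfies condition 1. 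For $\mathcal{C}^{\lambda\text{-}\an}$, I would similarly bound $\val_M(a_{\un{n}}(\sh_{\un{z}}(f))) - \lfloor p^\lambda\un{n}\rfloor$ below by $\inf_{\un{m}\geq\un{n}}\big(\val_M(a_{\un{m}}(f))-\lfloor p^\lambda\un{m}\rfloor\big)$, using $\lfloor p^\lambda\un{n}\rfloor \leq \lfloor p^\lambda\un{m}\rfloor$; this shows both that $\val_M(a_{\un{n}}(\sh_{\un{z}}(f)))-p^\lambda|\un{n}|\to\infty$ (so $\sh_{\un{z}}(f)$ lies in the space) and, taking the infimum over $\un{n}$, that $\val_\lambda(\sh_{\un{z}}(f))\geq \val_\lambda(f)$. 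Applying the same to $\sh_{-\un{z}}$ gives the reverse inequality, so each $\sh_{\un{z}}$ is an isometry.

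For continuity of the action, I would show that $\un{z}\mapsto \sh_{\un{z}}(f)$ is continuous at $\un{0}$ for fixed $f\in\mathcal{C}^{\lambda\text{-}\an}$, which by the isometry property and linearity suffices for joint continuity. The estimate to prove is that $\val_\lambda(\sh_{\un{z}}(f)-f)\to\infty$ as $\un{z}\to\un{0}$ in $\ZZ_p^d$. Writing $a_{\un{n}}(\sh_{\un{z}}(f)-f) = \sum_{\un{m}>\un{n}} a_{\un{m}}(f)\binom{\un{z}}{\un{m}-\un{n}}$ (the $\un{m}=\un{n}$ term cancels), I would split the sum at a large threshold: for $|\un{m}|$ large the condition $\val_M(a_{\un{m}}(f)) - p^\lambda|\un{m}|\to\infty$ controls the tail uniformly in $\un{z}$, while for the finitely many small $\un{m}$ the factor $\binom{\un{z}}{\un{m}-\un{n}}$ has large valuation once $\un{z}$ is sufficiently $p$-divisible (and vanishes unless $\un{m}-\un{n}$ is small, forcing $\un{n}$ small too). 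Balancing these gives $\val_\lambda(\sh_{\un{z}}(f)-f)\to\infty$.

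The main obstacle I anticipate is the bookkeeping in the continuity argument: one must make the tail bound genuinely uniform in $\un{z}$ and simultaneously control $\lfloor p^\lambda\un{n}\rfloor$ versus $\lfloor p^\lambda\un{m}\rfloor$ across the split, since $\val_\lambda$ weights coefficients by $\lfloor p^\lambda\un{n}\rfloor$ rather than by a fixed quantity. A clean way to handle this is to first record the elementary inequality $\lfloor p^\lambda\un{n}\rfloor \leq \lfloor p^\lambda\un{m}\rfloor$ for $\un{n}\leq\un{m}$ (so that shifting never decreases the weight at a given index), reducing everything to the plain statement that $\val_M(a_{\un{m}}(f))-p^\lambda|\un{m}|\to\infty$; the rest is then a standard "$\varepsilon$ split into head and tail" estimate as in the proof of Mahler's theorem above. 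The stability statements themselves are essentially immediate from the displayed coefficient formula and pose no real difficulty.
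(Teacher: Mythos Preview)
Your argument is correct and takes a genuinely different route from the paper's. The paper proceeds by reduction: for $\mathcal{C}^{\an\text{-}\lambda,\mu}$ it simply invokes condition~2 of Proposition~\ref{prop:equiv_conds_analytic_func} (since $\Delta^n_i$ commutes with shifts and $\val_M^{\op}$ is shift-invariant), while for $\mathcal{C}^{\lambda\text{-}\an}$ it first reduces from $\un{z}\in\ZZ_p^d$ to $\un{z}\in\ZZ^d$ via an approximation argument, then to $\un{z}=1_i$, where the two-term recursion $a_{\un{n}}(\sh_{1_i}(f))=a_{\un{n}}(f)+a_{\un{n}+1_i}(f)$ suffices. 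For the isometry, the paper picks a \emph{maximal} $\un{n}_0$ realizing $\val_\lambda(f)$, deduces $\val_\lambda(\sh_{1_i}(f))\le\val_\lambda(f)$, and then closes the loop via a chain of inequalities using that $\sh_{p^k1_i}$ is eventually close to the identity. By contrast, you work directly with arbitrary $\un{z}\in\ZZ_p^d$ through the Vandermonde-type identity $a_{\un{n}}(\sh_{\un{z}}(f))=\sum_{\un{m}\ge\un{n}}a_{\un{m}}(f)\binom{\un{z}}{\un{m}-\un{n}}$, and your isometry argument (apply the one-sided bound to $\sh_{-\un{z}}$) is considerably slicker than the paper's maximality-and-chain trick. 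The paper's approach has the minor advantage of only needing the simplest shift identity rather than the full Vandermonde expansion; your approach is more uniform, avoids the reduction steps entirely, and makes the continuity estimate explicit rather than passing through $\val_M^{\op}$.
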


\begin{proof}
For $\mathcal{C}^{\an\text{-}\lambda,\mu}(\ZZ_p^d,M)$ the statement follows from Proposition \ref{prop:equiv_conds_analytic_func}. We now show the statement holds for $\mathcal{C}^{\lambda\text{-}\an}(\ZZ_p^d,M)$. Suppose it were known for shifts by elements of $\ZZ^d$. Let $\un{z} \in \ZZ_p^d$ and let $f\in \mathcal{C}^{\lambda\text{-}\an}(\ZZ_p^d,M)$. As $f$ is uniformly continuous, we can find some $m$ so that $\val_M(f(\un{x}+\un{y})-f(\un{x}) > 1$ if $\val(\un{x}-\un{y}) > m$. Choosing $\un{z}_0 \in \ZZ^d$ with $\val(\un{z}-\un{z}_0) > m$, we see that $\val_M^{\op}(\sh_{\un{z}}(f)-\sh_{\un{z}_0}(f)) > 1$ and $\sh_{\un{z}_0}(f) \in \mathcal{C}^{\lambda\text{-}\an}(\ZZ_p^d,M)$ which implies $\sh_{\un{z}}(f) \in \mathcal{C}^{\lambda\text{-}\an}(\ZZ_p^d,M)$. This argument also shows the action of $\ZZ_p^d$ on $\mathcal{C}^{\lambda\text{-}\an}(\ZZ_p^d,M)$ is continuous (provided we show it exists). We thus reduce to the case where $\un{z} \in \ZZ^d$, which allows us to reduce further to the case $\un{z}=1_i$ for $1 \leq i \leq d$. Finally, we have $a_{\un{n}}(\sh_{1_i}(f))=a_{\un{n}}(f)+a_{\un{n}+1_i}(f)$ which shows $\sh_{1_i}(f) \in \mathcal{C}^{\lambda\text{-}\an}(\ZZ_p^d,M)$.

It remains to show each element of $\ZZ_p^d$ acts as an isometry on $\mathcal{C}^{\lambda\text{-}\an}(\ZZ_p^d,M)$. We may reduce again to showing that $\sh_{1_i}$ acts as an isometry. To show this, take $f \in \mathcal{C}^{\lambda\text{-}\an}(\ZZ_p^d,M)$ and let $\un{n}_0$ be maximal with $\val_\lambda(f) = \val_M(a_{\un{n}_0}(f))-\lfloor p^\lambda \un{n}_0 \rfloor)$. Then $\val_M(a_{\un{n}_0+1_i}(f)) > \val_M(a_{\un{n}_0}(f))$, and so the formula $a_{\un{n}}(\sh_{1_i}(f))=a_{\un{n}}(f)+a_{\un{n}+1_i}(f)$ shows $\val_{\lambda}(\sh_{1_i}(f)) \leq \val_\lambda (f).$ Now by using the uniform continuity again, we know that for $p^k$ for $k$ sufficiently large we have $\val_{\lambda}(\sh_{p^k 1_i}(f)) = \val_\lambda (f)$. Hence the inequalities
$$\val_{\lambda}(f) = \val_{\lambda}(\sh_{p^k 1_i}(f))  \leq \val_{\lambda}(\sh_{(p^k-1) 1_i}(f)) 
 \leq... \leq \val_{\lambda}(\sh_{1_i}(f)) \leq \val_{\lambda}(f)$$
 are all forced to be equalities, which shows $\val_{\lambda}(\sh_{1_i}(f)) = \val_\lambda (f),$ as required.
\end{proof}




\begin{lemma}
\label{lem:make_funcs_smaller_by_enlargning_l}
Let $\lambda \in \RR$ and $c > 0.$ Then there exist $l \geq 0$ such that for every $f \in \mathcal{C}^{\lambda \han}(\ZZ_p^d,M)$ and for every $1 \leq i \leq d$ we have $$\val_{\lambda}(\Delta_{p^l 1_i}(f)) \geq \val_{\lambda}(f) + c.$$
\end{lemma}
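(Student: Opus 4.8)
The plan is to reduce the statement to an explicit $p$-adic estimate on Mahler coefficients, in the spirit of the manipulations in the proofs of Theorem~\ref{thm:Mahler} and Proposition~\ref{prop:shifts_action}. First I would record how $\Delta_{p^l 1_i}$ acts on coefficients. Starting from the identity $a_{\un n}(\sh_{1_i}(g)) = a_{\un n}(g) + a_{\un n + 1_i}(g)$ established in the proof of Proposition~\ref{prop:shifts_action}, an immediate induction on $k$ using Pascal's rule gives $a_{\un m}(\sh_{k 1_i}(f)) = \sum_{s=0}^{k}\binom{k}{s}\,a_{\un m + s 1_i}(f)$; since $\sh_{p^l 1_i}$ is the $p^l$-th iterate of $\sh_{1_i}$, taking $k = p^l$ and subtracting the $s = 0$ term yields
$$a_{\un m}\big(\Delta_{p^l 1_i}(f)\big) = \sum_{s=1}^{p^l}\binom{p^l}{s}\,a_{\un m + s 1_i}(f).$$
(One could equally obtain this from Vandermonde's identity applied to $\binom{x_i + p^l}{n_i}$.) In particular $\Delta_{p^l 1_i}(f) \in \mathcal{C}^{\lambda\han}(\ZZ_p^d, M)$, since $\sh_{p^l 1_i}$ preserves this space.

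Next I would feed this into the definition of $\val_\lambda$. Fix $\un m$. By the ultrametric inequality, $\val_M\big(a_{\un m}(\Delta_{p^l 1_i}(f))\big) \geq \min_{1 \leq s \leq p^l}\big(\val_R\binom{p^l}{s} + \val_M(a_{\un m + s1_i}(f))\big)$, while the definition of $\val_\lambda$ (Definition~\ref{def:analytic_funcs}) gives $\val_M(a_{\un m + s 1_i}(f)) \geq \val_\lambda(f) + \lfloor p^\lambda(\un m + s 1_i)\rfloor$. Using $\lfloor p^\lambda(\un m + s 1_i)\rfloor - \lfloor p^\lambda \un m\rfloor = \lfloor p^\lambda(m_i + s)\rfloor - \lfloor p^\lambda m_i\rfloor \geq \lfloor p^\lambda s\rfloor \geq p^\lambda s - 1$, and then taking the infimum over $\un m$, one is left with
$$\val_\lambda\big(\Delta_{p^l 1_i}(f)\big) \ \geq\ \val_\lambda(f) + \min_{1 \leq s \leq p^l}\Big(\val_R\binom{p^l}{s} + p^\lambda s - 1\Big).$$
This bound is already uniform in $f$ and in $i$, so only the numerical quantity on the right needs attention.

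It then remains to choose $l$ (depending only on $\lambda$ and $c$) making $\delta(l) := \min_{1 \leq s \leq p^l}\big(\val_R\binom{p^l}{s} + p^\lambda s - 1\big) \geq c$. Here I would use the divisibility $v_p\binom{p^l}{s} \geq l - v_p(s)$, which is immediate from $s\binom{p^l}{s} = p^l \binom{p^l - 1}{s-1}$; since $\val_R(p) > 0$ and $\val_R$ is nonnegative on $\ZZ_p$, this gives $\val_R\binom{p^l}{s} \geq (l - v_p(s))\val_R(p)$, and combined with $s \geq p^{v_p(s)}$ and the substitution $j = v_p(s) \in \{0,\dots,l\}$ it yields $\delta(l) \geq \min_{0 \leq j \leq l}\big((l-j)\val_R(p) + p^{\lambda + j} - 1\big)$. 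Splitting the range at $j = l/2$: for $j \leq l/2$ the $j$-th term is $\geq \tfrac{l}{2}\val_R(p) - 1$, and for $j > l/2$ it is $\geq p^{\lambda + l/2} - 1$; hence $\delta(l) \geq \min\big(\tfrac{l}{2}\val_R(p),\ p^{\lambda + l/2}\big) - 1$, which tends to $\infty$ with $l$. So any $l$ with, say, $l \geq 2(c+1)/\val_R(p)$ and $l \geq 2(\log_p(c+1) - \lambda)$ works.

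I do not anticipate a genuine obstacle. The only mildly delicate point is the bookkeeping in the coefficient identity for $\Delta_{p^l 1_i}$, together with the tacit rearrangement of Mahler expansions that it uses — this is legitimate because the coefficients $a_{\un n}(f)$ tend to $0$ — after which the argument is a routine non-archimedean estimate; and since the final choice of $l$ depends only on $\lambda$, $c$, $p$ and $\val_R(p)$, the uniformity in $f$ and $i$ demanded by the statement is automatic.
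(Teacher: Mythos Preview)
Your argument is correct and is essentially the paper's own proof: both derive the identity $a_{\un m}(\Delta_{p^l 1_i}(f)) = \sum_{j\geq 1}\binom{p^l}{j}a_{\un m+j1_i}(f)$ (you via iterated shifts and Pascal's rule, the paper via Vandermonde applied termwise to the Mahler expansion), feed it into the definition of $\val_\lambda$, and arrive at the identical bound $\val_\lambda(\Delta_{p^l 1_i}(f)) \geq \val_\lambda(f) + \inf_{j\geq 1}\big(p^\lambda j - 1 + \val\binom{p^l}{j}\big)$. The only difference is that the paper stops at ``any $l$ with $\inf_{j\geq 1}(p^\lambda j - 1 + \val\binom{p^l}{j}) > c$ works'' and leaves the existence of such $l$ implicit, whereas you supply the explicit verification via $v_p\binom{p^l}{s} \geq l - v_p(s)$ and the split at $j = l/2$; this is a welcome addition but not a different method.
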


\begin{proof} Let $l\geq 0$ be arbitrary. We have the Vandermonde identity
$$\Delta_{p^l 1_i}(\binom{\un{x}}{\un{n}}) = \sum_{j=1}^{n_i}\binom{\un{x}}{\un{n}-j1_i}\binom{p^l}{j}.$$
Writing $f(\un{x}) = \sum_{\un{n}} a_{\un{n}} \binom{\un{x}}{\un{n}},$ the identity above shows that
$$\Delta_{p^l 1_i}(f) = \sum_{\un{n}} (\sum_{j=1}^\infty a_{\un{n} +j 1_i} \binom{p^l}{j}) \binom{\un{x}}{\un{n}},$$
and hence
$$\Delta^{\un{n}}(\Delta_{p^l 1_i}(f)(0)) = \sum_{j=1}^\infty a_{\un{n}+j1_i} \binom{p^l}{j}.$$

From this we get the lower bound
$$\val_M(\Delta^{\un{n}}(\Delta_{p^l 1_i}(f)(0))) \geq \inf_{j\geq 1}(\val(a_{\un{n}+j1_i}) + \val(\binom{p^l}{j})).$$

On the other hand, by definition we have the inequality
$$\val_M(a_{\un{n}+j1_i}) \geq \sum_{i=1}^d\lfloor  p^{\lambda}(n_i + j\delta_{ij})\rfloor + \val_{\lambda}(f).$$ Putting this all together, we get 
$$\val_{\lambda}(\Delta_{p^l 1_i}(f)) = \inf_{\un{n}}(\val_M(\Delta^{\un{n}}(\Delta_{p^l 1_i}(f)(0))) -\lfloor  p^{\lambda} \un{n} \rfloor) $$
$$ \geq \val_{\lambda}(f)+ \inf_{j\geq 1}(p^\lambda j -1 + \val(\binom{p^l}{j})),$$
and so any choice of $l$ which satisfies  $\inf_{j\geq 1}(p^\lambda j -1 + \val(\binom{p^l}{j})) > c$ works.
\end{proof}

\begin{corollary}
\label{cor:making_lambda_smaller}
For every $\lambda$ and $\lambda'$ there exists an $l \geq 0$ such that restriction from $\ZZ_p^d$ to $p^l \ZZ_p^d$ maps $\mathcal{C}^{\lambda\han}(\ZZ_p^d,M)$ into $\mathcal{C}^{\lambda'\han}(p^l \ZZ_p^d,M).$
\end{corollary}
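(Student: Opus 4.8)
The plan is to make the restriction map completely explicit and then feed it into Lemma~\ref{lem:make_funcs_smaller_by_enlargning_l}. Identifying $p^l\ZZ_p^d$ with $\ZZ_p^d$ via $\un{x}\mapsto p^l\un{x}$, the restriction of $f\in\mathcal{C}^{\lambda\han}(\ZZ_p^d,M)$ to $p^l\ZZ_p^d$ becomes the function $g(\un{x}):=f(p^l\un{x})$, which is again continuous. The first step is the bookkeeping identity $\Delta_i(g)=\big(\Delta_{p^l 1_i}(f)\big)\circ m_{p^l}$ (where $m_{p^l}$ denotes multiplication by $p^l$), which since the $\Delta_i$ commute with one another iterates to $\Delta^{\un{n}}(g)=\big(\Delta_{p^l 1_1}^{n_1}\circ\cdots\circ\Delta_{p^l 1_d}^{n_d}(f)\big)\circ m_{p^l}$; evaluating at $\un{0}$ and invoking Theorem~\ref{thm:Mahler} gives
$$a_{\un{n}}(g)=\big(\Delta_{p^l 1_1}^{n_1}\circ\cdots\circ\Delta_{p^l 1_d}^{n_d}(f)\big)(\un{0}).$$

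Next I would choose $l$. Put $c:=p^{\lambda'}+1$ and let $l\geq 0$ be the integer furnished by Lemma~\ref{lem:make_funcs_smaller_by_enlargning_l} for this $\lambda$ and this $c$, so that $\val_{\lambda}(\Delta_{p^l 1_i}(h))\geq \val_{\lambda}(h)+c$ for every $h\in\mathcal{C}^{\lambda\han}(\ZZ_p^d,M)$ and every $i$. Since $\mathcal{C}^{\lambda\han}(\ZZ_p^d,M)$ is a submodule stable under all shifts by Proposition~\ref{prop:shifts_action}, it is stable under each $\Delta_{p^l 1_i}=\sh_{p^l 1_i}-\Id$, so the lemma may be applied repeatedly. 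Iterating it $|\un{n}|$ times, one coordinate at a time, yields
$$\val_{\lambda}\big(\Delta_{p^l 1_1}^{n_1}\circ\cdots\circ\Delta_{p^l 1_d}^{n_d}(f)\big)\geq \val_{\lambda}(f)+|\un{n}|\,c.$$

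Finally I would combine the two displays. For any $h\in\mathcal{C}^{\lambda\han}(\ZZ_p^d,M)$ one has $\val_M^{\op}(h)\geq\val_{\lambda}(h)$: indeed $\lfloor p^{\lambda}\un{m}\rfloor\geq 0$ for every $\un{m}\in\ZZ_{\geq 0}^d$, and $\val_M^{\op}(h)=\inf_{\un{m}}\val_M(a_{\un{m}}(h))$ by Theorem~\ref{thm:Mahler}. Applying this to $h=\Delta_{p^l 1_1}^{n_1}\circ\cdots\circ\Delta_{p^l 1_d}^{n_d}(f)$ and using $\val_M(h(\un{0}))\geq\val_M^{\op}(h)$, the two displays give $\val_M(a_{\un{n}}(g))\geq\val_{\lambda}(f)+|\un{n}|\,c$, whence $\val_M(a_{\un{n}}(g))-p^{\lambda'}|\un{n}|\geq\val_{\lambda}(f)+|\un{n}|\to\infty$ as $|\un{n}|\to\infty$. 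Thus $g\in\mathcal{C}^{\lambda'\han}(\ZZ_p^d,M)$, i.e. $f|_{p^l\ZZ_p^d}\in\mathcal{C}^{\lambda'\han}(p^l\ZZ_p^d,M)$, which is the claim.

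I expect the only point requiring care to be the very first step: getting the identification $p^l\ZZ_p^d\cong\ZZ_p^d$ and the resulting formula for $a_{\un{n}}(g)$ exactly right, together with the remark that $\mathcal{C}^{\lambda\han}$ is stable under $\Delta_{p^l 1_i}$ so that the lemma can be iterated. After that everything is a mechanical iteration of Lemma~\ref{lem:make_funcs_smaller_by_enlargning_l}, with no genuine analytic difficulty left — all of it having been front-loaded into that lemma.
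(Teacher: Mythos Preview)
Your proof is correct and follows essentially the same approach as the paper: both choose $l$ via Lemma~\ref{lem:make_funcs_smaller_by_enlargning_l} for a constant $c$ exceeding $p^{\lambda'}$, iterate the lemma $|\un{n}|$ times to bound $\val_\lambda$ of the iterated $\Delta_{p^l 1_i}$'s applied to $f$, and then pass to $\val_M$ at $\un{0}$ to get the required growth of the Mahler coefficients of the restriction. The only cosmetic differences are that the paper normalizes to $f\in\mathcal{C}^{\lambda\han}(\ZZ_p^d,M)^+$ (so $\val_\lambda(f)\geq 0$) rather than carrying $\val_\lambda(f)$ through, and it uses the inequality $\val_M(g(0))\geq\val_\lambda(g)$ directly rather than factoring through $\val_M^{\op}$.
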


\begin{proof}
 Choose any $c > p^{\lambda'} + 1$ and $l\geq 0$ so that the previous lemma applies to $c$ and $\lambda$. Let $f \in \mathcal{C}^{\lambda\han}(\ZZ_p^d,M)$ a function. We wish to show its restriction lies in  $\mathcal{C}^{\lambda'\han}(p^l\ZZ_p^d,M)$. We may assume $f$ lies in $\mathcal{C}^{\lambda\han}(\ZZ_p^d,M)^+.$ Let $\Delta_{p^l}^{\un{m}}$ denote the composition $\Delta^{m_1}_{p^l 1_1} \circ ... \circ \Delta^{m_d}_{p^l 1_d}.$ We need to show that $$\val(\Delta_{p^l}^{\un{m}}(f)(0)) - p^{\lambda'}|\un{m}| \rightarrow \infty.$$ 
 Applying the previous lemma successively to $f \in \mathcal{C}^{\lambda\han}(\ZZ_p^d,M)^+$ we get
the inequality
$$\val_\lambda(\Delta_{p^l}^{\un{m}}(f)) \geq |\un{m}|c.$$

By the general inequality $\val_M(g(0)) \geq \val_\lambda(g)$ we deduce
$$\val_M(\Delta_{p^l}^{\un{m}}(f)(0)) \geq  |\un{m}|c \geq  p^{\lambda'} |\un{m}| + |\un{m}|$$ which gives the desired 
 lower bound for the $\val_M(\Delta_{p^l}^{\un{m}}(f)(0))$.
\end{proof}

For $\lambda \leq \lambda'$ and $\mu\leq \mu'$ we have natural maps $\mathcal{C}^{\lambda'\text{-}\an}(\ZZ_p^d,M) \rightarrow \mathcal{C}^{\lambda\text{-}\an}(\ZZ_p^d,M)$ and $\mathcal{C}^{\an\text{-}\lambda',\mu'}(\ZZ_p^d,M) \rightarrow \mathcal{C}^{\an\text{-}\lambda,\mu}(\ZZ_p^d,M)$. We will often consider filtered colimits along these maps. The following lemma shows it does not matter much which system we use.

\begin{lemma}
\label{lem:analytic_cofinality}
The systems $\{\mathcal{C}^{\an\text{-}\lambda,\mu}(\ZZ_p^d,M)\}_{\lambda,\mu}$ and $\{\mathcal{C}^{\lambda\text{-}\an}(\ZZ_p^d,M)\}_\lambda$ are cofinal with respect to each other. Furthermore, up to cofinality the systems do not depend on the choice of basis of $\ZZ_p^d$.
\end{lemma}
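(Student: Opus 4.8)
The plan is to prove cofinality in two directions plus the basis-independence statement, using the already-established characterizations of the two families of function spaces. For the inclusion $\mathcal{C}^{\lambda\text{-}\an}(\ZZ_p^d,M) \subset \mathcal{C}^{\an\text{-}\lambda',\mu'}(\ZZ_p^d,M)$ for suitable $\lambda',\mu'$: if $f \in \mathcal{C}^{\lambda\text{-}\an}(\ZZ_p^d,M)$ then in particular $\val_M(a_{\un{n}}(f)) \geq \lfloor p^\lambda \un{n}\rfloor + \val_\lambda(f) \geq p^\lambda |\un{n}| - d + \val_\lambda(f)$; since $|\un{n}| \geq |\un{n}|_\infty \geq p^{\lfloor \log_p |\un{n}|_\infty\rfloor}$, this gives $\val_M(a_{\un{n}}(f)) \geq p^\lambda p^{\lfloor \log_p |\un{n}|_\infty\rfloor} + \mu'$ with $\mu' = \val_\lambda(f) - d$, so condition 1 of Proposition~\ref{prop:equiv_conds_analytic_func} holds with $(\lambda,\mu')$ — i.e. $f$ lies in $\mathcal{C}^{\an\text{-}\lambda,\mu'}(\ZZ_p^d,M)$, which is certainly contained in the colimit. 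Conversely, to embed $\mathcal{C}^{\an\text{-}\lambda,\mu}(\ZZ_p^d,M)$ into $\mathcal{C}^{\lambda'\text{-}\an}(\ZZ_p^d,M)$ for suitable $\lambda'$: if $\val_M(a_{\un{n}}(f)) \geq p^\lambda p^{\lfloor \log_p |\un{n}|_\infty \rfloor} + \mu$ we need $\val_M(a_{\un{n}}(f)) - p^{\lambda'}|\un{n}| \to \infty$. Here I would fix $\lambda'$ with $p^{\lambda'} < p^{\lambda}/d$ (more precisely, choose $\lambda'$ so that $p^{\lambda'} d < p^\lambda$, e.g. $\lambda' = \lambda - 1 - \log_p d$): then since $|\un{n}| \leq d|\un{n}|_\infty \leq d p^{1 + \lfloor \log_p |\un{n}|_\infty\rfloor}$, we get $p^{\lambda'}|\un{n}| \leq p^{\lambda'} d p \cdot p^{\lfloor \log_p|\un{n}|_\infty\rfloor}$, and choosing $\lambda'$ slightly smaller makes $p^{\lambda'}dp < p^\lambda$, so $\val_M(a_{\un{n}}(f)) - p^{\lambda'}|\un{n}| \geq (p^\lambda - p^{\lambda'}dp)p^{\lfloor\log_p|\un{n}|_\infty\rfloor} + \mu \to \infty$ as $|\un{n}|\to\infty$. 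This shows $f \in \mathcal{C}^{\lambda'\text{-}\an}(\ZZ_p^d,M)$. Combining the two directions gives mutual cofinality.

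For the basis-independence, suppose $\varphi \in \mathrm{GL}_d(\ZZ_p)$ is a change of basis, inducing $f \mapsto f \circ \varphi$ on functions $\ZZ_p^d \to M$. I would argue via Corollary~\ref{cor:making_lambda_smaller} together with the shift-stability of Proposition~\ref{prop:shifts_action}: the point is that $\varphi$ sends the integer lattice points to integer combinations, and more usefully, any such linear automorphism — when restricted to a deep enough congruence subgroup $p^l \ZZ_p^d$ — is analytic in the relevant sense, so that precomposition with $\varphi$ carries $\mathcal{C}^{\lambda\text{-}\an}(\ZZ_p^d,M)$ into $\mathcal{C}^{\lambda'\text{-}\an}(\ZZ_p^d,M)$ for some $\lambda'$ depending on $\lambda$ and on $\varphi$ (but the system is the same up to cofinality, so this is harmless). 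Concretely, one reduces as in the proof of Proposition~\ref{prop:shifts_action} to elementary automorphisms; an elementary transvection $x_i \mapsto x_i + c x_j$ acts on the Mahler expansion by a Vandermonde-type identity analogous to the one in Lemma~\ref{lem:make_funcs_smaller_by_enlargning_l}, and a careful bookkeeping of the binomial coefficients $\binom{cx_j}{k}$ shows the growth condition is preserved after possibly shrinking $\lambda$. Since scalar rescalings $x_i \mapsto u x_i$ with $u \in \ZZ_p^\times$ are even simpler (they just permute the relevant difference operators up to units once one passes to $p^l\ZZ_p$), all generators are handled, hence all of $\mathrm{GL}_d(\ZZ_p)$.

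The main obstacle I expect is the basis-independence step, specifically controlling the effect of a transvection $x_i \mapsto x_i + cx_j$ on the coefficients $a_{\un{n}}$ of the Mahler expansion. Unlike a pure shift (handled cleanly in Proposition~\ref{prop:shifts_action}) or a dilation, a transvection genuinely mixes two coordinates, and the composite $\binom{x_i + cx_j}{n_i}$ expands into a double sum of products $\binom{x_i}{k}\binom{cx_j}{n_i-k}$, each of whose further expansion in $\binom{x_j}{m}$ introduces Stirling-type coefficients whose $p$-adic valuations must be tracked. The cleanest route is probably not to do this bare-handed but to observe that $\varphi$ restricted to $p^l\ZZ_p^d$ is, for $l$ large, a "contraction by $p$" composed with an isometry, and then invoke Lemma~\ref{lem:make_funcs_smaller_by_enlargning_l}/Corollary~\ref{cor:making_lambda_smaller} to absorb the loss — i.e. after restricting to a small enough congruence subgroup the discrepancy between $f$ and $f\circ\varphi$ becomes controllable coefficient-by-coefficient, and then one transports back along the identification $p^l\ZZ_p^d \cong \ZZ_p^d$. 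The fact that we only need cofinality, not an actual isomorphism of individual spaces, is what makes all of this go through, and I would emphasize that flexibility throughout.
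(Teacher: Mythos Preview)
Your argument for the mutual cofinality of the two systems is essentially the paper's: both hinge on the comparison $|\un{n}|_\infty \leq |\un{n}| \leq d|\un{n}|_\infty$ together with condition~1 of Proposition~\ref{prop:equiv_conds_analytic_func}. (One minor point: in your first inclusion the constant $\mu' = \val_\lambda(f)-d$ depends on $f$, so you are not embedding $\mathcal{C}^{\lambda\text{-}\an}$ into a \emph{single} $\mathcal{C}^{\an\text{-}\lambda,\mu'}$; but since the statement is really about the colimits coinciding, this is harmless.)

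For basis independence, however, you have missed the short route and your proposed workarounds are either incomplete or based on a wrong premise. The paper does not touch Mahler coefficients at all here: it uses condition~2 of Proposition~\ref{prop:equiv_conds_analytic_func}, which characterizes $\mathcal{C}^{\an\text{-}\lambda,\mu}$ by the inequalities $\val_M^{\op}(\Delta_i^n f)\geq p^\lambda p^{\lfloor\log_p n\rfloor}+\mu$. If $e_1',\dots,e_d'$ is another basis, the new difference operator $\Delta_i'$ is $\Delta_{e_i'}$ for some $e_i'\in\ZZ_p^d$, and Lemma~\ref{lem:val_bound_gen_elem} gives directly
\[
\val_M^{\op}((\Delta_i')^n f)\ \geq\ \min_{1\leq j\leq d}\val_M^{\op}(\Delta_j^{\lceil n/d\rceil} f),
\]
which is $\geq p^\lambda p^{\lfloor\log_p\lceil n/d\rceil\rfloor}+\mu \geq p^{\lambda'}p^{\lfloor\log_p n\rfloor}+\mu$ for a suitable $\lambda'$ depending only on $d$. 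So $f$ lies in $\mathcal{C}^{\an\text{-}\lambda',\mu}$ for the new basis, and cofinality follows in one line.

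By contrast, your plan to decompose $\varphi\in\mathrm{GL}_d(\ZZ_p)$ into transvections and track the Mahler expansion of $\binom{x_i+cx_j}{n_i}$ is genuinely painful (as you note) and is not carried out. Your ``cleanest route'' is also problematic: an element $\varphi\in\mathrm{GL}_d(\ZZ_p)$ restricted to $p^l\ZZ_p^d$ is simply the same $\ZZ_p$-linear isometry restricted to a sublattice---there is no sense in which it becomes ``a contraction by $p$ composed with an isometry'' for large $l$, so Corollary~\ref{cor:making_lambda_smaller} does not buy you anything here. The whole difficulty evaporates once you switch to the $\Delta$-operator description and invoke Lemma~\ref{lem:val_bound_gen_elem}, which was proved precisely for this purpose.
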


\begin{proof}
The cofinality of $\{\mathcal{C}^{\an\text{-}\lambda,\mu}(\ZZ_p^d,M)\}_{\lambda,\mu}$ and $\{\mathcal{C}^{\lambda\text{-}\an}(\ZZ_p^d,M)\}_\lambda$ follows from the inequality $|\un{n}|_\infty \leq |\un{n}| \leq |\un{n}|_\infty/d$ for $\un{n} \in \ZZ^d_{\geq 0}$ by condition 1 of Proposition \ref{prop:equiv_conds_analytic_func}. The cofinality with respect to a different basis follows from Lemma \ref{lem:val_bound_gen_elem} and condition 2 of Proposition \ref{prop:equiv_conds_analytic_func}.
\end{proof}

\begin{definition}
\label{def:analytic_functions}
We let $\mathcal{C}^{\la}(\ZZ_p^d,M)$ be the colimit of any of these cofinal systems. Elements lying in this module are thought of as locally analytic functions of $\ZZ_p^d$ which are valued in $M$.
\end{definition}

\subsection{Locally analytic vectors}

Let $M$ be as in the previous subsection and let $G$ be a compact $p$-adic Lie group of dimension $d$ which acts continuously on $M$ by isometries. There exists an open uniform subgroup $G_0 \subset G$ by Corollary 4.3 of \cite{DdMS03}. We can choose it to be normal. As $G_0$ is uniform, there exists an ordered basis $g_1,...,g_d$ such that the coordinate map $c:(x_1,...,x_d)\mapsto g_1^{x_1}\cdot...\cdot g_d^{x_d}$ gives a homeomorphism of $\ZZ_p^d$ with $G_0$. Set $G_i = G^{p^i} = \{g^{p^i}: g \in G_0\}$. The $G_i$ are open normal subgroups of $G_0$ which correspond the $p^i \ZZ_p^d$ under $c$. We define $\mathcal{C}^0(G_0,M), \mathcal{C}^{\an\text{-}\lambda,\mu}(G_0,M), \mathcal{C}^{\lambda\text{-}\an}(G_0,M)$ and $\mathcal{C}^{\la}(G_0,M)$ by pulling back along $c$ the definitions of Definition \ref{def:analytic_funcs}. We are not claiming all of these are independent of the coordinate system chosen, but we shall see later the independence of the coordinate system for the most interesting objects considered here.

If $g\in G_0$ there exists some maximal $i$ so that $g^{p^{-i}} \in G_0$; there then exists some basis of $G_0$ with $g^{p^{-i}}=g_1$. For this basis, shifting in $G_0$ by $g$ corresponds to shifting in $\ZZ_p^d$ by $c(g)$. It then follows from Proposition \ref{prop:shifts_action} that $G_0$ acts on $\mathcal{C}^{\la}(G_0,M)$ by the formula $g(f)(x) = g(f(g^{-1}x))$.

\begin{definition} \label{ref:loc_analytic}
We define locally analytic elements as follows.

1. Set\footnote{Even though $G_0$ does not act on the submodules $\mathcal{C}^{\lambda\text{-}\an}(G_0,M)$ and $\mathcal{C}^{\an\text{-}\lambda,\mu}(G_0,M)$ in general, it does act on the larger space $\mathcal{C}^\la(G_0,M)$, so it makes sense to speak of $G_0$-fixed points.} $M^{G_0,\lambda\text{-}\an} = \mathcal{C}^{\lambda\text{-}\an}(G_0,M)^{G_0}$ and $M^{G_0,\an\text{-}\lambda,\mu} = \mathcal{C}^{\an\text{-}\lambda,\mu}(G_0,M)^{G_0}$.

2. Let $M^{G_0\hla} := \mathcal{C}^\la(G_0,M)^{G_0}$. We have $$M^{G_0\hla} = \varinjlim_{\lambda}M^{G_0,\lambda\text{-}\an} = \varinjlim_{\lambda,\mu}M^{G_0,\an\text{-}\lambda,\mu}.$$ 
\end{definition}

There is a natural injective map $M^{G_0\hla} \hookrightarrow M$ obtained by mapping $f:G_0\rightarrow M$ to $f(1)$. Thus we will interchangeably regard $M^{G_0\hla}$ as the submodule of $G_0$-locally analytic elements in $M$. Under this injection, $M^{G_0\hla}$ is actually stable under the entire $G$-action (not just the $G_0$-action). Indeed given $m \in M^{G_0\hla}$, $g\in G$ and $g_0 \in G_0$, we may write $g_0g(m)=g(g^{-1}g_0g)(m)$. Since $G_0$ is normal, $g_0 \mapsto g^{-1}g_0g$ is an automorphism of $G_0$, hence of $\ZZ_p^d$ if we choose for $c$ the Lie algebra coordinates; this shows that $g_0 \mapsto (g^{-1}g_0g)(m)$ is still an analytic function by Lemma \ref{lem:analytic_cofinality}. Hence the same holds for $g_0 \mapsto g(g^{-1}g_0g)(m)$ as $g$ acts on $M$ as an isometry.

\begin{lemma}
\label{lem:analyticity_group_functoriality}
If $G_0' \subset G_0$ are uniform subgroups of $G$ then $M^{G_0\text{-}\la} \subset M^{G_0'\text{-}\la}.$
\end{lemma}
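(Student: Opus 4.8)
The plan is to compare the locally analytic structures coming from the two uniform groups $G_0'\subset G_0$ and show that every $G_0$-locally analytic vector is automatically $G_0'$-locally analytic. Since $G_0'$ is open in $G_0$, there is some $j\geq 0$ with $G_j = G_0^{p^j}\subset G_0'$, and it suffices to treat the two cases $G_0' = G_j$ and $G_0'$ an arbitrary uniform subgroup sitting between $G_j$ and $G_0$; in fact the cleanest route is to first reduce to the case $G_0' = G_j$ for some $j$, using that $M^{G_0'\hla}$ only depends on $G_0'$ up to passing to smaller open subgroups, and that $M^{G_j\hla}\subset M^{G_0'\hla}$ would then follow by the same argument applied to the pair $G_j\subset G_0'$. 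So the heart of the matter is: given $m\in M^{G_0\hla}$, show $m\in M^{G_j\hla}$.

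Concretely, I would argue as follows. Fix an ordered basis $g_1,\dots,g_d$ of $G_0$ with associated coordinate homeomorphism $c\colon\ZZ_p^d\xrightarrow{\ \sim\ }G_0$, under which $G_j$ corresponds to $p^j\ZZ_p^d$. Take $m\in M^{G_0\hla}$; by Definition \ref{ref:loc_analytic} this means the orbit map $\un{x}\mapsto c(\un{x})(m)$ lies in $\mathcal{C}^{\lambda\han}(G_0,M)$ for some $\lambda$, i.e.\ in $\mathcal{C}^{\lambda\han}(\ZZ_p^d,M)$ after transport along $c$. Now I invoke Corollary \ref{cor:making_lambda_smaller}: for any target $\lambda'$ there is an $l\geq 0$ so that restriction of functions from $\ZZ_p^d$ to $p^l\ZZ_p^d$ carries $\mathcal{C}^{\lambda\han}(\ZZ_p^d,M)$ into $\mathcal{C}^{\lambda'\han}(p^l\ZZ_p^d,M)$. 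Choosing $l\geq j$ (enlarging $l$ only helps, since restriction is transitive), the restriction of the orbit map of $m$ to $p^l\ZZ_p^d$ is $\lambda'$-analytic; but this restricted function is exactly the orbit map of $m$ for the group $G_l = c(p^l\ZZ_p^d)$ in the induced coordinates. Hence $m\in M^{G_l\hla}$, and since $G_l\subset G_j$ for $l\geq j$ this is not yet quite what I want — so I should instead note that the inclusion $M^{G_0\hla}\subset M^{G_l\hla}$ just obtained, together with the reverse-type containment, pins things down. Let me restructure: the real content is that $\mathcal{C}^{\lambda\han}(\ZZ_p^d,M)$ restricted to $p^j\ZZ_p^d$ lands in $\mathcal{C}^{\lambda'\han}(p^j\ZZ_p^d,M)$ for suitable $\lambda'$, which is precisely Corollary \ref{cor:making_lambda_smaller} applied with $l = j$ if the corollary's $l$ can be taken to be $j$; if the corollary forces a larger $l$, one applies it with its own $l$, obtains $m\in M^{G_l\hla}$ for some $l\geq j$, and then runs the identical argument with the pair $G_l\subset G_j$ to climb back one step — but that reverses the inclusion.

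The correct and simplest formulation: the orbit map of $m$ for $G_0$ restricts to the orbit map of $m$ for $G_0'$ (after choosing compatible coordinates, which is possible since $G_0'$ is uniform and the coordinate map $c$ may be chosen to restrict appropriately — or more robustly, one uses Lemma \ref{lem:analytic_cofinality} to see the analytic class does not depend on the basis). By Corollary \ref{cor:making_lambda_smaller} and the fact that restriction from $\ZZ_p^d$ to $p^l\ZZ_p^d$ (and in particular to the subgroup $G_0'$, which contains some $G_l$) sends $\mathcal{C}^{\lambda\han}$ into $\mathcal{C}^{\lambda'\han}$, the restricted orbit map is locally analytic as a function on $G_0'$, witnessing $m\in M^{G_0'\hla}$. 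The $G_0'$-invariance of this function is automatic because $m$ is a genuine element of $M$ and the function is its orbit map.

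The main obstacle, and the only genuinely delicate point, is the coordinate/basis bookkeeping: $G_0'$ need not be one of the canonical subgroups $G_i$ attached to the chosen basis of $G_0$, so one cannot directly say "restrict to $p^l\ZZ_p^d$." The honest fix is to first pass to $G_l\subset G_0'$ for $l$ large (so $G_l$ \emph{is} canonical for the basis of $G_0$), apply Corollary \ref{cor:making_lambda_smaller} to get $m\in M^{G_l\hla}$, and then observe that $M^{G_l\hla}\subset M^{G_0'\hla}$ is trivial because $G_l$-local analyticity of the orbit map on $G_l$, together with $m\in M$ being a fixed vector under the finite index $G_l\subset G_0'$ relation — no, this still needs that a function locally analytic on the small group extends to one on the big group, which is the content we are proving. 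The clean resolution is to note that local analyticity is detected on any open subgroup: $M^{G_0'\hla}$ is defined via $\mathcal{C}^\la(G_0',M)$, and an element of $M$ whose $G_l$-orbit map (for $G_l\subset G_0'$ open) is locally analytic has locally analytic $G_0'$-orbit map too, since $\mathcal{C}^{\lambda\han}(G_0',M)$ can be checked on $p^l\ZZ_p^d$-cosets via the difference operators $\Delta_i$ of Proposition \ref{prop:equiv_conds_analytic_func} — continuity plus analyticity of the restriction to a finite-index subgroup, the group being covered by finitely many translates, forces analyticity on the whole. So I would phrase the final proof as: reduce via $G_l\subset G_0'\subset G_0$ with $l$ large to the canonical case, apply Corollary \ref{cor:making_lambda_smaller} to get $M^{G_0\hla}\subset M^{G_l\hla}$, and conclude $M^{G_l\hla}\subset M^{G_0'\hla}$ by the finite-cover argument, so that $M^{G_0\hla}\subset M^{G_0'\hla}$.
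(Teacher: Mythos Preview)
Your argument has a genuine gap at the final step, and the detour through $G_l$ makes the proof harder rather than easier.

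The problematic step is the claim $M^{G_l\hla}\subset M^{G_0'\hla}$ for $G_l\subset G_0'$. Notice this is the \emph{reverse} of the inclusion in the lemma (applied to the pair $G_l\subset G_0'$), so it certainly cannot be an instance of what you are proving. Your ``finite-cover argument'' asserts that if a function on $G_0'$ is analytic on each $G_l$-coset then it is analytic on $G_0'$. But condition 2 of Proposition \ref{prop:equiv_conds_analytic_func} asks for bounds on $\Delta_{g_i'}^n(f)$ where $g_i'$ runs over a basis of $G_0'$; analyticity on $G_l$-cosets only gives bounds on $\Delta_{h}^n(f)$ for $h\in G_l$, and there is no reason $g_i'\in G_l$. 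Bridging from the latter to the former is not automatic: one has, say in characteristic $p$, that $(g_i'^{p^k}-1)=(g_i'-1)^{p^k}$, which lets you convert bounds on $(g_i'^{p^k}-1)^n$ into bounds on $(g_i'-1)^{p^kn}$, but in mixed characteristic this breaks down, and in any case $G_l$ (defined via the basis of $G_0$) need not be of the form $(G_0')^{p^k}$ at all. So this step is unjustified as written.

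The paper's proof avoids the detour entirely. Working directly with condition 2 of Proposition \ref{prop:equiv_conds_analytic_func} for a basis $g_1',\dots,g_d'$ of $G_0'$: each $g_j'$ corresponds under the coordinate map of $G_0$ to some $\un{y}_j\in\ZZ_p^d$, and Lemma \ref{lem:val_bound_gen_elem} bounds $\val_M^{\op}(\Delta_{\un{y}_j}^n(f))$ below by $\min_i\val_M^{\op}(\Delta_i^{\lceil n/d\rceil}(f))$. If $f\in\mathcal{C}^{\an\text{-}\lambda,\mu}(G_0,M)$ this is $\geq p^\lambda p^{\lfloor\log_p\lceil n/d\rceil\rfloor}+\mu$, which is of the form $p^{\lambda'}p^{\lfloor\log_p n\rfloor}+\mu'$ for suitable $\lambda',\mu'$. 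Hence the restriction lies in $\mathcal{C}^{\an\text{-}\lambda',\mu'}(G_0',M)$, and the lemma follows. You were close to this when you mentioned Lemma \ref{lem:analytic_cofinality} in passing (whose proof uses exactly this mechanism), but then abandoned it for the $G_l$ route.
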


\begin{proof}
Let $f \in \mathcal{C}^{\an\text{-}\lambda,\mu}(G_0,M)$. By Lemma \ref{lem:val_bound_gen_elem} and condition 2 of Proposition \ref{prop:equiv_conds_analytic_func} we have that $f$ restricts to $
\mathcal{C}^{\an\text{-}\lambda',\mu'}(G_0',M)$ for some $\lambda',\mu'$, from which the lemma follows. 
\end{proof}

\begin{definition}
\label{def:locally_analytic_vectors}
The locally analytic elements of $M$ are given by $$M^{\la} = \varinjlim_{G_0}{M^{G_0\hla}},$$
the colimit taken over uniform normal subgroups $G_0$ of $G$.
\end{definition}

\begin{remark}
We have made the definition above to get a concept of locally analytic vectors independent of the choice of $G_0$, which will suffice for this article. Of course, one already expects $M^{G_0\hla}$ to be independent of the choice of $G_0$, as is known in the characteristic 0  theory. We plan to address this and other foundational issues in future work.
\end{remark}

\begin{example}
\label{example:loc_analytic_elems}

1. Suppose $R= \QQ_p$ so that $M$ is a $\QQ_p$-vector space with $\val_M$ making it a Banach space. In this case, $M^{\la}$ conicides with the usual locally analytic vectors (as defined for example in \cite{Eme17}). This is a consequence of the Amice theorem (Chapitre 3 of \cite{Am64}, see also Théorème I.4.7 of \cite{Co10}).

In this setting, the $G_0$-action can be differentiated and one gets an action of the Lie algebra $\mathrm{Lie}(G)$, which is a key extra structure. Unfortunately, this action seems to be unavailable outside this special case. The reason is that when $p$ is the topologically nilpotent unit, one can link functions of the form $\binom{x}{n}$ and $x^n$ by cancelling out the $p$-part of the denominator of $\binom{x}{n}$ (see the proof of Théorème I.4.7 of \cite{Co10}). But it unclear (at least to us) how to do something of this flavour more generally.

2. Suppose $M$ is an $\FF_p$-vector space. In this case, one checks using Proposition \ref{prop:equiv_conds_analytic_func} that $M^{\la}$ coincides with the super-Hölder vectors $M^{\sh} \subset M$ defined in \cite{BR24}. 

3. Suppose we have a perfectoid field $\widehat{K}_\infty$ in characteristic 0 with an action of $\Gamma \cong \ZZ_p$. Recall that we can form the tilt $\widehat{K}_\infty^{\flat}$ which is a perfectoid field in characteristic $p$, and there is a multiplicative map denoted $\sharp$ from $\widehat{K}_\infty^{\flat}$ to $\widehat{K}_\infty$. For $x \in \widehat{K}_\infty^{\flat,+}$ it is given by $x^\sharp = \lim_{n \rightarrow \infty} y_n^{p^n}$ where the $y_n$ are any lifts of $x^{1/p^n}$ to $\widehat{K}_\infty^+.$



\textbf{Claim.} Suppose that $x^\sharp$ is $\Gamma$-smooth. Then $x$ is locally analytic. 

Indeed, without loss of generality, we may assume $x^\sharp$ is fixed by $\gamma$ and $x\in \widehat{K}_\infty^{\flat,+}.$ 
The element $\gamma^p$ fixes $x^{1/p}$, because the action on the $p$'th roots of $x$ gives a homomorphism $\ZZ_p \rightarrow S_n$ with $n \leq p$, and any such homomorphism has to factor through $\ZZ/p.$ Arguing similarly by induction, we see that  $(x^{1/p^m})^\sharp$ is fixed by $\gamma^{p^m}$.

Hence, we get
$$\val_{\widehat{K}_\infty^\flat}(\gamma^{p^m}((x^{1/p^m}))-(x^{1/p^m})) = 
\val_{\widehat{K}_\infty}([\gamma^{p^m}((x^{1/p^m}))-(x^{1/p^m})]^\sharp)$$

which is $$\geq \min(\val(p),\val([\gamma^{p^m}((x^{1/p^m})^\sharp)-(x^{1/p^m})^\sharp]))=\val(p).$$

Hence $\val(\gamma^{p^m}(x)-x) \geq p^m \val(p)$ which shows that $x$ is locally analytic. This concludes the proof of the claim.

For example, take the cyclotomic extension for $\widehat{K}_\infty$ with $\Gamma$ being the cyclotomic group. The tilt $\widetilde{\mathbf{E}}_{\QQ_p}$ is isomorphic to the $X$-adic completion of $\cup_n \FF_p((X^{1/p^n}))$. The element $X \in \widetilde{\mathbf{E}}_{\QQ_p}$ has $X^\sharp = \zeta_p - 1 \in K_\infty$, and so is locally analytic. This is of course also easy to check directly. Let us point out that the main theorem of \cite{BR22} gives the much stronger $$\widetilde{\mathbf{E}}^\la_{\QQ_p} = \cup_n \FF_p((X^{1/p^n}))$$ ($\widetilde{\mathbf{E}}^\sh$ in their notation) which shows that taking locally analytic vectors undoes the $X$-adic completion. In $\mathsection{4}$ we shall show how to deduce this result from our main theorem.
\end{example}

In Example \ref{example:loc_analytic_elems}.3 we had a field whose locally analytic vectors were also a field. This happens in general, as shown in the following.
\begin{lemma}
(i). The abelian subgroup $R^\la \subset R$ is a subring.

(ii). Let $r \in R^\times \cap R^\la$. Then $r^{-1}\in R^\la.$
\end{lemma}

\begin{proof}
This is similar to argument of Lemma 2.5 of \cite{BC16} which we produce here for the convenience of the reader. 
We may assume $G=G_0$ is a uniform subgroup and identify it analytically with $\ZZ_p^d$. Recall that locally analytic functions are those for which the valuation of $a_{\un{n}}$ grows at least linearly with $|\un{n}|.$
For (i), note that for $\un{n},\un{m}\in \ZZ^d_{\geq 0}$ we have
\begin{align}
\label{identity:binomial_multiplication}
\binom{\un{x}}{\un{n}}\cdot\binom{\un{x}}{\un{m}}=\sum_{\un{k}\leq\un{n}+\un{m}}c_{\un{k}}\binom{\un{x}}{\un{k}} 
\end{align}
for some $c_{\un{k}} \in \ZZ.$ This proves $R^\la$ is a ring. 
For (ii), write $\orb_r(\un{x})= r+\sum_{\un{n}
\neq 0}a_{\un{n}}\binom{\un{x}}{\un{n}}$. We have the identity
$$\orb_{1/r}(\un{x})=\frac{1}{r}\sum_{j \geq 0}(-1)^j(\sum_{\un{n}\neq 0}a_{\un{n}}\binom{\un{x}}{\un{n}}/r)^j,$$
which is still locally analytic.
\end{proof}

\begin{proposition}
\label{prop:locally_analyic_basis}
Suppose that $M$ is free of rank $d$ over $R$, and suppose $m_1,...,m_d$ is an $R$-basis of $M$ with $g \mapsto \mathrm{Mat}(g)$ locally analytic in each coordinate. Then $M^\la = \oplus_i R^\la \cdot m_i.$ 
\end{proposition}

\begin{proof}
Again, we reproduce the argument of Proposition 2.3 of \cite{BC16} for the convenience of the reader. Let $a_{ij}(g)$ be the coordinates of $g\mapsto\mathrm{Mat}(g)$. Then $g(m_i)=\sum_{j}a_{ij}(g)m_j,$ so each $m_i$ is locally analytic. This proves the inclusion $\oplus_i R^\la \cdot m_i \subset M^\la$. Conversely, let $m \in M^\la$. Write $m=\sum_i r_i m_i$ with $r_i \in R.$ By assumption, we may write $g(m) = \sum_i f_i(g)m_i$ with each $f_i$ locally analytic. Applying $g$ to $m = \sum_i r_i m_i$, we get the identity $g(r_i) = \sum_{j}b_{ij}(g)f_j(g),$ where $b_{ij}(g)$ are the coordinates of $\mathrm{Mat}(g)^{-t}$. These are polynomials in the $a_{ij}(g)$ multiplied by $\det(g)$, hence are locally analytic by the previous lemma.
\end{proof}

\subsection{Higher locally analytic elements}

It will be useful for us to define a derived functor for $M \mapsto M^{\la}$. 
As $M^{\la} = \varinjlim_{G_0}\mathcal{C}^{\la}(G_0,M)^{G_0}$, we may extend this definition to $i\geq 0$ by setting
$$\R^{i}_{\la}(M) := \varinjlim_{G_0}\mathrm{H}^{i}(G_0,\mathcal{C}^{\la}(G_0,M)).$$

Here we are considering continuous cocycles, taking the inductive topology on $\mathcal{C}^{\la}(G_0,M)$ induced from that of its submodules $\mathcal{C}^{\lambda\text{-}\an}(G_0,M).$ 

We shall call these groups the higher locally analytic elements of $M$. 
If 
$$0 \rightarrow M_1 \rightarrow M_2 \rightarrow M_3 \rightarrow 0$$
is a short exact sequence of submodules in the appropriate category then we claim we have a long exact sequence
$$0 \rightarrow M_1^{\la} \rightarrow M_2^{\la} \rightarrow M_3^{\la} \rightarrow \R^1_{\la}(M_1) \rightarrow \R^1_{\la}(M_2) \rightarrow \R^1_{\la}(M_3) \rightarrow ...$$

This requires some explanation. Using the open mapping theorem in this setting 
(Theorem 2.2.8 of \cite{KL11}) we conclude all the mappings are strict. From here the argument for exactness is the same as Lemma 2.2.2 of \cite{Pan22}.

\section{Decompletion}
In this section we shall prove decompletion results for $G$-modules $M$ as in $\mathsection{2}$ under more assumptions. Subsection $\mathsection{3.1}$ will state the results while the rest of the subsections will be devoted to their proof.

\subsection{Statement of the results}

The set up is as follows. Let $(\WLambda, \WLambda^+)$ be a pair of topological rings with $\WLambda^+ \subset \WLambda$ and $\varpi$ an element of $\WLambda^+$. We assume $\WLambda$ is a Tate algebra over $\ZZ_p$ with $\WLambda^+$ a ring of definition, and $\varpi$ a topologically nilpotent unit. We assume $\WLambda^+$ is $\varpi$-adically complete, and endow $\WLambda$ with a valuation $\val_{\WLambda}$ making it so that $\WLambda^+=\WLambda^{\val\geq0}$ and $\val_{\WLambda}(\varpi x) = \val_{\WLambda}(\varpi)+ \val_{\WLambda}(x)$.
Finally, we assume we are given a compact $p$-adic Lie group $G$ acting on $\WLambda$ by isometries. We let $M$ be a finite free module over $\WLambda$, endowed with a $\varpi$-adic topology and a semilinear $G$-action. We may choose a $G$-stable $\WLambda^+$-lattice $M^+ \subset M$ so that the topology on $M$ is induced from the $\varpi$-adic valuation $\val_M$ making $M^+$ the open unit ball (this valuation is implicitly needed to make sense of $\R^i_{\la}(M)$ in what follows).

The main result of this section is the following result which generalizes Theorem C of \cite{Po24}.

\begin{theorem}
\label{main_thm}
Suppose that the pair $(\WLambda, \WLambda^+)$ satisfies the Tate-Sen axioms (TS1)-(TS4) (see $\mathsection{3.2}$). 
Then

1. The natural map $$\WLambda \otimes_{\WLambda^{\la}} M^{\la} \rightarrow M$$ is an isomorphism.

2. If moreover the Lie algebra $\mathrm{Lie}(G)$ is abelian, 
we have $\R_{\la}^i(M) = 0$ for $i \geq 1$.

\end{theorem}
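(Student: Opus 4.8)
The plan is to run a Tate--Sen style descent argument, but carried out "analytically" in the sense that at each finite level we work with the $\lambda$-analytic functions of Section 2 rather than with naive $\Gamma_n$-invariants. First I would recall the Tate--Sen formalism: axioms (TS1)--(TS4) provide, for each open subgroup, an almost-section (a $\Gamma_n$-equivariant "Tate's normalized trace" type operator) together with quantitative estimates on its kernel and on $1-\gamma$ for topological generators $\gamma$. Because $M$ is finite free over $\WLambda$ and $\mathrm{Lie}(G)$ is abelian, I can choose a uniform $G_0$, an ordered basis $g_1,\dots,g_d$ giving Lie-algebra coordinates $c:\ZZ_p^d \xrightarrow{\sim} G_0$, and a $G$-stable lattice $M^+$. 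The key reduction, exactly as in \cite{Po24}, is to pass from $M$ to the "induced" module $\mathcal{C}^{\la}(G_0,M)$: one shows $\mathrm{H}^i(G_0,\mathcal{C}^{\la}(G_0,M))$ computes $\R^i_{\la}(M)$ in the limit, and that $\mathcal{C}^{\la}(G_0,M)$ carries a second commuting $G_0$-action (by shifts) whose invariants recover $M^{\la}$, so that a Hochschild--Serre / double-complex argument reduces both statements to controlling cohomology of a single $\gamma$ acting on an analytic function module.

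Next I would build, level by level, the decompletion. Fix $\lambda$; using Corollary \ref{cor:making_lambda_smaller} (restriction from $\ZZ_p^d$ to $p^l\ZZ_p^d$ improves the analyticity radius) together with Lemma \ref{lem:make_funcs_smaller_by_enlargning_l} (the operators $\Delta_{p^l 1_i}$ contract $\val_\lambda$ by an arbitrarily large amount), I can arrange that for a deep enough subgroup $G_m$ the operators $1-g_i$ (equivalently $\Delta_{1_i}$ up to the twist by the semilinear action and isometry, using that each shift is an isometry by Proposition \ref{prop:shifts_action}) are "small" on $\mathcal{C}^{\lambda\han}(G_m,M^+)$. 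Combined with the Tate--Sen almost-section $R_m$ on $\WLambda$ extended to $M = \WLambda^n$, this gives a decomposition $\mathcal{C}^{\lambda\han}(G_m,M) = (\text{analytic vectors at level } m) \oplus (1-\gamma)(\cdots)$ with the complement part on which $1-\gamma$ is invertible with bounded inverse — this is the analogue of the "$M = M^{H} \oplus X_H$" splitting in the classical theory. Iterating over $m$, taking the colimit over $\lambda$, and using that the $\Delta_{p^l 1_i}$-estimates are uniform, one shows $\WLambda \otimes_{\WLambda^{\la}} M^{\la} \to M$ is an isomorphism (surjectivity: every element is approximated $\varpi$-adically by genuinely analytic ones on a small enough subgroup and the errors are killed; injectivity: faithful flatness / the lattice structure), which is part 1. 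For part 2, the same splitting shows $\mathrm{H}^i(G_m,\mathcal{C}^{\lambda\han}(G_m,M))$ maps to zero in $\mathrm{H}^i(G_{m'},\mathcal{C}^{\lambda'\han}(G_{m'},M))$ for $m'\gg m$, $\lambda'\gg\lambda$, because a cocycle valued in the "$(1-\gamma)$-invertible" part is a coboundary there and the genuinely analytic part contributes, in the colimit, only cohomology of $G_0$ acting on a finite free $\WLambda^{\la}$-module through a pro-$p$ group — which vanishes in positive degrees after the colimit by a Lazard-type argument using that $\mathrm{Lie}(G)$ is abelian (so the relevant Koszul complex is exact after inverting nothing, just using the smallness). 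Hence $\R^i_{\la}(M)=0$ for $i\ge 1$.

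The reason $\mathrm{Lie}(G)$ abelian is needed, and where I would be most careful, is in two places: first, it guarantees that conjugation $g_0 \mapsto g^{-1}g_0 g$ acts on $\ZZ_p^d$ linearly enough that the analytic structure is genuinely $G$-stable and coordinate-independent in the way used after Definition \ref{ref:loc_analytic}; second and more seriously, the cohomology-vanishing at the end reduces to the statement that the Koszul complex on the commuting operators $\log g_1,\dots,\log g_d$ (or their $\Delta$-analogues) is exact on the decompleted module, and for $d\ge 2$ this is false without commutativity of the associated Lie brackets.

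The main obstacle, as I see it, is the interaction between the two $G_0$-actions on $\mathcal{C}^{\la}(G_0,M)$ — the semilinear "internal" action coming from $M$ and the "external" shift action — when $M$ is not trivial as a $G$-module: one must check that the Tate--Sen almost-section, which is $\WLambda$-linear and built from the internal action, interacts correctly with the shift action so that the splitting $\mathcal{C}^{\lambda\han} = (\text{analytic}) \oplus (1-\gamma)(\cdots)$ is compatible with taking shift-invariants (this is precisely the "fiber product of the two methods" the introduction alludes to). Managing the quantitative estimates — matching the Tate--Sen constants against the $\val_\lambda$-contraction constants from Lemma \ref{lem:make_funcs_smaller_by_enlargning_l} and Corollary \ref{cor:making_lambda_smaller}, uniformly as $\lambda \to \infty$ so the colimit behaves — is the technical heart, but it is exactly parallel to \cite{Po24} and \cite{Po22b} and should go through.
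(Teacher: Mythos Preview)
Your overall architecture is close to the paper's, but there are two genuine gaps in the plan for part 2.

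First, the direction of $\lambda$ is wrong. You write that cocycles die after passing to $\lambda' \gg \lambda$, but in fact the paper must pass to $\lambda' \leq \lambda$ (see Warning \ref{warning: c_smalness} and Proposition \ref{prop:killing_cocycle_coming_from_small_pairs}). The issue is that the explicit ``antiderivative'' $F(x) = \sum_{k \geq 0} (-1)^k \sum_n (\gamma-1)^k(\gamma^{-k}(m_n))\binom{x}{n+k+1}$ solving $(\gamma-1)F = f$ only converges when $p^{\lambda'}$ is smaller than the Tate--Sen contraction constant $s$ from Proposition \ref{prop:action_uniformity}; increasing $\lambda$ makes the $\binom{x}{n}$'s larger in $\val_\lambda$ and destroys convergence. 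This is precisely the ``trick'' the paper isolates, and without it the argument does not close.

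Second, your endgame for the ``genuinely analytic part'' does not work as stated. There is no Lie algebra action in mixed characteristic (Example \ref{example:loc_analytic_elems}.1 explains exactly why $\log g_i$ is unavailable here), so a Lazard/Koszul argument on $\log g_1,\dots,\log g_d$ is not on the table. Moreover, replacing $\log g_i$ by $g_i-1$ and invoking ``smallness'' is not enough: smallness of $\gamma-1$ gives no vanishing of $N/(\gamma-1)N$ in general. The paper instead uses the Tate--Sen character $\chi$ to split $G_0 \cong \Gamma_0 \times H_0$ with $\Gamma_0 \cong \ZZ_p$, kills the $H_0$-cohomology via (TS1) and Hochschild--Serre (Proposition \ref{prop:H_cohomology_vanishing}), uses the $R_{H_0,n}$-splitting and (TS3) to dispose of the $X_{H_0,n}$-piece (Proposition \ref{prop:gamma_coh_vanishing_X}), and is then left with $\mathrm{H}^1(\Gamma_0, D^{\lambda}_{H_0,n}(M))$ --- an infinite-rank analytic function module, not a finite free $\WLambda^{\la}$-module --- which is handled by the explicit $F$ above after decreasing $\lambda$. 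Your plan for part 1 via $\varpi$-adic approximation is also more roundabout than needed: the paper simply observes that the Tate--Sen descent $D_{H',n}(M)$ already sits inside $M^{G'\hla}$ by Corollary \ref{cor:descent_loc_analytic}, giving a locally analytic $\WLambda$-basis of $M$ directly.
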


\begin{corollary}
\label{cor:coh_comparison_dim_1}
Let $M$ be as above, and assume further that $G\cong\ZZ_p$. Then there exist natural isomorphisms $$\mathrm{H}^{i}(G,M^{\la}) \cong \mathrm{H}^{i}(G,M)$$ for $ i\geq 0$.
\end{corollary}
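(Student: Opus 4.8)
The plan is to deduce \Cref{cor:coh_comparison_dim_1} from \Cref{main_thm} together with the spectral-sequence machinery implicit in the definition of $\R^i_{\la}$. The key point is that when $G \cong \ZZ_p$, both sides of the desired isomorphism are computed by a two-term complex, so the comparison reduces to controlling a single connecting map.

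First I would set up the Hochschild–Serre-type spectral sequence relating group cohomology of $G$, the ``analytic induction'' $\mathcal{C}^{\la}(G_0,M)$, and the higher locally analytic elements $\R^i_{\la}(M)$. Concretely, filtering by the open uniform subgroups $G_0$ and using Shapiro's lemma for the locally analytic induced module, one obtains (after passing to the colimit over $G_0$) a spectral sequence with $E_2^{i,j} = \mathrm{H}^i(G, \R^j_{\la}(M))$ converging to $\mathrm{H}^{i+j}(G,M)$ — this is the standard device by which $M^{\la}$ and its derived functors recover $M$. By part 2 of \Cref{main_thm}, $\R^j_{\la}(M) = 0$ for $j \geq 1$, so the spectral sequence degenerates and yields $\mathrm{H}^i(G,M) \cong \mathrm{H}^i(G, \R^0_{\la}(M)) = \mathrm{H}^i(G, M^{\la})$ for all $i \geq 0$. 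That is really the whole argument at the level of abstract nonsense; the substance is in justifying that the spectral sequence exists and converges in this topological setting, and that the edge map is the natural one induced by $M^{\la} \hookrightarrow M$.

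Alternatively — and this is probably cleaner for the one-dimensional case — I would argue directly. Since $G \cong \ZZ_p$, continuous cohomology is concentrated in degrees $0$ and $1$ and is computed by $\mathrm{H}^0(G,N) = N^{\gamma=1}$, $\mathrm{H}^1(G,N) = N/(\gamma-1)N$ for a topological generator $\gamma$ (strictness of $\gamma - 1$ being the needed input, available via the open mapping theorem of \cite{KL11} as invoked in $\mathsection{2.3}$). For $i=0$ the claim $(M^{\la})^{\gamma=1} = M^{\gamma=1}$ is immediate because any $\gamma$-fixed vector is trivially locally analytic, so $M^{\gamma = 1} \subseteq M^{\la}$. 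For $i = 1$ I must show $\gamma - 1 : M^{\la} \to M^{\la}$ and $\gamma - 1: M \to M$ have ``the same'' cokernel, i.e. the inclusion $M^{\la} \hookrightarrow M$ induces an isomorphism $M^{\la}/(\gamma-1)M^{\la} \xrightarrow{\sim} M/(\gamma-1)M$. Surjectivity: given $m \in M$, part 1 of \Cref{main_thm} lets me write $m$ as a $\WLambda$-combination of elements of $M^{\la}$; using that $\WLambda = \WLambda^{\la}$-generated-up-to-coboundaries is not automatic, so here I would instead run the long exact sequence of $\mathsection{2.3}$ on a presentation, or more simply use that $\R^1_{\la}(M) = 0$ forces the cokernel of $M^{\la} \to M$ to be ``$\la$-acyclic'' in a way that is killed by $\gamma - 1$. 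Injectivity: if $m \in M^{\la}$ and $m = (\gamma-1)n$ for some $n \in M$, I must produce $n' \in M^{\la}$ with $m = (\gamma - 1)n'$; this is exactly the vanishing of $\R^1_{\la}$ applied to the relevant complex, which says $n$ may be taken locally analytic.

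I expect the main obstacle to be the bookkeeping that turns ``$\R^i_{\la}(M) = 0$ for $i \geq 1$'' into the statement about a genuine quasi-isomorphism of the two cohomology-computing complexes — that is, setting up the correct double complex (group cochains of $G$ with values in $\mathcal{C}^{\la}(G_0,M)$, colimited over $G_0$) and checking the topological hypotheses (strictness, completeness of the inductive-limit topology on $\mathcal{C}^{\la}(G_0,M)$) needed for the spectral sequence to converge and for the open mapping theorem to apply. Once that formalism is in place, the degeneration and the identification of the edge map with the natural inclusion are routine, and specializing to $G \cong \ZZ_p$ (where only $E_2^{0,j}$ and $E_2^{1,j}$ are nonzero) gives the stated isomorphisms with no further work.
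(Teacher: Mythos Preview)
Your first approach assumes the spectral sequence $E_2^{i,j} = \mathrm{H}^i(G,\R^j_{\la}(M)) \Rightarrow \mathrm{H}^{i+j}(G,M)$, but in this mixed-characteristic setting that is precisely \Cref{conj:spectral_seq}, stated as a conjecture and \emph{not} proven in the paper. The analogue over $\QQ_p$ (as in \cite{RJRC22}) uses features of locally analytic representations that are unavailable here, so you cannot invoke it as ``standard''. This is a genuine gap, not just bookkeeping.

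Your second approach is closer to what the paper does, but the paper's argument is sharper and avoids the vague steps you flag. Rather than comparing $M^{\la}/(\gamma-1)$ with $M/(\gamma-1)$ directly, the paper applies the long exact sequence of $\R^i_{\la}$ from $\S 2.3$ to the short exact sequence
\[
0 \to M/M^{G} \xrightarrow{\gamma-1} M \to \mathrm{H}^1(G,M) \to 0,
\]
treating $\mathrm{H}^1(G,M)$ as a trivial $G$-module. Two auxiliary vanishings are then needed: $\R^1_{\la}(M/M^G)=0$, which follows from $\R^1_{\la}(M)=0$ (the theorem) and $\R^2_{\la}(M^G)=0$ (cohomological dimension of $\ZZ_p$); and $\R^1_{\la}(M^G)=0$, which is used to identify $(M/M^G)^{\la}$ with $M^{\la}/M^G$. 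This last vanishing does \emph{not} come from \Cref{main_thm}, since $M^G$ is not a finite free $\WLambda$-module; the paper proves it by an explicit computation, showing that the shift action of $\gamma-1$ on $\mathcal{C}^{\la}(\Gamma,M^G)$ is surjective via the Mahler expansion $(\gamma-1)\sum a_n\binom{x}{n} = \sum a_n\binom{x}{n-1}$. Your sketch of injectivity (``$\R^1_{\la}$ says $n$ may be taken locally analytic'') implicitly relies on exactly this step without identifying it, and your surjectivity argument never lands anywhere concrete. Once these two vanishings are in hand, the long exact sequence immediately gives $\mathrm{H}^1(G,M) \cong M^{\la}/(\gamma-1)M^{\la} = \mathrm{H}^1(G,M^{\la})$.
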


\begin{proof}
As $\Gamma \cong \ZZ_p$ we know that $\mathrm{H}^{i}(G,M) = \mathrm{H}^{i}(G,M^{\la}) = 0$ for $i \geq 2$ (see V, 2.2.3.3 of \cite{La65}), while the equality for $i=0$ is obvious. So the only nontrivial case is $i=1$. Choosing $\gamma$ for a generator of $G$, we have an exact sequence
$$0\rightarrow M/\mathrm{H}^{0}(G,M) \xrightarrow{\gamma-1} M \rightarrow \mathrm{H}^{1}(G,M)\rightarrow 0.$$

Taking locally analytic vectors (regarding $\mathrm{H}^{0}(G,M)$ and $\mathrm{H}^{1}(G,M)$ as having a trivial $G$-action) we obtain an exact sequence
$$0\rightarrow (M/\mathrm{H}^{0}(G,M))^{\la} \xrightarrow{\gamma-1} M^{\la} \rightarrow \mathrm{H}^{1}(G,M)\rightarrow \R_{\la}^1(M/\mathrm{H}^{0}(G,M)).$$
First, we claim $\R_{\la}^1(M/\mathrm{H}^{0}(G,M))=0$. This is because there is an exact sequence $$\R_{\la}^1(M) \rightarrow \R_{\la}^1(M/\mathrm{H}^{0}(G,M)) \rightarrow \R_{\la}^2(\mathrm{H}^{0}(G,M))$$ and both of its outer terms vanish (the $\R_{\la}^1$ term because of the theorem, the $\R_{\la}^2$ term because $\Gamma \cong \ZZ_p$). 

On the other hand, we claim that $(M/\mathrm{H}^{0}(G,M))^\la = M^{\la}/\mathrm{H}^{0}(G,M).$ This reduces to showing $\R^1_{\la}(\mathrm{H}^{0}(G,M)) = 0$; for this vanishing we need to show that $\gamma-1$ acts surjectively on $\mathcal{C}^{\la}(\Gamma,\mathrm{H}^{0}(G,M))$. Now if $\sum_n a_n \binom{x}{n} \in \mathcal{C}^{\la}(\Gamma,\mathrm{H}^{0}(G,M))$  with the coordinate $x$ corresponding to $\gamma$, then $(\gamma-1)(\sum_n a_n \binom{x}{n}) = \sum_n a_n \binom{x}{n-1}$, from which the surjectivity is clearly seen.

Putting the two claims together, we obtain an exact sequence
$$ 0\rightarrow M^{\la}/\mathrm{H}^{0}(G,M) \xrightarrow{\gamma-1} M^{\la} \rightarrow \mathrm{H}^{1}(G,M)\rightarrow 0$$

which shows $\mathrm{H}^{0}(G,M)$ is the cokernel of $M^{\la}\xrightarrow{\gamma-1} M^{\la}$, that is to say naturally isomorphic to $\mathrm{H}^{1}(G,M^{\la})$, as required.
\end{proof}

We end this subsection with a few natural conjectures. 

\begin{conjecture}
\label{conj:decompletion} The statement of Theorem \ref{main_thm} holds for any compact p-adic Lie group $G$ (with no assumption on $\mathrm{Lie}(G)$).
\end{conjecture}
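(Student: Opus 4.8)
The statement to prove is Conjecture~\ref{conj:decompletion}, so strictly speaking one cannot hand in a complete proof; what I can offer is the strategy I would pursue to remove the abelian hypothesis from Theorem~\ref{main_thm}.

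\medskip

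\textbf{Overall approach.} The plan is to reduce the general $p$-adic Lie group case to the abelian case already proved, using a dévissage along a composition series of $G_0$ by normal subgroups with procyclic quotients. Since $G_0$ is uniform of dimension $d$, the lower $p$-series gives a filtration $G_0 = H_0 \supset H_1 \supset \dots \supset H_d = \{1\}$ with each $H_{j}/H_{j+1} \cong \ZZ_p$ and each $H_j$ normal in $G_0$. Working one quotient at a time, one would like a Hochschild--Serre type spectral sequence relating $\R^i_{\la}$ for $G_0$ to that of $H_1$ (an open subgroup, to which Lemma~\ref{lem:analyticity_group_functoriality} applies) and that of the procyclic quotient $G_0/H_1$ acting on $M^{H_1\hla}$. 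Concretely: first I would establish that $M^{H_1\hla}$ is again a finite free module over $\WLambda^{H_1\hla}$ with the same Tate--Sen formalism inherited (using that $H_1$ is itself uniform and that the abelian-free statement is not what we need here — rather, that $H_1$-locally analytic vectors already form a well-behaved decompletion by the appropriate inductive hypothesis on dimension), and then apply the one-dimensional statement (Corollary~\ref{cor:coh_comparison_dim_1}, suitably generalized) to the residual $\ZZ_p$-action of $G_0/H_1$.

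\medskip

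\textbf{Key steps, in order.} (1) Set up the induction on $d = \dim G$; the base case $d = 1$ is exactly the abelian case of Theorem~\ref{main_thm}. (2) For the inductive step, fix a normal uniform subgroup $H \trianglelefteq G_0$ with $G_0/H \cong \ZZ_p$; show that $M^{H\hla}$ is finite free over $R' := \WLambda^{H\hla}$, that the pair $(\WLambda^{H\hla}, (\WLambda^{H\hla})^+)$ again satisfies (TS1)--(TS4), and that $G_0/H$ acts on it semilinearly by isometries — this is where one transports the Tate--Sen axioms through a locally-analytic-vectors operation, and it is the analogue of the descent already implicit in the proof of Theorem~\ref{main_thm}. (3) Prove a Hochschild--Serre spectral sequence $\mathrm{H}^i(G_0/H, \R^j_{\la, H}(M)) \Rightarrow \R^{i+j}_{\la, G_0}(M)$ at the level of the function spaces $\mathcal{C}^{\la}(G_0, M)$, using that $\mathcal{C}^{\la}(G_0,M) \cong \mathcal{C}^{\la}(G_0/H, \mathcal{C}^{\la}(H,M))$ up to cofinality (an Amice/Mahler factorization in mixed characteristic, provable by the binomial-coefficient manipulations of $\mathsection 2.1$). (4) By the inductive hypothesis $\R^j_{\la,H}(M) = 0$ for $j \geq 1$ and $\WLambda \otimes M^{H\hla}$-descent holds; by the base case applied to the residual $\ZZ_p = G_0/H$ acting on $M^{H\hla}$, the remaining row $\mathrm{H}^i(G_0/H, M^{H\hla})$-part also collapses for $i \geq 1$, and the edge map gives the isomorphism $\WLambda \otimes_{\WLambda^{\la}} M^{\la} \xrightarrow{\sim} M$. (5) Finally pass to the colimit over $G_0$ to get the statement for $M^{\la}$ and $\R^i_{\la}(M)$.

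\medskip

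\textbf{Main obstacle.} The crux is step (2) together with the cofinality statement in step (3): one must know that applying $M \mapsto M^{H\hla}$ to a finite free Tate--Sen module again produces a finite free module over a ring satisfying the Tate--Sen axioms, \emph{and} that the locally analytic functions on $G_0$ valued in $M$ decompose, up to cofinal rescaling of the radii $\lambda$, as locally analytic functions on the quotient valued in locally analytic functions on the normal subgroup. In characteristic $0$ this factorization is standard (it underlies the usual proof that $W^{\la}$ for a product group is computed stepwise), but in the mixed-characteristic, binomial-expansion setting the interaction between the two sets of $\binom{x}{n}$-coefficients and the non-commutativity of $G_0$ (conjugation acts on the coordinates of $H$, cf.\ the isometry argument after Definition~\ref{ref:loc_analytic}) makes the cofinality bookkeeping delicate — controlling how the analyticity radius degrades under conjugation by a topological generator of $G_0/H$ is exactly the place where the abelian hypothesis was being used, and removing it will require a quantitative version of Lemma~\ref{lem:val_bound_gen_elem} for the adjoint action rather than just for changes of basis.
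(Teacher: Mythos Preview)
The paper offers no proof of Conjecture~\ref{conj:decompletion}; it is stated as an open problem immediately after Corollary~\ref{cor:coh_comparison_dim_1} and no argument toward it is given. You correctly recognise this and submit a strategy rather than a proof, so there is nothing in the paper to compare your approach against.

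That said, your d\'evissage has a structural obstruction that precedes the cofinality bookkeeping you flag at the end. The very first step of your induction requires a closed normal subgroup $H \trianglelefteq G_0$ with $G_0/H \cong \ZZ_p$, i.e.\ a continuous surjection $G_0 \twoheadrightarrow \ZZ_p$. Such a surjection exists only when the topological abelianisation $G_0^{\mathrm{ab}}$ is infinite. If $\Lie(G)$ is perfect---for instance $G_0$ a small uniform open subgroup of $SL_2(\ZZ_p)$---then the closure of $[G_0,G_0]$ is open in $G_0$, the abelianisation $G_0^{\mathrm{ab}}$ is finite, and no such $H$ exists; your induction cannot even begin for precisely the groups furthest from the abelian case. (Incidentally, the lower $p$-series does not give the filtration you describe: its successive quotients are elementary abelian of rank $d$, not $\ZZ_p$.) A d\'evissage along normal subgroups with abelian quotients will therefore at best reduce the conjecture to the case where $\Lie(G)$ is semisimple, and a genuinely different argument---presumably a direct extension of the analysis in \S3.3--3.5 replacing the product decomposition of Proposition~\ref{prop:D_n_product_decomposition} by a non-split Hochschild--Serre computation---would still be required there.
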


The following is an analogy of Theorem 1.5 of \cite{RJRC22}. 

\begin{conjecture}
\label{conj:spectral_seq} There exists a spectral sequence $$E_2^{i,j}=\mathrm{H}^i(G,\R_{\la}^j(M)) \Rightarrow \mathrm{H}^{i+j}(G,M).$$  
\end{conjecture}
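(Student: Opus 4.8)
The approach I would take is to realise the conjectural spectral sequence as the Grothendieck spectral sequence of a composition of two functors, in the spirit of the Hochschild--Serre spectral sequence and in exact analogy with Theorem 1.5 of \cite{RJRC22}. The key formal observation is that $(M^{\la})^{G} = M^{G}$: the inclusion $\supseteq$ holds because any $G$-invariant vector has constant orbit map, which is manifestly locally analytic, while $\subseteq$ is trivial. Hence $M \mapsto H^{\bullet}(G,M)$ is computed by the derived functor of the composite $M \mapsto (M^{\la})^{G}$, where $M \mapsto M^{\la}$ has right derived functors $\R^{j}_{\la}$ (this is the content of the long exact sequence in $\mathsection{2.3}$) and $N \mapsto N^{G}$ has right derived functors $H^{i}(G,-)$. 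Granting the two hypotheses of Grothendieck's theorem — enough injectives in the ambient category, and the statement that $M \mapsto M^{\la}$ sends injectives to objects with vanishing higher continuous $G$-cohomology — one obtains precisely $E_{2}^{i,j} = H^{i}(G,\R^{j}_{\la}(M)) \Rightarrow H^{i+j}(G,M)$.

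Concretely, the plan would have four steps. First, fix a homological framework in which all of this makes sense: one should work in the quasi-abelian (or solid) category of $\WLambda$-semilinear $G$-representations on modules of the type considered in $\mathsection{3.1}$, following \cite{KL11} and \cite{RJRC22}, so that $\R\Gamma(G,-)$ and $\R\Gamma_{\la}(-) := \varinjlim_{G_{0}} \R\Gamma_{\mathrm{cts}}(G_{0},\mathcal{C}^{\la}(G_{0},-))$ are defined and the long exact sequences of $\mathsection{2.3}$ upgrade to exact triangles. Second, construct the natural morphism $\R\Gamma_{\la}(M) \to M$ refining $M^{\la}\hookrightarrow M$, together with a residual $G$-action on $\R\Gamma_{\la}(M)$ — this exists for the same reasons $M^{\la}$ carries one, namely the normality of $G_{0}$ in $G$ and the coordinate-independence of the analytic structure (Lemma \ref{lem:analytic_cofinality}). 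Third — the heart of the matter — prove the comparison $\R\Gamma(G,\R\Gamma_{\la}(M)) \xrightarrow{\ \sim\ } \R\Gamma(G,M)$. Fourth, read off the spectral sequence as the second hypercohomology spectral sequence of the left-hand side. As a sanity check, when $\mathrm{Lie}(G)$ is abelian and $(\WLambda,\WLambda^{+})$ satisfies the Tate--Sen axioms, Theorem \ref{main_thm} gives $\R^{j}_{\la}(M) = 0$ for $j \geq 1$, so the spectral sequence would collapse to $H^{i}(G,M^{\la}) \cong H^{i}(G,M)$, recovering and generalizing Corollary \ref{cor:coh_comparison_dim_1}.

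The main obstacle is the comparison isomorphism $\R\Gamma(G,\R\Gamma_{\la}(M)) \simeq \R\Gamma(G,M)$ (equivalently, the acyclicity hypothesis in Grothendieck's theorem): morally it asserts that passing to locally analytic vectors does not alter continuous $G$-cohomology. In the classical $\QQ_p$-Banach setting such statements follow from Lazard's comparison of continuous, locally analytic and Lie algebra cohomology, but here no Lie algebra action is available (cf. Example \ref{example:loc_analytic_elems}), so a substitute is needed. A plausible route is to use the co-induced resolution $M \to \mathcal{C}^{0}(G_{0}^{\bullet+1},M)$ computing $\R\Gamma(G_{0},M)$ alongside a locally analytic variant $M \to \mathcal{C}^{\la}(G_{0}^{\bullet+1},M)$, and to show by a Fubini-type interchange of the two cosimplicial directions — controlled by the Mahler-type estimates of $\mathsection{2.1}$ — that the two associated double complexes have the same total cohomology, before passing to the colimit over $G_{0}$. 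Throughout, one has to check that the inductive topologies, the colimit over uniform subgroups, and the open mapping theorem (Theorem 2.2.8 of \cite{KL11}) interact well enough to make the construction functorial and the spectral sequence convergent; it is these topological subtleties, rather than any single cohomological computation, where the real work lies.
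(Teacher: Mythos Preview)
The statement you are attempting to prove is \emph{Conjecture} \ref{conj:spectral_seq} in the paper: the author explicitly presents it as one of ``a few natural conjectures'' at the end of $\mathsection{3.1}$, notes only that it is ``an analogy of Theorem 1.5 of \cite{RJRC22}'', and gives no proof. So there is no argument in the paper to compare your proposal against; you are not reconstructing a proof but proposing one for an open problem.

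That said, your strategy is the natural one, and you are honest about where it is incomplete. The identification $(M^{\la})^G = M^G$ is correct, and the plan to realise the spectral sequence as a hypercohomology/Grothendieck spectral sequence for the composite $(-)^G \circ (-)^{\la}$ is exactly what the reference to \cite{RJRC22} suggests. Your ``main obstacle'' paragraph is accurate: the crux is the comparison $\R\Gamma(G,\R\Gamma_{\la}(M)) \simeq \R\Gamma(G,M)$, and in the $\QQ_p$-Banach setting this ultimately rests on the fact that $\mathcal{C}^{\la}(G_0,M)$ (or the relevant injectives) has vanishing higher continuous $G_0$-cohomology, which in \cite{RJRC22} and its antecedents is handled via Lazard-type input. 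As you note, Example \ref{example:loc_analytic_elems}.1 explains why that input is unavailable here, and nothing in the paper supplies a substitute. Two further technical points deserve flagging: first, it is not clear that the category in which you want to work has enough injectives, nor that $\R^{j}_{\la}$ as defined in $\mathsection{2.3}$ (a colimit of continuous group cohomologies) is literally the derived functor of $M \mapsto M^{\la}$ in that category rather than merely fitting into long exact sequences; second, the interchange of the colimit over $G_0,\lambda$ with the cosimplicial resolution is delicate precisely because of Warning \ref{warning: c_smalness}. In short: your outline is the right shape, but what remains is the substance of the conjecture, not a routine verification.
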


Finally, we have a conjecture regarding cohomology which follows from the previous two conjectures.

\begin{conjecture}
\label{conj:cohomology} Suppose $\WLambda$ satisfies the Tate-Sen axioms (TS1)-(TS4) (see $\mathsection{3.2}$ below), and that $G$ is a compact $p$-adic Lie group. Then there exist natural isomorphisms $$\mathrm{H}^{i}(G,M^{\la}) \cong \mathrm{H}^{i}(G,M)$$ for $ i\geq 0$. 
\end{conjecture}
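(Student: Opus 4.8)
\textbf{Proof proposal for Conjecture \ref{conj:cohomology}.}

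The plan is to deduce Conjecture \ref{conj:cohomology} formally from Conjectures \ref{conj:decompletion} and \ref{conj:spectral_seq}, using the same mechanism as in the proof of Corollary \ref{cor:coh_comparison_dim_1} but now with the spectral sequence doing the bookkeeping that a single coboundary did in the rank-one case. Granting Conjecture \ref{conj:decompletion}, the derived locally analytic vectors $\R_{\la}^j(M)$ vanish for $j \geq 1$, so the spectral sequence of Conjecture \ref{conj:spectral_seq} degenerates: the only nonzero row is $j = 0$, giving $E_2^{i,0} = \mathrm{H}^i(G,\R_{\la}^0(M)) = \mathrm{H}^i(G,M^{\la})$ and hence edge isomorphisms $\mathrm{H}^i(G,M^{\la}) \xrightarrow{\sim} \mathrm{H}^i(G,M)$ for all $i \geq 0$. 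So the content is entirely in establishing the two input conjectures; Conjecture \ref{conj:cohomology} itself is then a one-line consequence.

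First I would try to remove the hypothesis that $\mathrm{Lie}(G)$ is abelian from Theorem \ref{main_thm}, i.e. prove Conjecture \ref{conj:decompletion}. The natural route is dévissage on a composition series of $G$: pick a central filtration of $G$ by closed normal subgroups with successive quotients isomorphic to $\ZZ_p$ (possible since $G$ is a compact $p$-adic Lie group, after passing to an open uniform subgroup and using Lemma \ref{lem:analyticity_group_functoriality} to see the statement is insensitive to this shrinking). For a normal $N \trianglelefteq G$ with $G/N$ one-dimensional, one would want a transitivity statement of the form: $M^{\la}$ computed for $G$ equals $(M^{N\text{-}\la})^{(G/N)\text{-}\la}$, together with a Grothendieck-style composite-functor spectral sequence $\R^i_{(G/N)\text{-}\la}\bigl(\R^j_{N\text{-}\la}(M)\bigr) \Rightarrow \R^{i+j}_{\la}(M)$. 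Applying the abelian (indeed rank-one) case of Theorem \ref{main_thm} to the $N$-action shows $\R^j_{N\text{-}\la}(M) = 0$ for $j \geq 1$ and that $M^{N\text{-}\la}$ is again finite free over $\WLambda^{N\text{-}\la}$, which one must check still satisfies suitable Tate--Sen axioms; then induction on $\dim G$ handles the $(G/N)$-action. The technical heart — and the step I expect to be the main obstacle — is proving that $\WLambda^{N\text{-}\la}$ (equivalently, the decompleted coefficient ring) inherits the Tate--Sen formalism, since the Tate--Sen axioms involve the existence of good trace/projector maps and normic estimates that are not obviously preserved under passing to locally analytic vectors; controlling this requires the binomial-expansion estimates of $\mathsection 2.1$ (Lemma \ref{lem:make_funcs_smaller_by_enlargning_l} and Corollary \ref{cor:making_lambda_smaller}) uniformly in the group direction.

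For Conjecture \ref{conj:spectral_seq}, the plan is to realize $\mathrm{H}^{\bullet}(G,M)$ and $\R^{\bullet}_{\la}(M)$ as cohomology of an explicit double complex. One writes $M \simeq \mathcal{C}^0(G_0, M)^{G_0}$-style resolutions and uses that $\mathcal{C}^{\la}(G_0,M) \hookrightarrow \mathcal{C}^0(G_0,M)$ is a $G_0$-equivariant inclusion with locally analytic vectors as the colimit; the spectral sequence should come from the filtration by analyticity radius $\lambda$ on a Koszul-type complex computing continuous cochains, exactly as in Theorem 1.5 of \cite{RJRC22}, whose proof should transpose once the open mapping theorem (Theorem 2.2.8 of \cite{KL11}) is invoked to guarantee strictness of all the transition maps — the same input already used in $\mathsection 2.3$ to get the long exact sequence for $\R^i_{\la}$. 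The remaining care is convergence of the spectral sequence, which follows from the cohomological dimension of $G$ being finite (bounded by $\dim G$) so that only finitely many columns are nonzero.
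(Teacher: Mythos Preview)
The paper does not prove this statement; it is explicitly left as a conjecture, with the remark that it ``follows from the previous two conjectures'' (Conjectures \ref{conj:decompletion} and \ref{conj:spectral_seq}). Your deduction via degeneration of the spectral sequence is exactly the intended argument and is correct: granting $\R^j_{\la}(M)=0$ for $j\geq 1$, the $E_2$-page of the conjectural spectral sequence collapses to the row $j=0$, giving the desired isomorphisms. So on the level of what the paper actually claims, your proposal matches it precisely.

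Where you go beyond the paper is in sketching approaches to the two input conjectures, which the paper leaves open. One remark there: your d\'evissage strategy for Conjecture \ref{conj:decompletion} posits a central filtration of (an open uniform subgroup of) $G$ with successive quotients $\ZZ_p$, claiming this is ``possible since $G$ is a compact $p$-adic Lie group''. That is not true in general --- such a filtration exists only when $\mathrm{Lie}(G)$ is nilpotent; for instance an open pro-$p$ subgroup of $\mathrm{SL}_2(\ZZ_p)$ has simple Lie algebra and admits no nontrivial closed normal subgroup with abelian quotient at the Lie-algebra level. So as written your induction would at best extend Theorem \ref{main_thm} from the abelian to the nilpotent case. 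You also correctly flag the other serious obstacle, namely that $\WLambda^{N\hla}$ is not obviously again a Tate--Sen ring; this is genuinely unresolved.
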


\subsection{The Tate-Sen method.}

In this section we recall the Tate-Sen method introduced in \cite{BC08} as well as some extensions of it introduced in \cite{Po22b} and \cite{Po24}.

Let $\WLambda$ and $G$ be as in $\mathsection{3.1}$. We let $G_0$ be an open and normal uniform subgroup of $G$ (such a subgroup always exists by Corollary 4.3 of \cite{DdMS03}). We suppose $G_0$ is endowed with a continuous character $\chi : G_0 \rightarrow \ZZ_p^\times$ with open image and let $H_0 = \ker \chi$.
If $g \in G_0$, let $n(g) = \val_p(\chi(g)-1) \in \ZZ$. For $G_0'$ an open
subgroup of $G_0$, set $H' = G_0' \cap H_0$. Let $G_{H'}$ be the normalizer of $H'$ in $G_0$.
Note $G_{H'}$ is open in $G_0$ since $G'_0 \subset G_{H'}$. Finally let $\widetilde{\Gamma}_{H'} = G_{H'}/H'$ and write
$C_{H'}$ for the center of $\widetilde{\Gamma}_{H'}$. By Lemma 3.1.1 of \cite{BC08}  the group $C_{H'}$ is open in
$\widetilde{\Gamma}_{H'}$. Let $n_1(H')$ be the smallest positive integer such that $\chi(C_{H'})$ contains $1+p^n\ZZ_p$.

The Tate-Sen axioms are the following:

\textbf{(TS1)} There exists $c_1 > 0$ such that for each pair $H_1 \subset H_2$ of open subgroups
of $H_0$ there exists $\alpha \in \WLambda^{H_1}$ such that $\val(\alpha) > -c_1$ and $\sum_{\tau \in H_2/H_1} \tau(\alpha) = 1$.

\textbf{(TS2)} There exists $c_2 > 0$ and for each open subgroup $H$ of $H_0$ an integer
$n(H)$, as well as an increasing sequence $(\Lambda_{H,n})_ {n\geq n(H)}$ of closed subalgebras
of $\WLambda^{H}$, each containing $\varpi^{\pm1}$, and $\Lambda_{H,n}$-linear maps $R_{H,n} : \WLambda^{H} \rightarrow \Lambda_{H,n}$
such that

(1) If $H_1 \subset H_2$ then $\Lambda_{H_2,n} \subset \Lambda_{H_1,n}$ and $R_{H_1,n}|_{\WLambda^{H_2}} = R_{H_2,n}$.

(2) $R_{H,n}(x) = x$ if $x \in \Lambda_{H,n}$.

(3) $g(\Lambda_{H,n}) = \Lambda_{gHg^{-1},n}$ and 
$g(R_{H,n}(x)) = R_{gHg^{-1}}(gx)$ if $g \in G_0$.

(4) If $n \geq n(H)$ and if $x \in \WLambda^{H}$ then $\val(R_{H,n}(x)) \geq \val(x) - c_2$.

(5) If $x \in \WLambda^{H}$ then $\lim_{n\rightarrow \infty}(R_{H,n}(x)).$

\textbf{(TS3)} There exists $c_3 > 0$ and, for each open subgroup $G'$ of $G_0$ an integer
$n(G') \geq n_1(H')$ where $H' = G' \cap H_0$, such that if $n(\gamma) \leq n \leq n(G')$ for $\gamma \in \widetilde{\Gamma}_{H'}$ then $\gamma-1$ is invertible on $X_{H',n} = (1 - R_{H',n})(\WLambda^{H'})$ and $\val((\gamma - 1)^{-1}(x)) \geq \val(x)-c_3.$

\textbf{(TS4)} For any sufficiently small open $G' \subset G_0$  and $n \geq n(G')$, there exists a positive real number $t = t(G', n) > 0$ such that if $\gamma \in G'$ and $x \in \Lambda_{H',n}$ then $\val((\gamma - 1)(x)) \geq \val(x) + t$. 

Note that with the exception of (TS4), this is exactly the setting of $\mathsection{4.1}$ of \cite{Po22b}, except that we do not assume $\chi$ is defined on the entire group. Also, we have slightly switched notation: our $H',G'$ are denoted there by $H,G$. We apologize for the possible confusion. We now recall some definitions and results from $\mathsection{4.4}$ of loc. cit. which are in turn a small variant of the content appearing in $\mathsection{3}$ of \cite{BC08}.

We have the following two additive tensor categories, where $G'$ is an open subgroup of $G_0$:

(1). $\Mod^{G_0}_{\WLambda^+}(G')$, the category of 
finite free $\WLambda^+$-semilinear representations
of $G_0$ such that for some basis $\val(\Mat(\gamma)-1) > c_1+2c_2+2c_3$ for $\gamma \in G'$.

(2). $\Mod^{G_0}_{\Lambda_{H',n}^+}(G')$, the category of finite free $\Lambda_{H',n}^+$-semilinear representations of $G_0$ that are fixed by $H' = G'\cap H_0$ and which have a $c_3$-fixed basis for the $G'$ action, i.e. for some basis $\val(\mathrm{Mat}(\gamma)-I)>c_3$ for $\gamma \in G'.$

\begin{definition}
Let $M^+ \in \Mod^{G_0}_{\WLambda^+}(G')$. We let $D_{H',n}^+(M^+)$ be the union of all finitely generated $\Lambda^+_{H',n}$-submodules
of $M^+$ which are stable by $G_0$, fixed by $H'$ and which are generated by a $c_3$-fixed set of generators. We let $D_{H',n}(M) = D_{H',n}^+(M^+)[1/\varpi]$.
\end{definition}

The following is Proposition 4.9 of \cite{Po22b}. It uses only the axioms (TS1)-(TS3).

\begin{proposition}
\label{prop:dn_decompletion}
Let $n \geq n(G').$
 The association $M^+\mapsto D_{H',n}^+(M^+)$ gives an equivalence of categories from $\Mod^{G_0}_{\Lambda_{H',n}^+}(G')$ to $\Mod^{G_0}_{\Lambda_{H',n}^+}(G')$. The inverse functor is given by $D^+\mapsto \WLambda^+\otimes_{\Lambda_{H',n}^+}D^+.$
\end{proposition}

A few words on this proposition are in order. This kind of Tate-Sen decompletion result has a long history going back to Tate's paper on $p$-divisible groups (see $\mathsection{3}$ of \cite{Ta67} for the first ancestor of these ideas). Roughly speaking, this equivalence should be viewed as a form of descent from representations where the action of $\gamma-1$ is sufficiently contracting to analytic representations. It has also been understood to have connection with $p$-adic Simpson theory, see for example Theorem 3.4 of \cite{Wa23}. We should note that when $p$ is topologically nilpotent in $\WLambda$, i.e. in characteristic zero, it appears implicitly in $\mathsection{3}$ of \cite{BC08}, see also $\mathsection{5}$C of \cite{Po24} where it was spelled out. In \cite{Po22b} it was generalized to allow for $\ZZ_p$-Tate algebra coefficients.

\begin{lemma}
\label{lem:D_n_tensor_prod}
Let $N^+, M^+ \in \Mod^{G_0}_{\WLambda^+}(G')$. Then $N^+ \otimes_{\WLambda^+} M^+$ lies in $\Mod^{G_0}_{\WLambda^+}(G')$ and $D^+_{H',n}(M^+) \otimes_{\Lambda^+_{H',n}} D^+_{H',n}(M^+) = D^+_{H',n}(M^+ \otimes_{\WLambda^+} N^+)$.
\end{lemma}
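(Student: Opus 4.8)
The plan is to reduce the statement about $D^+_{H',n}$ to the abstract descent isomorphism $\WLambda^+ \otimes_{\Lambda^+_{H',n}} D^+_{H',n}(M^+) \xrightarrow{\cong} M^+$ already recalled above, together with the functoriality properties of the construction in $\mathsection{4.4}$ of \cite{Po22a}. First I would check that $N^+ \otimes_{\WLambda^+} M^+$ lies in $\Mod^{G'}_{\WLambda^+}(G_0)$: if $e_i$ is a basis of $M^+$ with $\val(\Mat(\gamma)-1) > c_1+2c_2+2c_3$ and $f_j$ such a basis of $N^+$, then $\{e_i \otimes f_j\}$ is a basis of the tensor product, and writing $\gamma(e_i \otimes f_j) = \gamma(e_i)\otimes\gamma(f_j)$ one expands $\Mat_{e\otimes f}(\gamma) - 1 = (\Mat_e(\gamma)-1)\otimes \Mat_f(\gamma) + 1\otimes(\Mat_f(\gamma)-1)$; since $\gamma$ acts by isometries each factor has valuation $\geq 0$, so the whole expression has valuation $> c_1 + 2c_2 + 2c_3$. (One should double-check the topology: $N^+\otimes_{\WLambda^+}M^+$ is finite free, and the $\varpi$-adic topology is the natural one.)

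Next I would construct the natural map. Since $D^+_{H',n}(M^+) \subset M^+$ and $D^+_{H',n}(N^+)\subset N^+$ as the $\Lambda^+_{H',n}$-submodules picked out by the functor, there is an obvious $\Lambda^+_{H',n}$-bilinear map $D^+_{H',n}(N^+)\times D^+_{H',n}(M^+) \to N^+\otimes_{\WLambda^+}M^+$, landing in the $H'$-fixed, $c_3$-fixed-basis part, hence factoring through $D^+_{H',n}(N^+\otimes_{\WLambda^+}M^+)$; this gives the canonical arrow $D^+_{H',n}(N^+)\otimes_{\Lambda^+_{H',n}}D^+_{H',n}(M^+)\to D^+_{H',n}(N^+\otimes_{\WLambda^+}M^+)$, which is visibly compatible with the descent isomorphisms and the $G_0$-action. (Alternatively, one can invoke that the descent isomorphism identifies $D^+_{H',n}$ with base change along $\Lambda^+_{H',n}\to\WLambda^+$ of a specified section, giving the map formally; either way the map is forced.)

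Then I would prove it is an isomorphism. Apply $\WLambda^+\otimes_{\Lambda^+_{H',n}}(-)$ to the candidate map. On the source, $\WLambda^+\otimes_{\Lambda^+_{H',n}}\bigl(D^+_{H',n}(N^+)\otimes_{\Lambda^+_{H',n}}D^+_{H',n}(M^+)\bigr) \cong \bigl(\WLambda^+\otimes_{\Lambda^+_{H',n}}D^+_{H',n}(N^+)\bigr)\otimes_{\WLambda^+}\bigl(\WLambda^+\otimes_{\Lambda^+_{H',n}}D^+_{H',n}(M^+)\bigr) \cong N^+\otimes_{\WLambda^+}M^+$ using the descent isomorphism twice and associativity of tensor; on the target $\WLambda^+\otimes_{\Lambda^+_{H',n}}D^+_{H',n}(N^+\otimes_{\WLambda^+}M^+)\cong N^+\otimes_{\WLambda^+}M^+$ by the descent isomorphism for the tensor product, and under these identifications the map becomes the identity. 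Since $D^+_{H',n}(-)$ is finite free over $\Lambda^+_{H',n}$ and $\Lambda^+_{H',n}\to\WLambda^+$ is faithfully flat (as $\WLambda^+$ is a free $\Lambda^+_{H',n}$-module of rank equal to the $\WLambda^+$-rank times... — more precisely, $\WLambda^+\otimes_{\Lambda^+_{H',n}}D^+_{H',n}\cong M^+$ is faithfully flat over $\WLambda^+$ hence $\WLambda^+$ is faithfully flat over $\Lambda^+_{H',n}$ after a rank count, or one simply checks a map of finite free $\Lambda^+_{H',n}$-modules is an isomorphism iff it is so after the faithfully flat base change $\WLambda^+$), a map of finite free $\Lambda^+_{H',n}$-modules which becomes an isomorphism after applying $\WLambda^+\otimes_{\Lambda^+_{H',n}}(-)$ is itself an isomorphism.

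The main obstacle I anticipate is purely bookkeeping rather than conceptual: making sure the "natural map" is genuinely well-defined at the integral level and compatible with the $G_0$-equivariant descent isomorphisms of \cite{Po22a} (in particular that the image of $D^+_{H',n}(N^+)\otimes D^+_{H',n}(M^+)$ really lands inside $D^+_{H',n}$ of the tensor product, i.e.\ that it is $H'$-fixed and admits a $c_3$-fixed basis — the latter following from the computation in the first paragraph), and verifying the faithful flatness of $\Lambda^+_{H',n}\to\WLambda^+$ in the precise form needed. All of this is standard given the results recalled above, so I would keep the proof short.
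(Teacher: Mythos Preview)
The paper does not give a proof here; it simply refers to Corollary 5.13 of \cite{Po24}. Your outline is the standard argument, but there is one gap worth naming. Your third paragraph concludes by invoking faithful flatness of $\Lambda^+_{H',n}\to\WLambda^+$, and the parenthetical justification does not hold up: $\WLambda^+$ is certainly not a free $\Lambda^+_{H',n}$-module of finite rank, and the alternative you sketch is circular. The Tate--Sen data only provide a $\Lambda_{H',n}$-linear retraction $R_{H',n}$ on $\WLambda^{H'}$ (not on $\WLambda^+$), and that retraction satisfies only $\val(R_{H',n}(x))\geq\val(x)-c_2$, so it need not preserve the $+$-part; the direct-summand route to faithfulness is therefore not available integrally without further work.

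The clean fix is the one you already gesture at in your final paragraph: bypass flatness entirely and use the \emph{uniqueness} of the Tate--Sen descent (Proposition 3.3.1 of \cite{BC08}, or its analogue in \S4.4 of \cite{Po22a}). If $e_i$ and $f_j$ are the $c_3$-fixed $\Lambda^+_{H',n}$-bases of $D^+_{H',n}(N^+)$ and $D^+_{H',n}(M^+)$, then by the same matrix identity you used in your first paragraph the $e_i\otimes f_j$ form an $H'$-fixed, $c_3$-fixed family, and by the two individual descent isomorphisms they are already a $\WLambda^+$-basis of $N^+\otimes_{\WLambda^+}M^+$. Uniqueness then forces their $\Lambda^+_{H',n}$-span to equal $D^+_{H',n}(N^+\otimes_{\WLambda^+}M^+)$, which is the desired isomorphism and simultaneously settles your worry about where the natural map lands.
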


\begin{proof}
Suppose $\{e_i\}_{1\leq i \leq n}$ and $\{f_j\}_{1\leq j \leq m}$ are respective bases of $N^+$ and $M^+$ such that $\val(\mathrm{Mat}_{\{e_i\}}(\gamma)-1),\val(\mathrm{Mat}_{\{f_j\}}(\gamma)-1) > c_1+2c_2+2c_3$ for $\gamma \in G'.$ Using the identity
$$(\gamma-1)(e_i\otimes f_j)-e_i\otimes f_j=(\gamma-1)(e_i)\otimes\gamma(f_j)+e_i\otimes (\gamma-1)(f_j),$$
we see that $\{e_i \otimes f_j\}_{1\leq i \leq n, 1\leq j \leq m}$ is a basis of $N^+ \otimes_{\WLambda^+} M^+$ with $$\val(\mathrm{Mat}(\gamma)-1) > c_1+2c_2+2c_3.$$ Hence, $N^+ \otimes_{\WLambda^+} M^+$ lies in $\Mod^{G_0}_{\WLambda^+}(G')$. Next, we show that 
$$D^+_{H',n}(M^+) \otimes_{\Lambda^+_{H',n}} D^+_{H',n}(M^+) =D^+_{H',n}(M^+ \otimes_{\WLambda^+} N^+).$$
Both the left and right hand side are objects of $\Mod^{G_0}_{\Lambda_{H',n}^+}(G')$, so by virtue of Proposition \ref{prop:dn_decompletion} it is enough to show they are equal after tensoring with $\WLambda^+$. To show this, we compute
\begin{align*}
\WLambda^+\otimes_{\Lambda^+_{H',n}}(D^+_{H',n}(M^+) \otimes_{\Lambda^+_{H',n}} D^+_{H',n}(M^+)) &= M^+ \otimes_{\Lambda^+_{H',n}} D^+_{H',n}(M^+) \\ 
&=M^+ \otimes_{\WLambda^+}(\WLambda^+\otimes_{\Lambda^+_{H',n}} D^+_{H',n}(M^+))\\
&= M^+ \otimes_{\WLambda^+}N^+\\
&= \WLambda^+\otimes_{\Lambda^+_{H',n}}D^+_{H',n}(M^+ \otimes_{\WLambda^+} N^+).
\end{align*}
This completes the proof. \end{proof}


We have the following result which allows us to control the action of $\gamma-1$ on $D_{H',n}(M)$.

\begin{proposition}
\label{prop:action_uniformity} 
If (TS4) holds then there exists some element $s = s(G',n)> 0$, independent of $M$, so that for $\gamma \in G'$ and $x \in D_{H',n}(M)$ we have $\val((\gamma-1)(x)) \geq \val(x) + s$.
\end{proposition}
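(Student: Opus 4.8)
The plan is to reduce the statement about $D_{H',n}(M)$ for a general $M$ to the case of a "universal" representation, using the multiplicativity of the functor $D^+_{H',n}$ established in Lemma \ref{lem:D_n_tensor_prod}. First I would recall that $D^+_{H',n}(M^+)$ is a finite free $\Lambda^+_{H',n}$-module carrying a semilinear $\Gamma$-action with a $c_3$-fixed basis, so that for $\gamma \in G'$ the matrix $\Mat(\gamma)$ with respect to such a basis satisfies $\val(\Mat(\gamma) - 1) > c_3 > 0$. Write $\gamma(x) = \gamma_0(x) + (\Mat(\gamma)-1)\cdot\gamma_0(x)$ where $\gamma_0$ denotes the "diagonal" semilinear part, i.e. the coordinatewise application of $\gamma$ to the coefficients in the fixed basis; then $(\gamma-1)(x) = (\gamma_0 - 1)(x) + (\Mat(\gamma)-1)\gamma_0(x)$. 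By (TS4) applied to $\Lambda_{H',n}$, the first term has valuation $\geq \val(x) + t(G',n)$, and since $\gamma_0$ is an isometry on $D_{H',n}(M)$ (it acts coefficientwise by an isometry of $\WLambda$) and the entries of $\Mat(\gamma)-1$ lie in $\Lambda_{H',n}$ with valuation $> c_3$, the second term has valuation $\geq \val(x) + c_3$. Taking $s = s(G',n) := \min(t(G',n), c_3)$, which manifestly does not depend on $M$, gives the bound $\val((\gamma-1)(x)) \geq \val(x) + s$.

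**The point requiring care.** The subtlety is that a priori the constant "$s$" extracted this way could depend on the chosen basis of $D^+_{H',n}(M^+)$, and the definition of $\Mod^{G'}_{\Lambda^+_{H',n}}(G_0)$ only guarantees the \emph{existence} of a $c_3$-fixed basis rather than supplying a uniform one. However, the bound $\val(\Mat(\gamma)-1) > c_3$ for \emph{any} $c_3$-fixed basis is exactly what the category membership provides, and $t(G',n)$ in (TS4) depends only on $G'$ and $n$ (and the ambient ring $\WLambda$), not on $M$ — so the resulting $s$ is genuinely uniform in $M$. The only place one must be slightly careful is that $x \in D_{H',n}(M) = D^+_{H',n}(M^+)[1/\varpi]$ rather than in the lattice; but since $\val$ is $\ZZ$-valued up to scaling and both $\gamma_0 - 1$ and multiplication by entries of $\Mat(\gamma)-1$ commute with scaling by powers of $\varpi$, the valuation estimate passes from $D^+_{H',n}(M^+)$ to $D_{H',n}(M)$ without change.

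**Expected main obstacle.** I do not anticipate a serious obstacle here — the proposition is essentially a bookkeeping consequence of (TS4) together with the $c_3$-fixed basis property built into the target category of the functor $D^+_{H',n}$. If anything, the one thing to double-check is that the decomposition $(\gamma - 1) = (\gamma_0 - 1) + (\Mat(\gamma)-1)\gamma_0$ is being applied correctly for a \emph{semilinear} action (so that $\gamma_0$ really is an isometry and really does land back in the same module), and that Lemma \ref{lem:D_n_tensor_prod} is not actually needed for this particular statement — it will be needed later to propagate such uniform bounds through tensor constructions, but the bare estimate above follows directly. So the proof is short: invoke the $c_3$-fixed basis, split $\gamma - 1$ into its semilinear and matrix parts, estimate each using (TS4) and the category definition respectively, and set $s = \min(t(G',n), c_3)$.
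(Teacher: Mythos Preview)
Your argument is correct and matches the paper's proof: both take a $c_3$-fixed basis, split $(\gamma-1)$ into the coefficient part (controlled by (TS4)) and the basis part (controlled by the $c_3$-fixed condition), and set $s = \min(t(G',n), c_3)$; the paper phrases this via the Leibniz identity $(\gamma-1)(ab) = (\gamma-1)(a)b + \gamma(a)(\gamma-1)(b)$ rather than your matrix decomposition, but the content is identical. Your initial mention of Lemma~\ref{lem:D_n_tensor_prod} is indeed unnecessary, as you yourself note, and your remark about ``commuting with scaling by $\varpi$'' is slightly imprecise since $\varpi$ need not be $\gamma$-fixed, but your decomposition already applies directly to any $x \in D_{H',n}(M)$ so this does not affect the argument.
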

\begin{proof}
 We know that $D_{H',n}^+(M)^+$ has a basis of $c_3$-fixed elements. Using the identity 
$$(\gamma-1)(ab)=(\gamma-1)(a)b + \gamma(a)(\gamma-1)(b)$$
we see that $(\gamma-1)(D_{H',n}^+(M)^+) \subset \varpi^{s}D_{H',n}^+(M)^+$ for $s = \mathrm{min}(c_3,t)$ where $t = t(G',n)$ is as in (TS4). If $x \in \varpi^n\cdot D_{H',n}^+(M)^+$ then writing $x = \varpi^n y$ and using the same identity as before we conclude that $(\gamma-1)(x) \in \varpi^{n+s}\cdot D_{H',n}^+(M)^+.$ This concludes the proof.
\end{proof}

\begin{corollary}
\label{cor:descent_loc_analytic}
If (TS4) holds then $D_{H',n}(M) \subset M^{G'\hla}.$
\end{corollary}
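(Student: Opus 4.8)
The goal is to show that every element $x \in D_{H',n}(M)$ defines, via the orbit map $g \mapsto g(x)$, a locally analytic function on some uniform subgroup $G'$, so that $x \in M^{G'\hla}$. The plan is to combine the uniform decay estimate of Proposition \ref{prop:action_uniformity} with the version of Mahler's theorem and the analyticity criterion proved in Subsection 2.1. First I would fix the coordinate system: choose an ordered basis $g_1,\dots,g_d$ of $G'$ as in Subsection 2.2, so that shifts in $G'$ correspond to shifts on $\ZZ_p^d$, and consider the orbit function $f:\ZZ_p^d \to M$, $f(\un{x}) = c(\un{x})(x)$ where $c$ is the coordinate homeomorphism. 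Since $D_{H',n}(M)$ is $G'$-stable (it is a $\Lambda_{H',n}$-representation of $\wt{\Gamma}_{H'}$ fixed by $H'$, and $G' \subset G_{H'}$ maps onto a subgroup of $\wt\Gamma_{H'}$), the function $f$ is valued in $D_{H',n}(M)$ and is continuous, hence by Theorem \ref{thm:Mahler} it has a Mahler expansion $f(\un{x}) = \sum_{\un{n}} a_{\un{n}} \binom{\un{x}}{\un{n}}$ with $a_{\un{n}} = \Delta^{\un{n}}(f)(\un{0})$.

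The key step is to estimate $\val_M(a_{\un{n}})$. By Proposition \ref{prop:equiv_conds_analytic_func}, it suffices to bound $\val_M^{\op}(\Delta_i^{m}(f))$ for each $i$ and each $m = p^k$. Now $\Delta_i^{m}(f)(\un{x})$ is, by the binomial formula for $\Delta$, a $\ZZ$-linear combination of the values $f(\un{x} + \un{j})$ — equivalently, it is obtained by applying $(g_i - 1)^{m}$ (in the group-algebra sense) and translating; more precisely $\Delta_i^{p^k}(f)(\un{x}) = \big((g_i^{p^k}-1) \text{ applied iteratively}\big)$, and one checks that iterating $\Delta_i$ a total of $p^k$ times produces an operator that lands in $(g_i^{p^{k}}-1)$ up to lower-order correction terms each already divisible by the relevant power — this is exactly the kind of $p$-adic bookkeeping carried out in the proof of Theorem \ref{thm:Mahler} via the identity $\Delta^{p^t 1_k}(g)(\un{x}) = g(\un{x}+p^t 1_k) - g(\un{x}) + p\cdot y(\un{x})$. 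Feeding in Proposition \ref{prop:action_uniformity}, which gives $\val((g^{p^k}-1)(y)) \geq \val(y) + s(G',n) + $ (a gain growing with $k$, since $g^{p^k} \in G'_{k}$), one obtains a bound of the form $\val_M^{\op}(\Delta_i^{p^k}(f)) \geq s' p^k + \mu$ for suitable constants $s' > 0$, $\mu$, possibly after replacing $G'$ by a smaller uniform subgroup $G'_l$ so that the base-level decay constant dominates. This places $f \in \mathcal{C}^{\an\text{-}\lambda,\mu}(G'_l, M)$ for some $\lambda$, i.e. $x \in M^{G'_l\hla} \subset M^{\la}$; and by Lemma \ref{lem:analyticity_group_functoriality} one can track this back down (or note it suffices as stated).

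The main obstacle I anticipate is the passage from the multiplicative decay estimate of Proposition \ref{prop:action_uniformity} — which controls $(\gamma-1)(x)$ for a single $\gamma$ — to a uniform control of the iterated finite differences $\Delta_i^{p^k}$ with a decay constant that grows linearly in $p^k$ rather than merely staying positive. The subtlety is that $\Delta_i^{p^k}$ is not literally $(g_i - 1)^{p^k}$ on the nose: expanding the $p^k$-th iterate of $g_i \mapsto g_i$ one gets many cross terms, and one must verify that the binomial coefficients $\binom{p^k}{j}$ supply enough extra $p$-divisibility (via $\val_p\binom{p^k}{j} = k - \val_p(j)$) to combine with the per-step gain $s$ from Proposition \ref{prop:action_uniformity} and yield the claimed $\geq s' p^k + \mu$; this is precisely the mechanism already present in the second half of the proof of Theorem \ref{thm:Mahler} and in Lemma \ref{lem:make_funcs_smaller_by_enlargning_l}, so I would phrase the argument so as to invoke those lemmas rather than redo the computation, shrinking $G'$ if needed to absorb the finitely many constants $c_1, c_2, c_3, t$.
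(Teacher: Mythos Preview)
Your overall strategy is right: set up the orbit map $f(\un{x}) = c(\un{x})(m)$, take its Mahler expansion, and estimate the coefficients using Proposition~\ref{prop:action_uniformity}. But the ``main obstacle'' you describe is not there, and your detour through $g_i^{p^k}-1$, the Mahler-proof identity, and Lemma~\ref{lem:make_funcs_smaller_by_enlargning_l} is unnecessary. The paper's proof is one line: use the expansion $\gamma^{x}(m)=\sum_{n\geq0}(\gamma-1)^{n}(m)\binom{x}{n}$.

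The point you are missing is that on the orbit map, the finite-difference operator $\Delta_i$ \emph{is} the group-algebra operator $g_i-1$, exactly rather than approximately. Indeed
\[
\Delta_i^n(f)(\un{x})=\sum_{j=0}^{n}(-1)^{n-j}\binom{n}{j}f(\un{x}+j1_i)
= g_1^{x_1}\cdots g_{i-1}^{x_{i-1}} g_i^{x_i}\,(g_i-1)^n\bigl(g_{i+1}^{x_{i+1}}\cdots g_d^{x_d}(m)\bigr),
\]
since $c(\un{x}+j1_i)=g_1^{x_1}\cdots g_i^{x_i+j}\cdots g_d^{x_d}$. The inner element $g_{i+1}^{x_{i+1}}\cdots g_d^{x_d}(m)$ lies in $D_{H',n}(M)$ (which is $G'$-stable), so Proposition~\ref{prop:action_uniformity} applied $n$ times gives $\val\bigl((g_i-1)^n(\cdot)\bigr)\geq \val(m)+ns$; the outer group elements act by isometries. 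Hence $\val_M^{\op}(\Delta_i^n(f))\geq \val(m)+ns$, which is linear in $n$, and $f$ lands in $\mathcal{C}^{\lambda\han}(G',M)$ for any $\lambda$ with $p^\lambda<s$. No correction terms, no binomial bookkeeping, no shrinking of $G'$.

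Your confusion seems to be between $\Delta_i^{p^k}$ (the $p^k$-fold iterate of $\Delta_i$, which equals $(g_i-1)^{p^k}$ on the orbit) and $\Delta_{p^k 1_i}$ (a single shift by $p^k$, which corresponds to $g_i^{p^k}-1$). You want to control the former; it is the latter that would require the Vandermonde/Bojanic machinery you invoke.
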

\begin{proof}
Use the the expansion $\gamma^{x}(m)=\sum_{n\geq0}(\gamma-1)^{n}(m)\binom{x}{n}$ for $x\in \ZZ_p$.
\end{proof}

We are now in a position to prove the first part of our main theorem.

\textsl{Proof of part 1 of Theorem \ref{main_thm}}. For $G'$ small enough and $n$ large enough the descent to $D_{H',n}$ applies (this can always be achieved because of continuity). We need to show that the natural map
$$\WLambda \otimes_{{\Lambda}^{\la}} M^{\la} \rightarrow M$$
is an isomorphism. We shall show this map is an isomorphism for $G'$ small enough when $M^{\la}$ is replaced with $M^{G'\hla}$, which suffices (the transition maps along different $G'$'s are then forced to be isomorphisms). By Corollary \ref{cor:descent_loc_analytic} we have $D_{H',n} \subset M^{G'\hla}$ and we know $M$ descends to $D_{H',n}$, so the $\Lambda_{H',n}$-basis of $D_{H',n}$ gives a $\WLambda$-basis of $M$ consisting of $G'$-locally analytic elements. By Proposition \ref{prop:locally_analyic_basis} we have $M^{G' \hla} = \WLambda^{G'\hla} \otimes_{\Lambda_{H',n}} D_{H',n}$ and so the natural isomorphism $\WLambda \otimes_{\Lambda_{H',n}} D_{H',n} \xrightarrow{\cong} M$ is identified with the map $\WLambda \otimes_{{\Lambda}^{G'\hla}} M^{G'\hla} \rightarrow M$, which concludes the proof.

\subsection{Descending analytic functions}

From now on we suppose (TS1)-(TS4) hold and that $\mathrm{Lie}(G)$ is abelian. It follows that $G_0$ is abelian also. By Proposition \ref{prop:shifts_action} we have that $\mathcal{C}^{\lambda\han}(G_0,M)$ is a $G_0$-module for every $\lambda \in \RR$. Recall (Definition \ref{def:analytic_funcs}) that $\mathcal{C}^{\lambda\han}(G_0,M)$ is endowed with the valuation
$$\val_{\lambda}(f) = \inf_{\un{n}}(\val(a_{\un{n}}(f))- \lfloor p^\lambda \un{n} \rfloor).$$

We set
$$M_{\un{n},\lambda} = \{f \in \mathcal{C}^{\lambda\han}(G_0,M): \Delta^{\un{n}}(f) = 0\}$$ with its induced valuation. We now note a few properties of the $\widetilde{\Lambda}$-modules $M_{\un{n},\lambda}$. First, each $M_{\un{n},\lambda}$ is preserved under the $G_0$-action because $\Delta^{\un{n}}$ is $G_0$-equivariant, and so is actually a $G_0$-semilinear submodule of $\mathcal{C}^{\lambda\han}(G_0,M)$. We have $M_{\un{n},\lambda} \subset M_{\un{n'},\lambda}$ when $\un{n} \leq \un{n'}$. The limit
$\varinjlim_{\un{n}}{M_{\un{n},\lambda}} \subset \mathcal{C}^{\lambda\han}(G_0,M)$, indexed over $\un{n} 
\in \ZZ_{\geq 0}^d$, is dense in $\mathcal{C}^{\lambda\han}(G_0,M)$. Let $m_1, ..., m_{\mathrm{rank}(M)}$ be a $\WLambda^+$ basis of $M^+$, which we choose once and for all. One sees that $M_{\un{n},\lambda}$ is finite free over $\widetilde{\Lambda}$ with a basis given by the functions $\un{x}\mapsto m_i\varpi^{  \lfloor p^{\lambda}\un{k}\rfloor }\binom{\un{x}}{\un{k}}$ for $\un{k}\leq \un{n}$ and $1\leq i \leq \mathrm{rank}(M)$. Finally we note this implies $M_{\un{n},\lambda}\cdot M_{\un{n'},\lambda} \subset M_{\un{n}+\un{n'},\lambda}$ when $M=\WLambda$.

\begin{definition}
\label{def:c_small_pair}

1. A subgroup $G_0$ is called $c$-small if $\val(g-1)(\varpi) > c$ and if $\val(g-1)(m_i) \geq c$ for each basis element $m_i$. 

2. An element $\lambda \in \RR$ is called $c$-small if $\lambda > \log_p(c+1).$

3. We say a pair $(G_0,\lambda)$ is $c$-small if both of $G_0$ and $\lambda$ are $c$-small.

\end{definition}

\begin{warning}
\label{warning: c_smalness}
The functoriality of $c$-smallness does not coincide with that of $(G_0, \lambda) \mapsto \mathcal{C}^{\lambda \han}(G_0,M)$. Namely, if the pair $(G_0, \lambda)$ maps to the pair $(G_0', \lambda')$ in the direct limit defining the $\R^{i}_{\la}$ then it is not true in general that $(G_0', \lambda')$ is still $c$-small. This is because $(G_0, \lambda)$ maps to $(G_0', \lambda')$ when $G_0' \subset G_0$ and $\lambda' \leq \lambda$, but the $c$-smallness of $(G_0', \lambda')$ is only guaranteed when $\lambda' \geq \lambda$.
\end{warning}

\begin{lemma}
\label{lem:small_action}
Let $c > 0$, $\lambda \in \RR$ and $G_0$ a subgroup.

1. If $G_0$ is $c$-small then there exists some $l \geq 0$ depending on $\lambda$ and $c$ such that for every $g \in G_l$ and for every $\un{n}$ one has $\val(\Mat(g)-I) \geq c$ for $M_{\un{n},\lambda}$ (taking the $\WLambda$-basis described above).

2. If $(G_0, \lambda)$ is $c$-small, one can take $l=0$ in 1.
\end{lemma}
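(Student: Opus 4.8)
The plan is to work directly with the explicit $\widetilde{\Lambda}$-basis of $M_{\un{n},\lambda}$ given just before Definition 2.25, namely the functions $e_{i,\un{k}} : \un{x} \mapsto m_i\varpi^{\lfloor p^\lambda \un{k}\rfloor}\binom{\un{x}}{\un{k}}$ for $\un{k}\le \un{n}$ and $1\le i\le \mathrm{rank}(M)$, and to estimate the valuation of the matrix entries of $g - 1$ acting on this basis. First I would recall that $G_0$ acts on $\mathcal C^{\lambda\han}(G_0,M)$ by $g(f)(\un x) = g(f(g^{-1}\un x))$, so I must compute $g(e_{i,\un k})$. The action splits into two effects: the semilinear action on the coefficient $m_i$ (and on $\varpi^{\lfloor p^\lambda \un k\rfloor}$, though $\varpi$ is $G_0$-fixed in valuation — more precisely I would use that $g$ is an isometry so $\val$ of everything is unchanged there, and the relevant defect is $\val((g-1)(m_i))$), and the shift of the binomial function $\binom{\un x}{\un k} \mapsto \binom{g^{-1}\un x}{\un k}$, i.e. the shift operator $\sh_{\un z}$ with $\un z = c(g^{-1})$.

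For the first effect, $c$-smallness of $G_0$ gives $\val((g-1)(m_i)) \ge c$ directly, and one needs to check the contribution of $\varpi^{\lfloor p^\lambda\un k\rfloor}$ is harmless because $g$ preserves $\val_\lambda$-norms; combined with $\val((g-1)(\varpi)) > c$ this is fine. The substantive step is the second effect: I would expand $\sh_{\un z}\binom{\un x}{\un k} - \binom{\un x}{\un k}$ in the $\binom{\un x}{\un j}$ basis using the Vandermonde identity already invoked in Lemma 2.20, namely $\Delta_{z 1_i}\binom{\un x}{\un k} = \sum_{j\ge 1}\binom{\un x}{\un k - j1_i}\binom{z}{j}$, iterated over coordinates, so that $\sh_{\un z} - 1$ applied to $e_{i,\un k}$ is a combination of the $e_{i,\un k'}$ with $\un k' \le \un k$ and coefficients controlled by $\val\binom{z_t}{j_t}$, where $\val(z_t) = \val_p(z_t)\cdot\val(p)$ grows with $l$ since $g\in G_l$ means $c(g) \in p^l\ZZ_p^d$. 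The point is to choose $l$ large enough (depending on $\lambda$ and $c$, via the same computation as in Lemma 2.23, since one must absorb the $\varpi^{\lfloor p^\lambda\un k\rfloor - \lfloor p^\lambda \un k'\rfloor}$ discrepancy coming from the differing normalizations of $e_{i,\un k}$ and $e_{i,\un k'}$) that every such matrix entry has valuation $\ge c$. This is exactly the mechanism of Lemma 2.23 / Corollary 2.24, so I would cite those estimates rather than redo them.

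For part 2, when additionally $\lambda$ is $c$-small, i.e. $\lambda > \log_p(c+1)$, the point is that $p^\lambda > c+1$, so already $\inf_{j\ge 1}(p^\lambda j - 1 + \val\binom{p^l}{j}) > c$ holds with $l = 0$ (taking $j=1$ gives $p^\lambda - 1 > c$, and larger $j$ only helps); feeding this into the same estimate shows no passage to $G_l$ is needed. I expect the main obstacle to be bookkeeping the interaction between the two normalizing powers of $\varpi$ — the $\varpi^{\lfloor p^\lambda \un k\rfloor}$ attached to the source basis vector and the $\varpi^{\lfloor p^\lambda \un k'\rfloor}$ attached to the target — since the shift operator lowers $\un k$ and hence \emph{lowers} the power of $\varpi$, producing a potentially negative contribution $\lfloor p^\lambda \un k'\rfloor - \lfloor p^\lambda \un k\rfloor$ that must be dominated by the gain $\val\binom{p^l}{j}$ from the Vandermonde coefficients; this is precisely the inequality $\inf_{j\ge1}(p^\lambda j - 1 + \val\binom{p^l}{j}) > c$ that governs the choice of $l$, so once it is isolated the argument closes cleanly.
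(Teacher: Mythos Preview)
Your proposal is correct and follows essentially the same route as the paper: decompose $(g-1)(e_{i,\un k})$ via a Leibniz-type identity into contributions from $m_i$, $\varpi^{\lfloor p^\lambda\un k\rfloor}$, and $\binom{\un x}{\un k}$; dispatch the first two with $c$-smallness of $G_0$; and for the binomial piece use the Vandermonde expansion to reduce to the inequality $\lfloor p^\lambda j\rfloor + \val\binom{p^l}{j} > c$, which determines $l$ in part 1 and holds at $l=0$ in part 2 since $p^\lambda > c+1$. The only cosmetic difference is that the paper redoes the Vandermonde estimate inline rather than citing the earlier lemma, and treats the three factors as three separate pieces rather than two.
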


\begin{proof}
Each basis element of each $M_{\un{n},\lambda}$ is of the form $m_i\varpi^{\lfloor p^{\lambda}\un{k} \rfloor}\binom{\un{x}}{\un{k}}$ for some $m_i$ for $1 \leq i \leq \mathrm{rank}(M)$ and $\un{k} \leq {\un{n}}.$ Using the identity
$$(g-1)(abc) = (g-1)(a)bc + a(g-1)(b)g(c) + g(a)g(b)(g-1)(c),$$
one reduces to showing each of the valuations of $(g-1)(m_i)$, $(g-1)(\varpi^{\lfloor p^{\lambda}\un{k} \rfloor})$ and $(g-1)(\binom{\un{x}}{\un{k}})$ are $ > c$.

First, the valuation of the $(g-1)(m_i)$ is $> c$ by the $c$-smallness of $G_0$. Second, the valuation of the term $(g-1)(\varpi^{\lfloor p^{\lambda}\un{k} \rfloor})$ is $\geq \val((g-1)(\varpi)) > c$ by virtue of the identity $(g-1)(\varpi^n) = (g-1)(\varpi^{n-1})g(\varpi)+(g-1)(\varpi)\varpi.$ 

Finally, we need to estimate $(g-1)(\binom{\un{x}}{\un{k}})$. First, by using the identity $$(gh-1)(a) = g((h-1)(a)) + (g-1)(a),$$ we may reduce to the case where $g$ is a basis element, and hence its action is given by $\sh_{p^l1_i}$ for some $1 \leq i \leq d$.
We then have the Vandermonde identity 
$$(\sh_{p^l 1_i}-1)(\binom{\un{x}}{\un{k}}) = \sum_{j=1}^{k_i}\binom{\un{x}}{\un{k}-j1_i}\binom{p^l}{j}.$$

We have $\val_{
\lambda}(\binom{\un{x}}{\un{k}}) = -\lfloor p^\lambda\un{k} \rfloor$ and $\val_{
\lambda}(\binom{\un{x}}{\un{k}-j1_i} = -\lfloor p^\lambda(\un{k}-j1_i) \rfloor$, so it suffices to show
$$-\lfloor p^\lambda(\un{k}-j1_i) \rfloor + \val(\binom{p^l}{j}) >  -\lfloor p^\lambda\un{k} \rfloor+c$$ for every $j \leq k_i$. This inequality follows in turn from the simplier inequality
$$\lfloor p^\lambda j \rfloor + \val(\binom{p^l}{j}) > c.$$

If $(G_0,\lambda)$ is $c$-small then $p^\lambda > c + 1$ and the inequality holds for all $j \geq 1$ and $l=0$. Otherwise, it is clear one can choose $l$ large enough so that the inequality holds for all $j \geq 1$.
\end{proof}

Now choose any $c > c_1 + 2c_2 + 2c_3$, and suppose $(G_0,\lambda)$ is $c$-small. From the lemma,  each $M_{\un{i},\lambda}^+$ lies in the category  $\Mod^{G_0}_{\WLambda^+}(G_0)$. By the Tate-Sen method we have $\Lambda_{H_0,n}^+$-modules $D_{H_0,n}^+(M_{\un{i},\lambda}^+)$ lying in $\Mod^{G_0}_{\Lambda_{H_0,n}^+}(G_0)$ for $n \geq n_0(M,\lambda)$. The $D_{H_0,n}^+(M_{\un{i},\lambda}^+)$ form a direct system, and we set
\begin{align}
\label{def:big_Dplus}
D^{\lambda,+}_{H_0,n}(M^+) = \varinjlim_{\un{i}} D_{H_0,n}^+(M_{\un{i},\lambda}^+)
\end{align}
and $D^{\lambda}_{H_0,n}(M) = D^{\lambda,+}_{H_0,n}(M^+)[1/\varpi].$

The natural isomorphisms 
$$\WLambda^+ \otimes_{\Lambda_{H_0,n}^+} D_{H_0,n}^+(M_{\un{i},\lambda}^+) \xrightarrow{\cong} M_{\un{i},
\lambda}^+$$
glue to an isomorphism
\begin{align}
\label{iso:descent_of_funcs}
\WLambda^+ \otimes_{\Lambda_{H_0,n}^+} D^{\lambda,+}_{H_0,n}(M^+) \xrightarrow{\cong} \mathcal{C}^{\lambda\han}(G_0,M)^+
\end{align}

exhibiting $\mathcal{C}^{\lambda\han}(G_0,M)^+$ as a descent of $D^{\lambda,+}_{H_0,n}(M^+)$.

If we only assume that $G_0$ is $c$-small (but no assumptions on $\lambda$) then we still get a descent, but now each $M_{\un{i},\lambda}^+$ only lies in $\Mod^{G_l}_{\WLambda^+}(G_0)$ for some $l \geq 0$. In this case, we only get a descent to a module $D_{H_l,n}^+(M_{\un{i},\lambda}^+)$ lying in $\Mod^{G_l}_{\Lambda_{H_l,n}^+}(G_0)$ and an isomorphism
$$\WLambda^+ \otimes_{\Lambda_{H_l,n}^+} D^{\lambda,+}_{H_l,n}(M^+) \xrightarrow{\cong} \mathcal{C}^{\lambda\han}(G_0,M)^+.$$

In the $c$-small case the module $D^{\lambda,+}_{H_0,n}(M)$ admits an alternative description.

\begin{proposition}
\label{prop:D_n_product_decomposition}
Let $(G_0,\lambda)$ be a $c$-small pair and suppose we have chosen coordinates of $G_0$ such that $G_0 \cong \Gamma_0 \times H_0$ with $\Gamma_0$ corresponding to the 1st coordinate and $H_0$ corresponds to the other coordinates.

Then there is a natural isomorphism
$$D^{\lambda,+}_{H_0,n}(M)\xrightarrow{\cong} D^{\lambda,+}_{H_0,n}(\mathcal{C}^{\lambda \han}(H_0,M)^+) \widehat{\otimes}_{\Lambda^+_{H_0,n}} \mathcal{C}^{\lambda \han}(\Gamma_0,\Lambda_{H_0,n})^+.$$
\end{proposition}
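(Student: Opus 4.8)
The plan is to prove the isomorphism by first reducing to the level of the modules $M_{\un{i},\lambda}$ that exhaust $\mathcal{C}^{\lambda\han}(G_0,M)$, and then using the product structure of the coordinates $G_0 \cong \Gamma_0 \times H_0$ together with the tensor compatibility of the functor $D^+_{H_0,n}$ from Lemma \ref{lem:D_n_tensor_prod}. More precisely, I would begin by observing that, since $G_0$ is abelian and the coordinate system splits as $\Gamma_0 \times H_0$, a Mahler expansion of a function on $G_0$ can be grouped according to the two sets of coordinates: writing $\un{x} = (x_1, \un{x}')$ with $x_1$ the $\Gamma_0$-coordinate and $\un{x}'$ the $H_0$-coordinates, one has $\binom{\un{x}}{\un{n}} = \binom{x_1}{n_1}\binom{\un{x}'}{\un{n}'}$, so that $\mathcal{C}^{\lambda\han}(G_0,M)$ is (topologically) the completed tensor product of $\mathcal{C}^{\lambda\han}(H_0,M)$ with $\mathcal{C}^{\lambda\han}(\Gamma_0,\WLambda)$ over $\WLambda$. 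At the level of the truncations, $M_{(n_1,\un{n}'),\lambda}$ is the honest (finite free) tensor product over $\WLambda$ of the $\Gamma_0$-truncation $(\WLambda)_{n_1,\lambda}$ (a function space in the single variable $x_1$) with the $H_0$-truncation $M_{\un{n}',\lambda}$, where I am using the explicit $\WLambda$-bases $\un{x}\mapsto m_i\varpi^{\lfloor p^\lambda \un{k}\rfloor}\binom{\un{x}}{\un{k}}$ described just before Definition \ref{def:c_small_pair}; these bases are manifestly multiplicative in the coordinate split.

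Next I would apply the Tate-Sen descent functor $D^+_{H_0,n}$ to this factorization. Because $(G_0,\lambda)$ is $c$-small with $c > c_1 + 2c_2 + 2c_3$, by Lemma \ref{lem:small_action}(2) every $M_{\un{n},\lambda}^+$ lies in $\Mod^{G_0}_{\WLambda^+}(G_0)$ with $l = 0$, and likewise for $(\WLambda)_{n_1,\lambda}^+$ and $M_{\un{n}',\lambda}^+$ separately (these are special cases, $M = \WLambda$ and the $H_0$-coordinates only). Lemma \ref{lem:D_n_tensor_prod} then gives a natural isomorphism
$$D^+_{H_0,n}(M_{\un{n}',\lambda}^+ \otimes_{\WLambda^+} (\WLambda)_{n_1,\lambda}^+) \xrightarrow{\cong} D^+_{H_0,n}(M_{\un{n}',\lambda}^+) \otimes_{\Lambda^+_{H_0,n}} D^+_{H_0,n}((\WLambda)_{n_1,\lambda}^+).$$
Here I must observe that $D^+_{H_0,n}((\WLambda)_{n_1,\lambda}^+)$ is, by definition of the Tate-Sen functor and the fact that the $\Gamma_0$-coordinate functions are already fixed by $H_0$, nothing but $\mathcal{C}^{\lambda\han}(\Gamma_0,\Lambda_{H_0,n})^+$ truncated at $n_1$ — i.e. the image of $R_{H_0,n}$ applied coefficientwise — so that $D^+_{H_0,n}$ on the $\Gamma_0$-factor simply replaces the coefficient ring $\WLambda$ by $\Lambda_{H_0,n}$ in each Mahler coefficient. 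I would then pass to the colimit over $\un{n} = (n_1, \un{n}')$, which distributes over the tensor product (since tensor products commute with filtered colimits) and which, on the $\Gamma_0$-factor, reassembles the full $\mathcal{C}^{\lambda\han}(\Gamma_0,\Lambda_{H_0,n})^+$; the completion $\widehat{\otimes}$ appearing in the statement arises precisely because the colimit of the truncated $\Gamma_0$-function spaces is only dense in $\mathcal{C}^{\lambda\han}(\Gamma_0,\Lambda_{H_0,n})^+$, so one must complete, and simultaneously the left-hand side $D^{\lambda,+}_{H_0,n}(M)$ is itself defined as the ($\varpi$-adically completed) colimit $\varinjlim_{\un{i}} D^+_{H_0,n}(M_{\un{i},\lambda}^+)$, matching up the two sides. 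Finally, invert $\varpi$ to obtain the stated isomorphism of $\Lambda_{H_0,n}$-modules.

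The main obstacle I anticipate is bookkeeping the completions and verifying that the colimit-then-complete operations on the two sides genuinely agree, i.e. that the natural map from $\varinjlim_{\un{i}}\big(D^+_{H_0,n}(M_{\un{n}',\lambda}^+)\otimes (\WLambda)_{n_1,\lambda}^+$-level$\big)$ to the claimed completed tensor product is an isomorphism rather than merely a dense injection with dense image — this needs the fact (available from the explicit finite-free bases and the uniform valuation estimate of Proposition \ref{prop:action_uniformity}, or rather its input) that the transition maps in the colimit are strict and the $\varpi$-adic topology is well-behaved, so that completing commutes with the finite-free tensor factor $D^+_{H_0,n}(\mathcal{C}^{\lambda\han}(H_0,M)^+)$. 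A secondary technical point is checking naturality in a way compatible with the $\Gamma_0$-action, which I would handle by noting that the whole construction is $\Gamma_0$-equivariant because $\Delta^{\un{n}}$, the shift operators, and $D^+_{H_0,n}$ are all $G_0$-equivariant (using $G_{H_0} = G_0$ here since $G_0$ is abelian and $H_0$ is normal). Everything else is a routine unwinding of the definitions of $\mathcal{C}^{\lambda\han}$, the bases of $M_{\un{n},\lambda}$, and the functor $D^+_{H_0,n}$.
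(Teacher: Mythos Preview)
Your proposal is correct and follows the same route as the paper: split $M^+_{\un{k},\lambda}$ along the coordinate decomposition $G_0 \cong \Gamma_0 \times H_0$ into an $H_0$-factor tensored with a $\Gamma_0$-factor, apply the tensor compatibility Lemma~\ref{lem:D_n_tensor_prod}, identify $D^+_{H_0,n}$ on the $\Gamma_0$-factor explicitly, then pass to the colimit over $\un{k}$ and complete. The only place your argument is imprecise is the identification $D^+_{H_0,n}((\WLambda)_{n_1,\lambda}^+) \cong \mathcal{C}^{\lambda\han}(\Gamma_0,\Lambda_{H_0,n}^+)^{\Delta^{n_1}=0}$: this does not follow from ``the $\Gamma_0$-coordinate functions being $H_0$-fixed'' or from applying $R_{H_0,n}$ coefficientwise, since $D^+_{H_0,n}$ is not defined that way; rather, the basis $\varpi^{\lfloor p^\lambda m\rfloor}\binom{x}{m}$ is $c$-fixed for the $G_0$-action precisely because $(G_0,\lambda)$ is $c$-small (the same estimate as in Lemma~\ref{lem:small_action}), and for a module with a $c_3$-fixed basis the Tate--Sen descent is the free $\Lambda^+_{H_0,n}$-module on that same basis (Proposition~4.10 of \cite{Po22a}, which is what the paper invokes). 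Also, your final ``invert $\varpi$'' step is unnecessary: the statement is already at the integral $(+)$ level.
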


\begin{proof}
Given $\un{k} \in \ZZ^d_{\geq 0}$ let ${\un{k}}_{\Gamma_0}$ denote the first coordinate of $\un{k}$ (corresponding to $\Gamma_0$) and let ${\un{k}}_{H_0}$ denote $\un{k}$ without the first coordinate (corresponding to $H_0$).
The description of the $\WLambda^+$-basis of $M^+_{\un{k},\lambda}$ shows there is an isomorphism
$$M^+_{\un{k},\lambda} \cong M^+ \otimes_{\WLambda^+} \mathcal{C}^{\lambda\han}(H_0,\WLambda^+)^{\Delta^{\un{k}_{H_0}}=0}\otimes_{\WLambda^+} \mathcal{C}^{\lambda\han}(
\Gamma_0,\WLambda^+)^{\Delta^{\un{k}_{\Gamma_0}}=0},$$
which can be rewritten as 
$$M^+_{\un{k},\lambda} \cong\mathcal{C}^{\lambda\han}(H_0,M^+)^{\Delta^{\un{k}_{H_0}}=0}\otimes_{\WLambda^+} \mathcal{C}^{\lambda\han}(
\Gamma_0,\WLambda^+)^{\Delta^{\un{k}_{\Gamma_0}}=0}.$$
Now one observes that $\mathcal{C}^{\lambda\han}(
\Gamma_0,\WLambda^+)^{\Delta^{\un{k}_{\Gamma_0}}=0}$ has a basis given by $\varpi^{\lfloor p^\lambda m \rfloor}\binom{x}{m}$ for $0 \leq m < \un{k}_{\Gamma_0}.$ It is $c$-fixed for the $\Gamma_0$-action because $(G_0,\lambda)$ is $c$-small (this is the same argument appearing in Lemma \ref{lem:small_action}). Hence by Proposition 4.10 of \cite{Po22b} there is a natural isomorphism
$$D_{H_0,n}^+(\mathcal{C}^{\lambda\han}(
\Gamma_0,\WLambda^+)^{\Delta^{\un{k}_{\Gamma_0}}=0}) \cong \mathcal{C}^{\lambda\han}(
\Gamma_0,\Lambda_{H_0,n}^+)^{\Delta^{\un{k}_{\Gamma_0}}=0}.$$

We then conclude by applying Lemma \ref{lem:D_n_tensor_prod}, taking the limit over $\un{k}$ and taking the completion.
\end{proof}

\subsection{Cohomology of $c$-small pairs}

Choose any $c > c_1 + 2c_2 + 2c_3$ and fix a $c$-small pair $(G_0,\lambda).$  In this subsection we shall simplify the $G_0$-cohomology of $\mathcal{C}^{\lambda\han}(G_0,M)$.

For every $\lambda$ and $n\geq n_0(M,\lambda)$ we have modules $D^{\lambda,+}_{H_0,n}(M^+)$ defined as in (\ref{def:big_Dplus}). In what follows, whenever we compute inside a cohomology group depending on $\lambda$ and involving $n$, we are going to implicitly assume $n \geq n_0(M,\lambda)$ so that $D^{\lambda,+}_{H_0,n}(M^+)$ is defined. We set $\Gamma_0 = G_0/H_0$, which is isomorphic to $\ZZ_p$ (for sufficiently small $G_0$).

\begin{proposition}
\label{prop:H_cohomology_vanishing}
For $i \geq 0$ we have natural isomorphisms $$\mathrm{H}^i(G_0,\mathcal{C}^{\lambda\han}(G_0,M)) \cong \mathrm{H}^i(\Gamma_0,\mathcal{C}^{\lambda\han}(G_0,M)^{H_0}) \cong \mathrm{H}^i(\Gamma_0,\WLambda^{H_0} \otimes_{\Lambda_{H_0,n}} D^{\lambda}_{H_0,n}(M)).$$
\end{proposition}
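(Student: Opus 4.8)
\textbf{Proof plan for Proposition \ref{prop:H_cohomology_vanishing}.}

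The plan is to establish the two claimed isomorphisms separately, the first by an inflation–restriction (Hochschild–Serre) argument combined with the vanishing of higher $H_0$-cohomology, and the second by the descent isomorphism furnished by the Tate–Sen method. For the first isomorphism, I would use the Hochschild–Serre spectral sequence $\mathrm{H}^i(\Gamma_0, \mathrm{H}^j(H_0, \mathcal{C}^{\lambda\han}(G_0,M))) \Rightarrow \mathrm{H}^{i+j}(G_0, \mathcal{C}^{\lambda\han}(G_0,M))$, which collapses to the desired isomorphism once we know $\mathrm{H}^j(H_0, \mathcal{C}^{\lambda\han}(G_0,M)) = 0$ for $j \geq 1$. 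To see this vanishing, I would pass through the dense exhaustion $\varinjlim_{\un{n}} M_{\un{n},\lambda}$ of $\mathcal{C}^{\lambda\han}(G_0,M)$ by the finite free $G_0$-stable submodules $M_{\un{n},\lambda}$ from \ref{def:c_small_pair} and the surrounding discussion: each $M_{\un{n},\lambda}^+$ lies in $\Mod^{G_0}_{\WLambda^+}(G_0)$ (since $(G_0,\lambda)$ is $c$-small, by Lemma \ref{lem:small_action}), so the Tate–Sen machinery applies, and in particular the standard Tate–Sen cohomology vanishing for $H_0$-cohomology of objects admitting a descent to $D^+_{H_0,n}$ gives $\mathrm{H}^j(H_0, M_{\un{n},\lambda}) = 0$ for $j \geq 1$. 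Passing to the colimit over $\un{n}$ (which is exact) and using that continuous cohomology of the relevant modules commutes with this colimit in the inductive-topology setting yields $\mathrm{H}^j(H_0, \mathcal{C}^{\lambda\han}(G_0,M)) = 0$ for $j \geq 1$, and then also $\mathcal{C}^{\lambda\han}(G_0,M)^{H_0} = \varinjlim_{\un{n}} M_{\un{n},\lambda}^{H_0}$.

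For the second isomorphism, I would invoke the descent isomorphism established just before this subsection, namely $\WLambda^+ \otimes_{\Lambda_{H_0,n}^+} D^{\lambda,+}_{H_0,n}(M^+) \xrightarrow{\cong} \mathcal{C}^{\lambda\han}(G_0,M)^+$, which upon inverting $\varpi$ gives $\WLambda \otimes_{\Lambda_{H_0,n}} D^{\lambda}_{H_0,n}(M) \xrightarrow{\cong} \mathcal{C}^{\lambda\han}(G_0,M)$. Taking $H_0$-invariants of both sides and using that $D^{\lambda}_{H_0,n}(M)$ is already $H_0$-fixed (it lives over $\Lambda_{H_0,n}$ and the $D^+_{H_0,n}(M^+_{\un{i},\lambda})$ are fixed by $H_0$ by construction in $\Mod^{G_0}_{\Lambda_{H_0,n}^+}(G_0)$), together with the fact that $\WLambda^{H_0}$ is the descent of $\Lambda_{H_0,n}$ (again a Tate–Sen statement, the case $M = \WLambda$), identifies $\mathcal{C}^{\lambda\han}(G_0,M)^{H_0}$ with $\WLambda^{H_0} \otimes_{\Lambda_{H_0,n}} D^{\lambda}_{H_0,n}(M)$ as a $\Gamma_0$-module. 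Applying $\mathrm{H}^i(\Gamma_0, -)$ then gives the second isomorphism.

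The main obstacle I anticipate is the careful handling of topologies and the interchange of continuous group cohomology with the filtered colimit $\varinjlim_{\un{n}} M_{\un{n},\lambda}$: one needs that $\mathcal{C}^{\lambda\han}(G_0,M)$, with its valuation $\val_\lambda$, is suitably built from the $M_{\un{n},\lambda}$ (they are dense, and the colimit is an increasing union with isometric transition maps) so that continuous cochains, and hence continuous cohomology, commute with the colimit — this is where strictness of the relevant maps and the open mapping theorem (Theorem 2.2.8 of \cite{KL11}), already used in $\mathsection{2.3}$, enter. A secondary point requiring care is verifying that $n$ can be chosen uniformly (i.e. $n \geq n_0(M,\lambda)$ works for all $M_{\un{n},\lambda}$ simultaneously), which holds because the Tate–Sen constants depend only on $\WLambda$ and the bound $\val(\Mat(g)-I) \geq c$, not on the rank; this is exactly the content built into the category $\Mod^{G_0}_{\WLambda^+}(G_0)$ and is why the $c$-smallness hypothesis is imposed.
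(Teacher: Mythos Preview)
Your proposal is correct and follows essentially the same approach as the paper: both arguments use the descent isomorphism $\WLambda \otimes_{\Lambda_{H_0,n}} D^{\lambda}_{H_0,n}(M) \cong \mathcal{C}^{\lambda\han}(G_0,M)$ together with the Hochschild--Serre spectral sequence, reducing everything to the vanishing of $\mathrm{H}^j(H_0,\mathcal{C}^{\lambda\han}(G_0,M))$ for $j\geq 1$. The only difference is that the paper outsources this vanishing to Proposition~5.8 of \cite{Po24} (noting that the $p$-adic valuation there can be replaced by the $\varpi$-adic one), whereas you sketch the argument via the exhaustion by the $M_{\un{n},\lambda}$ and Tate--Sen descent on each piece; the obstacle you flag about passing cohomology through the completed colimit is exactly the point absorbed into that cited proposition.
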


\begin{proof}
Inverting $\varpi$ in the isomorphism (\ref{iso:descent_of_funcs}), we have natural isomorphisms $$\WLambda \otimes_{\Lambda_{H_0,n}} D^{\lambda}_{H_0,n}(M) \cong \mathcal{C}^{\lambda\han}(G_0,M).$$ By Proposition 5.8 of \cite{Po24}, the cohomology $\mathrm{H}^i(H_0,\mathcal{C}^{\lambda\han}(G_0,M))$ is zero for $i \ge 1$. Note that in that setting one assumes that the valuation is $p$-adic, but the same proof works for a $\varpi$-adic valuation defined by an arbitrary topologically nilpotent unit $\varpi$. We conclude the proof by applying the Hochschild-Serre spectral sequence (see \cite[Lemma 3.3]{kedlaya2016hochschild} for a version which applies in our setting).
\end{proof}
Recall that we have maps $R_{H_0,n}: \WLambda^{H_0} \rightarrow \Lambda_{H_0,n}$ which are projections. Setting $X_{H_0,n} = \ker R_{H_0,n}$, we obtain a decomposition $\WLambda^{H_0} \cong X_{H_0,n} \oplus \Lambda_{H_0,n}$.
This allows us to write a $\Gamma_0$-equivariant decomposition
$$\WLambda^{H_0} \otimes_{\Lambda_{H_0,n}} D^{\lambda}_{H_0,n}(M) \cong X_{H_0,n} \otimes_{\Lambda_{H_0,n}} D^{\lambda}_{H_0,n}(M) \oplus D^{\lambda}_{H_0,n}(M)$$

and hence for $i \geq 1$
$$\mathrm{H}^i(\Gamma_0,\WLambda^{H_0} \otimes_{\Lambda_{H_0,n}} D^{\lambda}_{H_0,n}(M)) \cong \mathrm{H}^i(\Gamma,X_{H_0,n} \otimes_{\Lambda_{H_0,n}} D^{\lambda}_{H_0,n}) \oplus \mathrm{H}^i(\Gamma,D^{\lambda}_{H_0,n}(M)).$$

In particular, for $i \geq 2$ the cohomology is zero because $\Gamma_0 \cong \ZZ_p.$


\begin{proposition}
\label{prop:gamma_coh_vanishing_X}
We have $\mathrm{H}^1(\Gamma_0,X_{H_0,n}\otimes_{\Lambda_{H_0,n}} D^{\lambda}_{H_0,n}) = 0.$
\end{proposition}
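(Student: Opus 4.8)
The plan is to reduce the vanishing of $\mathrm{H}^1(\Gamma_0, X_{H_0,n}\otimes_{\Lambda_{H_0,n}} D^{\lambda}_{H_0,n})$ to an invertibility statement for $\gamma - 1$ coming from the Tate-Sen axiom (TS3), exactly as in the classical Tate-Sen method but now carried along the $\lambda$-analytic filtration. Since $\Gamma_0 \cong \ZZ_p$, computing $\mathrm{H}^1$ amounts to showing that for a topological generator $\gamma$ of $\Gamma_0$, the operator $\gamma - 1$ is surjective (equivalently bijective, given we already control $\mathrm{H}^0$) on the module $X_{H_0,n}\otimes_{\Lambda_{H_0,n}} D^{\lambda}_{H_0,n}$. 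The key point is that $X_{H_0,n} = (1 - R_{H_0,n})(\WLambda^{H_0})$ is precisely the module on which (TS3) guarantees $\gamma - 1$ is invertible with $\val((\gamma-1)^{-1}(x)) \geq \val(x) - c_3$, provided $n(\gamma) \leq n \leq n(G_0)$.

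First I would recall that $D^{\lambda}_{H_0,n}(M) = \varinjlim_{\un{i}} D_{H_0,n}(M_{\un{i},\lambda})$, so it suffices to prove the vanishing at each finite stage $X_{H_0,n}\otimes_{\Lambda_{H_0,n}} D_{H_0,n}(M_{\un{i},\lambda})$ and then pass to the colimit (the colimit is filtered and exact, and the transition maps are strict by the open mapping theorem as used for the long exact sequence of $\R^i_{\la}$). At a finite stage, $D_{H_0,n}(M_{\un{i},\lambda})$ is finite free over $\Lambda_{H_0,n}$ with a $c_3$-fixed basis for the $G_0$-action (by construction of the Tate-Sen functor and Proposition \ref{prop:action_uniformity}). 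The strategy is then the standard successive-approximation / telescoping argument: writing an arbitrary element of $X_{H_0,n}\otimes_{\Lambda_{H_0,n}} D_{H_0,n}(M_{\un{i},\lambda})$ in the chosen basis, one solves $(\gamma - 1)(y) = x$ coordinate by coordinate using the Leibniz identity $(\gamma-1)(ab) = (\gamma-1)(a)b + \gamma(a)(\gamma-1)(b)$ to separate the $X_{H_0,n}$-part (where $(\gamma-1)^{-1}$ exists by (TS3)) from the error terms coming from the action on the basis (which are small, of valuation increased by $s = s(G_0,n)$ from Proposition \ref{prop:action_uniformity}). Choosing $c_3 < s$ after possibly shrinking, the geometric series $\sum_k ((\gamma-1)^{-1}\circ(\text{error}))^k$ converges $\varpi$-adically, yielding the desired preimage.

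The main obstacle I anticipate is bookkeeping the interaction between the two "directions": the Tate-Sen contraction $(\gamma-1)^{-1}$ on $X_{H_0,n}$ gains us a factor controlled by $-c_3$, while the semilinearity of the $\Gamma_0$-action on $D_{H_0,n}(M_{\un{i},\lambda})$ (through its $c_3$-fixed basis) only costs us a factor bounded below by $s$; one must verify that the choice $c > c_1 + 2c_2 + 2c_3$ and $c$-smallness of $(G_0,\lambda)$ indeed make the net contraction strictly positive and, crucially, \emph{uniform in $\un{i}$} so that the estimate survives the colimit over $\un{i}$. The uniformity in $\un{i}$ is exactly what Proposition \ref{prop:action_uniformity} was set up to provide (the constant $s$ there is independent of $M$), so the argument should go through; nonetheless this is the step where the $\lambda$-analytic setup genuinely differs from \cite{BC08} and where care is needed. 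A clean way to organize it is to prove the statement for $\WLambda^{H_0}\otimes_{\Lambda_{H_0,n}} D^{\lambda}_{H_0,n}$ replaced by $X_{H_0,n}\otimes_{\Lambda_{H_0,n}}D^{\lambda}_{H_0,n}$ directly via the operator $1/(\gamma-1)$ defined termwise on the Mahler-type basis of the $M_{\un{i},\lambda}$, checking convergence once and for all; I expect the author's proof to cite Proposition 5.9 or the analogous lemma of \cite{Po24}, noting as before that the passage from a $p$-adic to a $\varpi$-adic valuation is harmless.
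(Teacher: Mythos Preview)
Your approach has a real gap at the convergence step. You want to show $\gamma-1$ is surjective on $X_{H_0,n}\otimes_{\Lambda_{H_0,n}} D^{\lambda}_{H_0,n}$ at a \emph{fixed} $n$ by iterating $(\gamma-1)^{-1}$ on the $X$-factor against the error coming from the action on the basis of $D$. For this you need the net contraction to be strictly positive, i.e.\ $s>c_3$. But look at the proof of Proposition~\ref{prop:action_uniformity}: the constant there is $s=\min(c_3,t(G_0,n))$, so $s\le c_3$ always. Your sentence ``choosing $c_3<s$ after possibly shrinking'' cannot be realized: $c_3$ is the fixed constant from (TS3), and the gain on $D$ is capped at $c_3$ precisely because the Tate--Sen descent only produces a $c_3$-fixed basis. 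The geometric series you write down does not converge.

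The paper's proof fixes this with an extra idea you are missing. It first observes that the left side of the decomposition preceding the proposition, namely $\mathrm{H}^1(\Gamma_0,\WLambda^{H_0}\otimes_{\Lambda_{H_0,n}} D^{\lambda}_{H_0,n}(M))$, is independent of $n$; hence it is enough to show $\varinjlim_n \mathrm{H}^1(\Gamma_0,X_{H_0,n}\otimes D^{\lambda}_{H_0,n})=0$. This freedom in $n$ is exactly what makes the estimates close. One replaces $\gamma$ by a high enough power $\gamma'$ so that $(\gamma'-1)$ gains at least $2c_3$ on $D^{\lambda,+}_{H_0,n}$ (possible by iterating Proposition~\ref{prop:action_uniformity}), and then passes to $n'\ge n$ large enough that $n(\gamma')\le n'$, so that (TS3) applies to $\gamma'$ on $X_{H_0,n'}$ with loss $c_3$. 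With $\alpha$ solving $(\gamma'^{-1}-1)(\alpha)=a$, the explicit series
\[
y=\sum_{i\ge 0}(\gamma'^{-1}-1)^{-i}(\alpha)\otimes(\gamma'-1)^i(b)
\]
now converges (net gain $\ge c_3$ per step) and satisfies $(\gamma'-1)(y)=a\otimes b$; since $\gamma-1$ divides $\gamma'-1$, this suffices. Your colimit over $\un{i}$ is fine and the uniformity you flag is indeed handled by Proposition~\ref{prop:action_uniformity}, but the essential move---trading a power of $\gamma$ for a larger $n$ via the $n$-independence of the cohomology---is absent from your sketch and cannot be bypassed.
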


\begin{proof}
In the above decomposition of the $\Gamma_0$-cohomology of $$\WLambda^{H_0} \otimes_{\Lambda_{H_0,n}} D^{\lambda}_{H_0,n}(M) \cong \mathcal{C}^{\lambda\han}(G_0,M)^{H_0}$$ the left hand side does not depend on $n$. Hence, the same holds for the right hand side. Therefore, it suffices to show that
$$\varinjlim_{n}\mathrm{H}^1(\Gamma_0,X_{H_0,n}\otimes_{\Lambda_{H_0,n}} D^{\lambda}_{H_0,n}) = 0.$$
Write $\gamma$ for a generator of $\Gamma_0$. It suffices to show that every $$a\otimes b \in X_{H_0,n} \otimes_{\Lambda_{H_0,n}} D_{H_0,n}^{\lambda}(M)$$ is in the image of $\gamma-1$ for some $X_{H_0,n'} \otimes_{\Lambda_{H_0,n'}} D_{H_0,n'}^{\lambda}(M)$, with $n'$ depending only on $n$.

By Proposition \ref{prop:action_uniformity}, we know that there exists some $s > 0$ such that 
$$(\gamma-1)(D_{H_0,n}^{\lambda,+}(M^+)) \subset \varpi^s\cdot D_{H_0,n}^{\lambda,+}(M^+).$$

For some power $\gamma'$ of $\gamma$ we therefore have  
$$(\gamma'-1)(D_{H_0,n}^{\lambda,+}(M^+)) \subset \varpi^{2c_2}\cdot D_{H_0,n}^{\lambda,+}(M^+).$$
Take $n' \geq n$ so that $n(\gamma') \leq n'$. For such $n'$, we know that $\val{(\gamma'-1)(x)}\geq \val(x)-c_2$ for $x \in X_{H_0,n'}$.

Let $\alpha$ be such that $(\gamma'^{-1}-1)(\alpha) = a$. Consider the series
$$y = \sum_{i=0}^\infty (\gamma'^{-1}-1)^{-i}(\alpha) \otimes (\gamma'-1)^{i}(b).$$
The series converges because $\val(\gamma'^{-1}-1)^{-1}(x) \geq \val(x) -c_3$ while $\val((\gamma'-1)(x))\geq \val(x) +2c_3$ by our choice of $n'$. A direct computation shows that $(\gamma'-1)(y) = a \otimes b$. As $\gamma'-1$ is divisible by $\gamma-1$, this proves $a \otimes b$ is in the image of $\gamma-1$, as required. 
\end{proof}
Putting this all together, we get 
\begin{theorem}
\label{thm:cohomology_for_c_small_pairs}
Let $c > c_1 + 2c_2+ 2c_3$ and let $(G_0,\lambda)$ be a $c$-small pair. Then $\mathrm{H}^i(G_0,\mathcal{C}^{\lambda\han}(G_0,M)) = 0$ for $i \geq 2$, and $$\mathrm{H}^1(G_0,\mathcal{C}^{\lambda\han}(G_0,M)) \cong \mathrm{H}^1(\Gamma_0,D_{H_0,n}^{\lambda}(M)).$$
\end{theorem}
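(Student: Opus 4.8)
The plan is to assemble the pieces already developed in this subsection. The starting point is Proposition~\ref{prop:H_cohomology_vanishing}, which identifies $\mathrm{H}^i(G_0,\mathcal{C}^{\lambda\han}(G_0,M))$ with $\mathrm{H}^i(\Gamma_0,\WLambda^{H_0}\otimes_{\Lambda_{H_0,n}}D^{\lambda}_{H_0,n}(M))$ for all $i\geq 0$. Then I would invoke the $\Gamma_0$-equivariant splitting $\WLambda^{H_0}\cong X_{H_0,n}\oplus\Lambda_{H_0,n}$ coming from the projection $R_{H_0,n}$, which induces, after tensoring with $D^{\lambda}_{H_0,n}(M)$ over $\Lambda_{H_0,n}$, the direct sum decomposition recorded just before Proposition~\ref{prop:gamma_coh_vanishing_X}. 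Since $\Gamma_0\cong\ZZ_p$ has cohomological dimension $1$, the summand contributions vanish in degrees $i\geq 2$, which already gives the first assertion $\mathrm{H}^i(G_0,\mathcal{C}^{\lambda\han}(G_0,M))=0$ for $i\geq 2$.

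For the degree-one statement, I would feed the same decomposition into degree $i=1$: we get
$$\mathrm{H}^1(G_0,\mathcal{C}^{\lambda\han}(G_0,M))\cong \mathrm{H}^1(\Gamma_0,X_{H_0,n}\otimes_{\Lambda_{H_0,n}}D^{\lambda}_{H_0,n})\oplus \mathrm{H}^1(\Gamma_0,D^{\lambda}_{H_0,n}(M)).$$
By Proposition~\ref{prop:gamma_coh_vanishing_X} the first summand is zero, leaving exactly $\mathrm{H}^1(\Gamma_0,D^{\lambda}_{H_0,n}(M))$, which is the claimed isomorphism. I should also remark that, although $D^{\lambda}_{H_0,n}(M)$ and hence the right-hand side are defined in terms of the auxiliary integer $n$ and a choice of coordinates, the left-hand side is manifestly independent of these choices, so the isomorphism forces the right-hand side to be independent of $n$ as well; this is consistent with the $n$-independence already exploited in the proof of Proposition~\ref{prop:gamma_coh_vanishing_X}.

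Honestly, there is no real obstacle here: the theorem is a bookkeeping corollary of Propositions~\ref{prop:H_cohomology_vanishing} and~\ref{prop:gamma_coh_vanishing_X} together with $\mathrm{cd}(\ZZ_p)=1$. The only point requiring a modicum of care is making sure the $\Gamma_0$-equivariance of the splitting $\WLambda^{H_0}\cong X_{H_0,n}\oplus\Lambda_{H_0,n}$ is legitimate — this uses axiom (TS2)(3), which guarantees $R_{H_0,n}$ commutes with the action of $G_0$ (and in particular with $\Gamma_0=G_0/H_0$, acting through $C_{H_0}$ after possibly shrinking) — and that tensoring over $\Lambda_{H_0,n}$ with the fixed module $D^{\lambda}_{H_0,n}(M)$ preserves this equivariance, which is immediate since $D^{\lambda}_{H_0,n}(M)$ carries a commuting semilinear $\Gamma_0$-action. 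With those observations in place the proof is a two-line concatenation.
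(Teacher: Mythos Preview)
Your proposal is correct and follows exactly the paper's approach: the theorem is assembled from Proposition~\ref{prop:H_cohomology_vanishing}, the $\Gamma_0$-equivariant splitting $\WLambda^{H_0}\cong X_{H_0,n}\oplus\Lambda_{H_0,n}$, the fact that $\Gamma_0\cong\ZZ_p$ has cohomological dimension~$1$, and Proposition~\ref{prop:gamma_coh_vanishing_X}. The paper itself presents the theorem with no separate proof beyond the phrase ``Putting this all together,'' so your write-up is, if anything, more detailed than the original.
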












\subsection{Vanishing of higher locally analytic vectors}

Recall that 
$$\R^i_{\la}(M) = \varinjlim_{G_0,\lambda} \mathrm{H}^i(G_0,\mathcal{C}^{\lambda \han}(G_0,M)).$$
In this subsection we shall complete the proof of Theorem \ref{main_thm}
by showing that $\R^i_{\la}(M) = 0$ for $i\geq 1$.
\begin{proof}
As always, we fix some $c > c_1 +2c_2 + 2c_3$. We claim that $c$-small pairs $(G_0,\lambda)$ are cofinal in the above direct limit. This is a consequence of Corollary \ref{cor:making_lambda_smaller} together with the observation that if $G_0$ is $c$-small, so is $G_l$. In particular, we have seen that $\mathrm{H}^i(G_0,\mathcal{C}^{\lambda \han}(G_0,M) = 0$ for $i\geq 2$ and $c$-small pairs $(G_0,\lambda)$, so it automatically follows that $\R^i_{\la}(M) = 0$ for $i \geq 2$. The nontrivial part is showing the vanishing of $\R^1_{\la}(M).$ Using the cofinality of $c$-small pairs, this is a consequence of the following proposition. 
\end{proof}

\begin{proposition}
\label{prop:killing_cocycle_coming_from_small_pairs}
Let $(G_0,\lambda)$ be a $c$-small pair and let $f\in \mathrm{H}^1(G_0,\mathcal{C}^{\lambda \han}(G_0,M))$. Then there exist some $l \geq 0$ and some $\lambda' \leq \lambda$ so that $f$ is mapped to $0$ in $\mathrm{H}^1(G_l,\mathcal{C}^{\lambda' \han}(G_l,M)).$
\end{proposition}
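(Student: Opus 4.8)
The plan is to leverage Theorem~\ref{thm:cohomology_for_c_small_pairs}, which identifies $\mathrm{H}^1(G_0,\mathcal{C}^{\lambda\han}(G_0,M))$ with $\mathrm{H}^1(\Gamma_0,D_{H_0,n}^{\lambda}(M))$, and then use the product decomposition of Proposition~\ref{prop:D_n_product_decomposition} together with the improving-$\lambda$ estimates (Corollary~\ref{cor:making_lambda_smaller}, Lemma~\ref{lem:make_funcs_smaller_by_enlargning_l}) to kill the surviving $\Gamma_0$-cohomology class. Concretely, first I would choose coordinates realizing $G_0 \cong \Gamma_0 \times H_0$ with $\Gamma_0 \cong \ZZ_p$ the first factor and apply Theorem~\ref{thm:cohomology_for_c_small_pairs} to reduce $f$ to a class in $\mathrm{H}^1(\Gamma_0, D_{H_0,n}^{\lambda}(M))$, represented by a single element $b \in D_{H_0,n}^{\lambda}(M)$ modulo $(\gamma-1)$, where $\gamma$ generates $\Gamma_0$. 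By Proposition~\ref{prop:D_n_product_decomposition}, $D_{H_0,n}^{\lambda}(M) \cong D_{H_0,n}^{\lambda}(\mathcal{C}^{\lambda\han}(H_0,M)^+)[1/\varpi]\,\widehat{\otimes}_{\Lambda_{H_0,n}}\mathcal{C}^{\lambda\han}(\Gamma_0,\Lambda_{H_0,n})$, so $b$ lies (up to $\varpi$-adic approximation, which suffices since the obstruction group is $\varpi$-adically separated and $\gamma-1$ is strict) in a tensor product whose $\Gamma_0$-factor is the analytic-functions module $\mathcal{C}^{\lambda\han}(\Gamma_0,\Lambda_{H_0,n})$ on which $\Gamma_0$ acts by shifts.

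The key point is then that on the shift side, $\gamma - 1$ acts almost surjectively: if $g = \sum_m a_m \binom{x}{m} \in \mathcal{C}^{\lambda\han}(\Gamma_0,-)$ then $(\gamma-1)(g) = \sum_m a_m\binom{x}{m-1}$ (as used already in the proof of Corollary~\ref{cor:coh_comparison_dim_1}), so the only obstruction to solving $(\gamma-1)(y) = g$ within the \emph{same} $\lambda$ is the constant term $a_0$ — i.e., the image of $b$ under "evaluation of the $\Gamma_0$-variable", which lands in $D_{H_0,n}^{\lambda}(\mathcal{C}^{\lambda\han}(H_0,M)^+)[1/\varpi]$. But passing from $\lambda$ to a smaller $\lambda'$ and from $G_0$ to $G_l$ (enlarging the group on the $\Gamma_0$-factor, i.e. replacing $\gamma$ by $\gamma^{p^l}$), Lemma~\ref{lem:make_funcs_smaller_by_enlargning_l} and Corollary~\ref{cor:making_lambda_smaller} let us gain as much $\varpi$-divisibility as we like in the antiderivative, so that the formal primitive $y = \sum_{i\geq 0}(\gamma^{p^l}-1)^{-1}\cdots$ of the type appearing in the proof of Proposition~\ref{prop:gamma_coh_vanishing_X} converges. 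Thus after restricting to $G_l$ and shrinking to $\lambda'$, the class $f$ becomes a coboundary provided its "residue" also dies — and the residue is precisely a lower-dimensional instance (the $H_0$-factor has dimension $d-1$), so one induces on $d = \dim G$, the base case $d = 1$ being essentially Corollary~\ref{cor:coh_comparison_dim_1}'s surjectivity computation (or trivial, since $H_0$ is then finite).

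So the structure is: (i) reduce via Theorem~\ref{thm:cohomology_for_c_small_pairs} to a $\Gamma_0$-cohomology class over $D_{H_0,n}^{\lambda}(M)$; (ii) split off the $\mathcal{C}^{\lambda\han}(\Gamma_0,-)$ tensor factor via Proposition~\ref{prop:D_n_product_decomposition}; (iii) solve $(\gamma-1)y = g$ on the shift factor up to its constant term, which is cheap; (iv) make the geometric series for the primitive converge by enlarging $\Gamma_0$ to $\Gamma_0^{p^l}$ and shrinking $\lambda \to \lambda'$, using Lemma~\ref{lem:make_funcs_smaller_by_enlargning_l} and Proposition~\ref{prop:action_uniformity}; (v) handle the leftover residue term by induction on $\dim G$ via the $H_0$-factor. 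The main obstacle I anticipate is step (iv) combined with the bookkeeping of Warning~\ref{warning: c_smalness}: the pair $(G_l,\lambda')$ we land in is generally \emph{not} $c$-small since $\lambda' < \lambda$, so Theorem~\ref{thm:cohomology_for_c_small_pairs} no longer applies directly to the target, and one has to be careful to phrase the vanishing purely in terms of the explicit coboundary (exhibiting $y$ directly as a locally analytic function) rather than re-invoking the cohomological identification on the far side. A secondary subtlety is controlling uniformity of the constant $s$ in Proposition~\ref{prop:action_uniformity} across the direct system in $\un{i}$ defining $D^\lambda_{H_0,n}$, but that uniformity is exactly what Proposition~\ref{prop:action_uniformity} was stated to provide.
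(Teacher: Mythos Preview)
Your skeleton matches the paper's: reduce via Theorem~\ref{thm:cohomology_for_c_small_pairs} to a class in $\mathrm{H}^1(\Gamma_0, D^\lambda_{H_0,n}(M))$, unpack via Proposition~\ref{prop:D_n_product_decomposition}, and shrink $\lambda$ so that a formal primitive converges using Proposition~\ref{prop:action_uniformity}. But step~(iii) contains a real misconception. Under the product decomposition, $\gamma$ acts \emph{diagonally}: by shift on the $\binom{x}{m}$'s \emph{and} by its ordinary nontrivial action on the coefficients $m_n \in D^{\lambda}_{H_0,n}(\mathcal{C}^{\lambda\han}(H_0,M))$. Your identity $(\gamma-1)(\sum a_m\binom{x}{m}) = \sum a_m\binom{x}{m-1}$ is therefore false here, and there is no clean ``constant term residue''. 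The paper writes down the correct formal primitive
\[
F(x) = \sum_{k\geq 0}(-1)^k\sum_{n\geq 0} (\gamma-1)^k(\gamma^{-k}(m_n))\binom{x}{n+k+1},
\]
which satisfies $(\gamma-1)F = f$ \emph{exactly}, with no leftover term. The only issue is convergence: one has $\val_\lambda \geq k(s-p^\lambda) + O(1)$ on the general term, and this is repaired by choosing $\lambda'$ with $p^{\lambda'} < s$ (no passage to $\gamma^{p^l}$ is needed for convergence).

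Because there is no residue, your induction on $\dim G$ in step~(v) is neither needed nor well-posed (the putative residue is an element, not a lower-dimensional cohomology class of the same shape). What does remain is exactly the obstacle you flag: the pair $(G_l,\lambda')$ is not $c$-small, so Theorem~\ref{thm:cohomology_for_c_small_pairs} is unavailable at the target. The paper does not try to exhibit the coboundary directly in $\mathcal{C}^{\lambda'\han}(G_l,M)$. Instead it invokes Lemma~\ref{lem:small_action}(1), which guarantees that the Tate--Sen descent still works at radius $\lambda'$ provided one descends to $H_l$ rather than $H_0$; this yields a decomposition of $\mathrm{H}^1(G_0,\mathcal{C}^{\lambda'\han}(G_0,M))$ in which the image of $f$ lies in $\ker\bigl(\mathrm{H}^1(G_0/H_l,D^{\lambda'}_{H_l,n}(M)) \to \mathrm{H}^1(\Gamma_0,D^{\lambda'}_{H_l,n}(M))\bigr)$. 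By inflation--restriction this kernel is $\mathrm{H}^1(H_0/H_l,\,\cdot\,)$, which dies upon restriction to $G_l$. So the endgame is inflation--restriction for the finite quotient $H_0/H_l$, not an induction on dimension.
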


\begin{proof}
Recall that in \ref{thm:cohomology_for_c_small_pairs} we have shown an isomorphism for $n \geq n(M,\lambda)$
$$\mathrm{H}^1(G_0,\mathcal{C}^{\lambda \han}(G_0,M)) \cong \mathrm{H}^1(\Gamma_0,D_{H_0,n}(M)),$$

and so $f \in \mathrm{H}^1(\Gamma_0,D_{H_0,n}(M)).$ Using $\mathrm{H}^1(\Gamma_0,D_{H_0,n}(M)) \cong D_{H_0,n}(M)/(\gamma-1),$ we may think of $f$ as an element of $D_{H_0,n}(M)$.
Let $s = s(G_0,n)$ be the constant appearing in Proposition \ref{prop:action_uniformity}, and choose $\lambda'$ small enough that $p^{\lambda'} < s$. Choose $l \geq 0$ large enough so that 1 of Lemma \ref{lem:small_action} applies. We are going to show that $f$ is mapped to $0$ in $\mathrm{H}^1(G_l,\mathcal{C}^{\lambda' \han}(G_l,M)).$

By Proposition \ref{prop:D_n_product_decomposition} we may write $f$ as function (in the variable of $\Gamma_0$)
$$f(x) = \sum_{n \geq 0}{m_n}\binom{x}{n}$$
where $m_n \in D^{\lambda}_{H_0,n}(\mathcal{C}^{\lambda \han}(H_0,M))$ and $\val_\lambda(m_n) - \lfloor p^\lambda n \rfloor \rightarrow \infty.$ 

Consider now the function given by
$$F(x) = \sum_{k \geq 0} (-1)^k \sum_{n \geq 0} (\gamma-1)^k(\gamma^{-k}(m_n))\binom{x}{n+k+1}.$$
Recalling that $\gamma$ acts on $\binom{x}{n+k+1}$ by shifts, one easily checks that $(\gamma-1)(F) = f.$ The problem is that this sum is not guaranteed to converge in $D^{\lambda}_{H_0,n}(M)$. Indeed, what we have is
$$\val_\lambda((\gamma-1)^k(m_n)) \geq sk + \val_{\lambda}(m_n)$$
and
$$\val_\lambda(\binom{x}{n+k+1}) = -\lfloor p^\lambda (n+k+1)\rfloor$$
so we get the estimate
$$\val_\lambda((\gamma-1)^k(\gamma^{-k}(m_n))\binom{x}{n+k+1}) \geq k(s-p^\lambda) + (\val_{\lambda}(m_n) - \lfloor p^\lambda n \rfloor) + O(1).$$
This estimate would be good enough (i.e. tend to $\infty$ as $k, n \rightarrow \infty$) if we had $s > p^\lambda.$ Unfortunately, there is no reason why this would be the case. 

There is a trick which fixes this problem: the estimate is good enough when $\lambda$ is replaced with $\lambda',$ because $s > p^{\lambda'}$. In other words, by the same estimate we see that under the natural map 
$$D^\lambda_{H_0,n}(M) \rightarrow D^{\lambda'}_{H_l,n}(M)$$

the sum does converge (using $\val_{\lambda'} \geq \val_{\lambda}$). This means that the image of $f$ in $D^{\lambda'}_{H_l,n}(M)$ lies in the image of $\gamma-1$. 
Thus, decomposing as in \ref{prop:H_cohomology_vanishing} and the subsequent paragraph to get an isomorphism of $\mathrm{H}^1(G_0,\mathcal{C}^{\lambda' \han}(G_0,M))$ with

$$\mathrm{H}^1(G_0/H_l,X_{H_l,n}\otimes_{\Lambda_{H_l,n}} D^{\lambda'}_{H_l,n}(M)) \oplus \mathrm{H}^1(G_0/H_l,D^{\lambda'}_{H_l,n}(M)),$$
we obtain that under the natural map
$$\mathrm{H}^1(G_0,\mathcal{C}^{\lambda \han}(G_0,M))\rightarrow \mathrm{H}^1(G_0,\mathcal{C}^{\lambda' \han}(G_0,M))$$
the element $f$ lands in 
$$\ker(\mathrm{H}^1(G_0/H_l,D^{\lambda'}_{H_l,n}(M))\rightarrow \mathrm{H}^1(\Gamma_0,D^{\lambda'}_{H_l,n}(M))).$$

By the inflation restriction sequence, this is the same as $\mathrm{H}^1(H_0/H_l,D^{\lambda'}_{H_l,n}(M)^{\Gamma_0})$. Thus composing with the restriction map to $\mathrm{H}^1(G_l,\mathcal{C}^{\lambda' \han}(G_0,M))$ (and hence also with the further composition to $\mathrm{H}^1(G_l,\mathcal{C}^{\lambda' \han}(G_l,M))$) we see that $f$ is mapped to zero, as required!
\end{proof}

\begin{remark}
\label{rem:strong_local_analyticity} The proof shows that vanishing of $\R^{i}_{\la}(M)$ for $i\geq 1$ is true in the strong sense, namely, for each pair $(G_0, \lambda)$ in the direct limit $$R^i_{\la}(M) = \varinjlim_{G_0,\lambda} \mathrm{H}^i(G_0,\mathcal{C}^{\lambda \han}(G_0,M))$$
 there exists some other pair $(G_0',\lambda')$ so that the entire map $$\mathrm{H}^i(G_0,\mathcal{C}^{\lambda \han}(G_0,M)) \rightarrow \mathrm{H}^i(G_0',\mathcal{C}^{\lambda' \han}(G_0',M))$$ is zero.
\end{remark}

\section{Applications}

In this section we shall give three applications of our methods. For this we shall need to introduce a few objects which are standard in $p$-adic Hodge theory. For more details, we refer the reader to \cite{Co08} or $\mathsection{2.1}$ of \cite{Po22b}. Let $\mathbf{C}_p$ be the completion of the algebraic closue of $\QQ_p$. Let $\mathbf{C}_p^\flat$ be its tilt with ring of integers $\mathbf{C}_p^{\flat,+}$. Let $\A_{\inf}$ be the Witt vectors of $\mathbf{C}_p^{\flat,+}$. Let $\varepsilon = (\zeta_p,\zeta_p^2,...)$ be a sequence of compatible $p$-power roots of unity and let $\varpi = \varepsilon - 1 \in \mathbf{C}^{\flat,+}$ so that $[\varpi] \in \A_{\mathrm{inf}}$.
The ring $\widetilde{\mathbf{A}}^{(0,r],\circ}$ is defined to be the $p$-adic completion of $\A_{\mathrm{inf}}\langle p/[\varpi]^{1/r}\rangle$. The completion is $p$-adic, but we take the $[\varpi]$-adic topology. We define $\widetilde{\mathbf{A}}^{(0,r]}$ as $\widetilde{\mathbf{A}}^{(0,r],\circ}[1/[\varpi]]$ with the $[\varpi]$-adic topology.
We define also the ring $\widetilde{\mathbf{A}}^{[s,r]}:=\A_{\mathrm{inf}}\langle [\varpi]^{1/s}/p, p/[\varpi]^{1/r}\rangle,$ with the completion and topology being $p$-adic or $[\varpi]$-adic (they are the same) and $\widetilde{\mathbf{B}}^{[s,r]}:=\widetilde{\mathbf{A}}^{[s,r]}[1/p] = \widetilde{\mathbf{A}}^{[s,r]}[1/[\varpi]].$ Each ring $R$ just introduced has a continuous 
action of $\mathrm{Gal}(\overline{\QQ}_p/\QQ_p)$. Let $K$ be a finite extension of $\QQ_p$. Let $\QQ_p^{\mathrm{cyc}} = \QQ_p(\zeta_{p^\infty}),$ $K^{\cyc} = K\QQ_p^{\cyc}$ and $H_K = \mathrm{Gal}(\overline{\QQ}_p/K^{\cyc})$. We set $R_{K} := R^{H_K}$ so that $R_{K}$ 
has an action of $\mathrm{Gal}(\overline{\QQ}_p/K)/H_K \cong \mathrm{Gal}(K^{\cyc}/K)$. This latter group we call $\Gamma_K$ and it is isomorphic to an open subgroup of $\ZZ_p^\times$ via the cyclotomic character. With this notation we have $\widetilde{\mathbf{E}}_{K} := \widetilde{\mathbf{A}}^{(0,r]}_{K}/p$ (it does not depend on $r$). This field was mentioned in Example
\ref{example:loc_analytic_elems}.3 when $K=\QQ_p$. The field $\widetilde{\mathbf{E}}_{\QQ_p}$ can be equivalently defined as the $X$-adic completion of $\cup_n \FF_p((X^{1/p^n}))$.
The action of $\Gamma_{\QQ_p} = \ZZ_p^\times$ is given as follows: an element $a$ acts on $f(X) \in \widetilde{\mathbf{E}}_{\QQ_p}$ by the formula $(a\cdot f)(X) = f((1+X)^a -1).$ We also have "deperfected" rings defined as follows. We let $\mathbf{A}_{\QQ_p}$ be the $p$-adic completion of $\ZZ_p[[T]][1/T]$. It is naturally embedded into $\widetilde{\mathbf{A}} = W(\mathbf{C}_p^\flat)$ by mapping $T$ to the element $[\varepsilon]-1 \in \A_{\inf}$ which lifts $\varpi$. We have $\mathbf{A}_{\QQ_p}/p \cong \FF_p((X)) \subset \widetilde{\mathbf{E}}_{\QQ_p}.$ To each $K$ one can associate the field of norms $\mathbf{E}_K \subset \widetilde{\mathbf{E}}_K$ so that $\mathbf{E}_{\QQ_p} = \FF_p((X)),$ and there is a standard way to define rings $\mathbf{A}_{K}$ containing $\mathbf{A}_{\QQ_p}$ so that $\mathbf{A}_{K}/p \cong \mathbf{E}_K$ (see $\mathsection{6}$ of \cite{Co08}).
The ring $\widetilde{\mathbf{A}}^{(0,r],\circ}$ embeds into $\widetilde{\mathbf{A}}$ and one sets $\mathbf{A}^{(0,r],\circ}_{K} = \mathbf{A}^{(0,r],\circ} \cap \mathbf{A}_{K}$. We then let $\mathbf{A}^{(0,r]}_{K} = \mathbf{A}^{(0,r],\circ}_{K}[1/T]$. We let $\mathbf{A}^{[s,r]}_{\QQ_p}$ be the completion of the image of $\mathbf{A}^{(0,r]}_{K}$ in $\widetilde{\mathbf{A}}^{(0,r]}_{K}$. Finally, we set $\mathbf{B}^{(0,r]}_{K} = \mathbf{A}^{(0,r]}_{K}[1/p]$ and $\mathbf{B}^{[s,r]}_{K} = \mathbf{A}^{[s,r]}_{K}[1/p]$. In addition to the $\Gamma_K$-action above, these rings are endowed with a $\Gamma_K$-equivariant Frobenius operator $\varphi$ which maps rings defined via an interval $I$ to rings on the interval $p^{-1}I$. We let also $K^{\LT}$ denote a Lubin-Tate extension of $K$, $H_{\LT} = \mathrm{Gal}(\overline{\QQ}_p/K^{\LT})$ and $\Gamma_{\LT} = \mathrm{Gal}(\overline{\QQ}_p/K)/H_{\LT}.$

\begin{proposition}
\label{prop:tate_sen_axioms_verification}
The Tate-Sen axioms (TS1)-(TS4) of $\mathsection{3.2}$ are satisfied in the following cases: 

1. When $\WLambda = \widetilde{\mathbf{A}}^{(0,r]}_{\QQ_p}$, $\WLambda^+ = \widetilde{\mathbf{A}}^{(0,r],\circ}_{\QQ_p}$ and $\Lambda_{n} = \varphi^{-n}(\mathbf{A}^{(0,p^{-n}r]}_{\QQ_p})$ for $1/r \in \ZZ[1/p]_{\geq 0}$ with $r < 1$ (we omit subscripts $H$ in $\Lambda_{H,n}$ because $H_0 = 1$ in this case).

2.  When $\WLambda = \widetilde{\mathbf{A}}^{(0,r],H_{{LT}}}$, $\WLambda^+ = \widetilde{\mathbf{A}}^{(0,r],\circ}\cap \WLambda$ and $\Lambda_{H_L,n} = \varphi^{-n}(\mathbf{A}^{(0,p^{-n}r]}_{L})$ for $1/r \in \ZZ[1/p]_{\geq 0}$ with $r < 1$.
\end{proposition}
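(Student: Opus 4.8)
The plan is to reduce the verification to the analysis of the Tate--Sen method already carried out for the rational versions of these rings in \cite{BC08}, \cite{Po22a} and \cite{Po22b}, and then to upgrade those statements to the integral level --- with respect to the $T$-adic (equivalently $[\varpi]$-adic) valuation --- demanded by the formalism of $\mathsection 3.1$; the axiom (TS4), which is not part of the original Tate--Sen package, is treated separately at the end. Throughout, the localization data is the classical one. In case 1 we have $H_0 = 1$, the character $\chi$ is the cyclotomic character on $\Gamma_{\QQ_p} = \ZZ_p^\times$, and $R_n := R_{H_0,n}$ is the overconvergent normalized-trace projection onto $\Lambda_n = \varphi^{-n}(\mathbf{A}^{(0,p^{-n}r]}_{\QQ_p})$, as in \cite{BC08}. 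In case 2 one takes $\chi = \chi_{\cyc}|_{\Gamma_{\LT}}$, so that $H_0 = H_L$ records the non-cyclotomic part of $\Gamma_{\LT}$ inside $\widetilde{\mathbf{A}}^{(0,r],H_{\LT}}$, and $\Lambda_{H_L,n} = \varphi^{-n}(\mathbf{A}^{(0,p^{-n}r]}_L)$ is the corresponding cyclotomic decompletion, following \cite{Po22b}. First I would recall these objects and the maps $R_{H,n}$ and check that they preserve the chosen rings of definition $\WLambda^+$.

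For (TS1): in case 1 it is vacuous, since $H_0 = 1$ lets us take $\alpha = 1$ and any $c_1 > 0$. In case 2 it is the statement that the extension cut out by $H' \subset H_0$ has uniformly bounded different, which is precisely the Ax--Sen--Tate and Tate normalized-trace input used in \cite{BC08}; the only thing to add is that the element $\alpha$ can be produced with $\val(\alpha) > -c_1$ for the $[\varpi]$-adic $\val$, which is the same ramification estimate read off $[\varpi]$-adically. For (TS2) and (TS3): properties (1)--(3) of the $\Lambda_{H,n}$ and $R_{H,n}$ are formal; the boundedness (4) of $R_{H,n}$ by $c_2$, the convergence (5), and the invertibility of $\gamma - 1$ on $X_{H,n} = (1-R_{H,n})(\WLambda^H)$ with inverse bounded by $c_3$ (which is (TS3)) are all established with $\widetilde{\mathbf{B}}$-coefficients in \cite{BC08} and \cite{Po22a} for case 1 and in \cite{Po22b} for case 2. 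To descend to $\WLambda^+$ with the $[\varpi]$-adic valuation I would check that each of these maps, and the inverse of $\gamma - 1$, sends the unit ball into a bounded multiple of the unit ball; concretely this amounts to rewriting $R_{H,n}$ and $(\gamma-1)^{-1}$ as $[\varpi]$-adically convergent series with coefficients of controlled valuation, which is exactly what keeping track of the constants $c_1, c_2, c_3$ in the cited proofs yields.

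The genuinely new axiom is (TS4): for $n \geq n(G')$, $\gamma \in G'$ and $x \in \Lambda_{H',n}$ one needs $\val((\gamma-1)(x)) \geq \val(x) + t$ for some $t = t(G',n) > 0$. This I would deduce from the explicit description of the $\Gamma$-action on $\Lambda_{H',n}$. The ring $\varphi^{-n}(\mathbf{A}^{(0,p^{-n}r]}_K)$ is generated over its ring of constants by $\varphi^{-n}(T)$ and its inverse, and $\varphi^{-n}(T)$ has strictly positive $[\varpi]$-adic valuation; moreover $\gamma$ acts on $T = [\varepsilon]-1$ by $T \mapsto (1+T)^{\chi_{\cyc}(\gamma)}-1$, so for $\gamma$ in a sufficiently small subgroup $G'$ the binomial expansion gives $(\gamma-1)(\varphi^{-n}(T)) \in [\varpi]^{t}\,\widetilde{\mathbf{A}}^{(0,r],\circ}$ with $t$ growing with $n(\gamma)$. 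The Leibniz rule $(\gamma-1)(ab) = (\gamma-1)(a)\,b + \gamma(a)(\gamma-1)(b)$ then propagates this estimate to all of $\Lambda_{H',n}$ and, after multiplying by powers of $[\varpi]$, to its whole unit ball. In case 2 the same computation works verbatim, since $\chi = \chi_{\cyc}|_{\Gamma_{\LT}}$ means the cyclotomic variable is still acted on by the binomial formula.

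The main obstacle is the integral bookkeeping. The classical statements are phrased for $p$-adic Banach or Fréchet spaces over $\QQ_p$, and one must verify that the normalized traces $R_{H,n}$, the trace elements of (TS1), and above all the bounded inverse of $\gamma-1$ in (TS3) respect the rings of definition for the $[\varpi]$-adic valuation rather than the $p$-adic one --- equivalently, that the constants can be chosen uniformly and compatibly with the $\varpi$-adic topology. This is the point at which one must reopen the arguments of \cite{BC08}, \cite{Po22a} and \cite{Po22b} rather than simply invoke them; once this is done, (TS4) is an easy explicit computation as above.
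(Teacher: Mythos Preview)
Your overall plan is right and essentially matches the paper for (TS1)--(TS3) and for (TS4) in case~1. Two points deserve correction.

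First, the ``main obstacle'' you flag --- redoing the bounds for $R_{H,n}$ and $(\gamma-1)^{-1}$ in the $[\varpi]$-adic rather than $p$-adic topology --- is not actually new work: $\mathsection 5.1$ of \cite{Po22a} already establishes (TS1)--(TS3) for exactly these integral rings with the $[\varpi]$-adic valuation, and the paper simply cites this (together with Lemma~10.1 of \cite{Co08} for (TS1) in case~2). So there is nothing to reopen there.

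Second, and more seriously, your treatment of (TS4) in case~2 has a gap. You claim the computation ``works verbatim'' because the cyclotomic variable is still acted on by the binomial formula. But $\Lambda_{H_L,n} = \varphi^{-n}(\mathbf{A}^{(0,p^{-n}r]}_L)$ is \emph{not} generated over its constants by $\varphi^{-n}(T)$ and its inverse: the ring $\mathbf{A}_L$ is a Cohen ring for the field of norms $\mathbf{E}_L$, which is a finite separable extension of $\FF_p((X))$, so $\mathbf{A}_L$ is a genuine finite extension of $\mathbf{A}_{\QQ_p}$ containing elements with no explicit description in terms of $T$. The binomial identity for $(\gamma-1)(T)$ alone does not control these. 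The paper's fix is to exploit finite freeness: $\Lambda_{H_L,n}$ is finite free over $\Lambda_n$, so one chooses a $\Lambda_n$-basis $e_1,\ldots,e_d$, shrinks to a subgroup $\Gamma'$ small enough that $\val((\gamma'-1)(e_i)) > c$ for a generator $\gamma'$, and then combines the Leibniz rule with the case~1 bound on $\Lambda_n$ (using that $\gamma-1$ divides $\gamma'-1$ to pass back to the original generator). This is the missing step in your argument.
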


\begin{proof}
Case 1: the axioms (TS1)-(TS3) are verified in $\mathsection{5.1}$ of \cite{Po22b}. It suffices to show that for any $a\in 1+p\ZZ_p \subset \Gamma_{\QQ_p}$ and $k\in \ZZ$
we have $$\val((a-1)(\varphi^{-n}(T^k)) \geq \val(\varphi^{-n}(T^k)) + c$$ for some positive constant $c$.
Indeed, one computes (using the formula $a(T)=(1+T)^a-1)$) that
$$(a-1)(\varphi^{-n}(T^k)) = \varphi^{-n}(T^k)\cdot\varphi^{-n}((a+\sum_{m \geq 1}\binom{a}{m+1}T)^k-1).$$
As $a \equiv_p 1$ it follows that $$\val(\varphi^{-n}((a+\sum_{m \geq 1}\binom{a}{m+1}T)^k-1)) \geq \min(\val(p),\val(\varphi^{-n}(T))) > 0$$ as required.

Case 2: Again (TS2) and (TS3) are checked in \cite{Po22b}. (TS1) is shown in Lemma 10.1 of \cite{Co08}.
For (TS4), we argue as follows. The ring $\Lambda_{H_L,n}$ is endowed with the action of some finite index subgroup of $\Gamma_{\QQ_p}.$ It is finite free over $\Lambda_{n}$ so one may choose a 
$\Lambda_{n}$-basis $e_1,...,e_d.$ By possibly choosing an even smaller subgroup $\Gamma'$ of $\Gamma_{\QQ_p}$ one can arrange that $\Gamma' \subset 1 +p\ZZ_p$ and that the action of $\gamma'-1$ of a generator $\gamma'$ of $\Gamma'$ has $\val((\gamma'-1)(e_i) > c$ for some constant $c$. Now if $\gamma$ is a generator $1+p\ZZ_p$ then we already know $\val^{\op}_{\Lambda_n}(\gamma-1) \geq c'$ for some $c' > 0$. Since $\gamma-1$ divides $\gamma'-1$ we see (TS4) holds with $t = \min(c,c').$
\end{proof}

\subsection{Computation of locally analytic elements in $\widetilde{\mathbf{E}}_{\QQ_p}$}

Recall the main result of \cite{BR22}:

\begin{theorem}
\label{thm:E_decompletion}
We have $\widetilde{\mathbf{E}}^\la_{\QQ_p} = \cup_n \FF_p((X^{1/p^n})).$
\end{theorem}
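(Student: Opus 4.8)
The strategy is to apply \Cref{main_thm} in a characteristic $0$ setting and then push the information down to characteristic $p$, exactly along the "template" described in the introduction. First I would set $\WLambda = \widetilde{\mathbf{A}}^{(0,r]}_{\QQ_p}$ with $\WLambda^+ = \widetilde{\mathbf{A}}^{(0,r],\circ}_{\QQ_p}$, which by \Cref{prop:tate_sen_axioms_verification}(1) satisfies (TS1)--(TS4), and note that $\Lie(\Gamma_{\QQ_p})$ is abelian, so \Cref{main_thm} applies to $M = \WLambda$ itself (rank one). This gives $\R^1_{\la}(\widetilde{\mathbf{A}}^{(0,r]}_{\QQ_p}) = 0$ and identifies $(\widetilde{\mathbf{A}}^{(0,r]}_{\QQ_p})^{\la}$ as a descent; in fact, by part 1 and the description of $\Lambda_n = \varphi^{-n}(\mathbf{A}^{(0,p^{-n}r]}_{\QQ_p})$ as the layers of the Tate--Sen data, one computes $(\widetilde{\mathbf{A}}^{(0,r]}_{\QQ_p})^{\la} = \cup_n \varphi^{-n}(\mathbf{A}^{(0,p^{-n}r]}_{\QQ_p})$ (this is the overconvergent analogue of the known rational computation).

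Next I would set up the reduction mod $p$. The ring $A = \widetilde{\mathbf{A}}^{(0,r],\circ}_{\QQ_p}$ has $\overline{A} = A/p$, and one must check $\overline{A}$ is itself a $\ZZ_p$- (indeed $\FF_p$-)Tate algebra whose completion recovers $\widetilde{\mathbf{E}}_{\QQ_p}$: concretely $\widetilde{\mathbf{A}}^{(0,r],\circ}_{\QQ_p}/p$ is the $[\varpi]$-adic completion of something whose reduction is, up to the identification $T \leftrightarrow X$, the $X$-adic completion of $\cup_n \FF_p((X^{1/p^n}))$ — i.e.\ exactly $\widetilde{\mathbf{E}}_{\QQ_p}$. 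The vanishing $\R^1_{\la}(A) = 0$ from \Cref{main_thm}, combined with the long exact sequence for locally analytic vectors applied to $0 \to A \xrightarrow{p} A \to \overline{A} \to 0$, yields that $A^{\la}/p \xrightarrow{\sim} \overline{A}^{\la} = \widetilde{\mathbf{E}}_{\QQ_p}^{\la}$. So the characteristic $p$ answer is literally the reduction mod $p$ of the characteristic $0$ answer.

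It then remains to compute $(\cup_n \varphi^{-n}(\mathbf{A}^{(0,p^{-n}r]}_{\QQ_p}))/p$ and check it equals $\cup_n \FF_p((X^{1/p^n}))$. Here $\mathbf{A}^{(0,s],\circ}_{\QQ_p}/p$ is (a bounded-below piece of) $\FF_p((X))$, $\varphi^{-n}$ corresponds to $X \mapsto X^{1/p^n}$, and passing to the colimit over $n$ (and over $r$, which only shrinks the radius) collapses the overconvergence condition: every element of $\FF_p((X^{1/p^n}))$ already lies in the reduction of $\varphi^{-n}(\mathbf{A}^{(0,p^{-n}r]}_{\QQ_p})$ for suitable parameters, since mod $p$ the overconvergent and bounded conditions agree on Laurent series. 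Conversely the reduction is visibly contained in $\cup_n \FF_p((X^{1/p^n}))$ because each $\varphi^{-n}(\mathbf{A}^{(0,p^{-n}r]}_{\QQ_p})/p \subset \FF_p((X^{1/p^n}))$.

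\textbf{Main obstacle.} The genuinely delicate point is the characteristic $0$ computation $(\widetilde{\mathbf{A}}^{(0,r]}_{\QQ_p})^{\la} = \cup_n \varphi^{-n}(\mathbf{A}^{(0,p^{-n}r]}_{\QQ_p})$: \Cref{main_thm}(1) only says $M^{\la}$ is a descent to $\WLambda^{\la}$, so one still has to identify $\WLambda^{\la}$ itself with the stated union. The inclusion $\supseteq$ follows from \Cref{cor:descent_loc_analytic} once one knows the $\gamma - 1$ action on $\Lambda_n = \varphi^{-n}(\mathbf{A}^{(0,p^{-n}r]}_{\QQ_p})$ is topologically nilpotent with the right rate (this is essentially the (TS4) verification in \Cref{prop:tate_sen_axioms_verification}); the reverse inclusion is where one must use that the descent $D^{\lambda}_{H_0,n}$-construction, combined with the explicit $R_{H_0,n}$ projectors of the Tate--Sen formalism for these rings, exhausts no more than $\cup_n \Lambda_n$ — i.e.\ a genuinely local-analytic element of $\widetilde{\mathbf{A}}^{(0,r]}_{\QQ_p}$ must already be overconvergent of some finite level. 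I would handle this by transporting the known rational statement $(\widetilde{\mathbf{B}}^{(0,r]}_{\QQ_p})^{\la,\mathrm{pro}} = \cup_n \varphi^{-n}(\mathbf{B}^{(0,p^{-n}r]}_{\QQ_p})$ (or its la-analogue) to the integral level via the lattice $\widetilde{\mathbf{A}}^{(0,r],\circ}_{\QQ_p}$ and the fact that locally analytic vectors commute with the relevant intersections, then intersecting with $\widetilde{\mathbf{A}}^{(0,r]}_{\QQ_p}$; care is needed that taking la-vectors is compatible with this intersection, which is where the finite-freeness and the open mapping theorem inputs behind the long exact sequence get used.
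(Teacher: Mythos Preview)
Your proposal is correct and follows essentially the same route as the paper: apply \Cref{main_thm} to $\widetilde{\mathbf{A}}^{(0,r]}_{\QQ_p}$ to obtain $\R^1_{\la}=0$, feed the short exact sequence $0\to \widetilde{\mathbf{A}}^{(0,r]}_{\QQ_p}\xrightarrow{p}\widetilde{\mathbf{A}}^{(0,r]}_{\QQ_p}\to \widetilde{\mathbf{E}}_{\QQ_p}\to 0$ into the long exact sequence, and then identify $(\widetilde{\mathbf{A}}^{(0,r]}_{\QQ_p})^{\la}$ by intersecting a known rational computation of Berger with the integral ring. The only cosmetic discrepancies are that the paper uses the closed-interval ring $\widetilde{\mathbf{B}}^{[s,r]}_{\QQ_p}$ (for which Berger's theorem is stated) rather than $\widetilde{\mathbf{B}}^{(0,r]}_{\QQ_p}$, and works directly with $\widetilde{\mathbf{A}}^{(0,r]}_{\QQ_p}$ (so that the quotient by $p$ is already $\widetilde{\mathbf{E}}_{\QQ_p}$, not its ring of integers) --- your identification of the ``main obstacle'' and its resolution via the intersection \Cref{lem:intersected_period_rings} is exactly what the paper does.
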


We shall now explain how to derive this result by reducing rather formally to a previously known result of Berger in characteristic 0 which is proven with aid of $p$-adic analysis. Though as a whole the proof of \cite{BR22} is simpler than ours, the method we present here illustrates our hope of using this new technique of linking analytic vectors in characteristic 0 and characteristic $p$ in other contexts as well.

\begin{lemma}
\label{lem:intersected_period_rings}
We have $\widetilde{\mathbf{A}}^{(0,r]}_{\QQ_p} \cap \varphi^{-n}(\mathbf{B}^{[p^{-n}s,p^{-n}r]}_{\QQ_p}) = \varphi^{-n}(\mathbf{A}^{(0,p^{-n}r]}_{\QQ_p})$.
\end{lemma}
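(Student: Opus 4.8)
The plan is to prove this by a double inclusion, where the hard direction is showing that an element of $\widetilde{\mathbf{A}}^{(0,r]}_{\QQ_p}$ which also lies in $\varphi^{-n}(\mathbf{B}^{[p^{-n}s,p^{-n}r]}_{\QQ_p})$ must actually lie in the integral, bounded-denominator ring $\varphi^{-n}(\mathbf{A}^{(0,p^{-n}r]}_{\QQ_p})$. First I would reduce to the case $n = 0$ by applying $\varphi^n$ throughout: since $\varphi$ is an isomorphism onto its image sending rings on the interval $I$ to rings on $pI$, the claim is equivalent to $\widetilde{\mathbf{A}}^{(0,p^nr]}_{\QQ_p} \cap \mathbf{B}^{[s,r]}_{\QQ_p} = \mathbf{A}^{(0,r]}_{\QQ_p}$ (after relabeling radii). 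The inclusion $\supseteq$ is immediate from the definitions, since $\mathbf{A}^{(0,r]}_{\QQ_p} = \mathbf{A}^{(0,r],\circ}_{\QQ_p}[1/T]$ sits inside both $\widetilde{\mathbf{A}}^{(0,r]}_{\QQ_p}$ and $\mathbf{B}^{[s,r]}_{\QQ_p} = \mathbf{A}^{[s,r]}_{\QQ_p}[1/p]$ (note $\mathbf{A}^{(0,r]}_{\QQ_p} \subset \mathbf{A}^{[s,r]}_{\QQ_p}$ because $\mathbf{A}^{[s,r]}_{\QQ_p}$ is defined as the completion of the image of $\mathbf{A}^{(0,r]}_{\QQ_p}$).

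For the reverse inclusion, let $x$ lie in the intersection. Writing $x \in \mathbf{B}^{[s,r]}_{\QQ_p}$, the key point is to control the $p$-adic denominator: I would show $x \in \mathbf{A}^{[s,r]}_{\QQ_p}$, i.e. that $x$ is actually integral, using that $x \in \widetilde{\mathbf{A}}^{(0,r]}_{\QQ_p}$ — more precisely, one shows $x \in \widetilde{\mathbf{A}}^{(0,r],\circ}_{\QQ_p}[1/[\varpi]]$, and the integral structure of $\widetilde{\mathbf{A}}^{(0,r],\circ}$ forces the coefficients of $x$, expanded in the relevant Laurent-type expansion, to have bounded $p$-denominators only through powers of $T$ (equivalently $[\varpi]$). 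Concretely: an element of $\mathbf{B}^{[s,r]}_{\QQ_p}$ that lies in $\widetilde{\mathbf{A}}^{(0,r]}_{\QQ_p}$ has all its "Amice–Tate" coefficients $a_k \in \ZZ_p$ (rather than $\QQ_p$) once multiplied by an appropriate power of $T$, because $\widetilde{\mathbf{A}}^{(0,r]}_{\QQ_p} = \widetilde{\mathbf{A}}^{(0,r],\circ}_{\QQ_p}[1/[\varpi]]$ and the embedding $\mathbf{A}_{\QQ_p} \hookrightarrow \widetilde{\mathbf{A}}$ identifies $\mathbf{A}^{(0,r],\circ}_{\QQ_p} = \mathbf{A}^{(0,r],\circ} \cap \mathbf{A}_{\QQ_p}$ compatibly. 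Thus $x \in T^{-N}\mathbf{A}^{[s,r],\circ}_{\QQ_p}$ for some $N$, hence $x \in \mathbf{A}^{[s,r]}_{\QQ_p}$; and since $x \in \widetilde{\mathbf{A}}^{(0,r]}_{\QQ_p}$, comparing with $\mathbf{A}^{(0,r]}_{\QQ_p} = \widetilde{\mathbf{A}}^{(0,r],\circ}_{\QQ_p} \cap \mathbf{A}^{[s,r]}_{\QQ_p}$ after inverting $T$ gives $x \in \mathbf{A}^{(0,r]}_{\QQ_p}$.

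The step I expect to be the main obstacle is making the denominator-bounding argument rigorous: namely, showing that the intersection $\widetilde{\mathbf{A}}^{(0,r],\circ}_{\QQ_p} \cap \mathbf{A}^{[s,r]}_{\QQ_p} = \mathbf{A}^{(0,r]}_{\QQ_p}$ (with $T$ inverted), which is essentially a statement that the deperfected integral ring is cut out inside the completed rings by simultaneously imposing the $(0,r]$-bound and the $[s,r]$-overconvergence. This is a known fact about these period rings — it follows from the description of $\mathbf{A}^{(0,r]}_{\QQ_p}$ as the ring of Laurent series $\sum_k a_k T^k$ with $a_k \in \ZZ_p$ convergent and bounded on the annulus with outer radius governed by $r$, intersected against the honest integrality condition coming from $\widetilde{\mathbf{A}}^{(0,r],\circ}$ — and I would cite the relevant structure results from $\mathsection{6}$ of \cite{Co08} or $\mathsection{2.1}$ of \cite{Po22a} rather than reprove them. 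Once that structural fact is in hand, the lemma follows by the chain of inclusions above.
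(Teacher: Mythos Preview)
Your overall skeleton is right: reduce to $n=0$ via $\varphi^n$, note the easy inclusion, and then control $p$-denominators for the hard direction. But the core step --- your ``denominator-bounding argument'' --- is not actually carried out, and the intersection identity you propose to cite, namely $\widetilde{\mathbf{A}}^{(0,r]}_{\QQ_p} \cap \mathbf{A}^{[s,r]}_{\QQ_p} = \mathbf{A}^{(0,r]}_{\QQ_p}$, is not something you will find stated in \cite{Co08} or \cite{Po22a}; it is essentially of the same strength as the lemma itself. The hand-wave that ``coefficients $a_k$ lie in $\ZZ_p$ once multiplied by a power of $T$, because $\widetilde{\mathbf{A}}^{(0,r]}_{\QQ_p}=\widetilde{\mathbf{A}}^{(0,r],\circ}_{\QQ_p}[1/[\varpi]]$'' does not work as stated: you have not explained why an element of $\mathbf{B}^{[s,r]}_{\QQ_p}$ lying in $\widetilde{\mathbf{A}}^{(0,r],\circ}_{\QQ_p}$ must already lie in $\mathbf{A}^{[s,r],\circ}_{\QQ_p}$, and that is exactly the missing content.

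The paper closes this gap by separating the two issues. First it uses the genuinely standard Robba-ring fact $\widetilde{\mathbf{B}}^{(0,r]}_{\QQ_p}\cap \mathbf{B}^{[s,r]}_{\QQ_p}=\mathbf{B}^{(0,r]}_{\QQ_p}$ to place $x$ in $\mathbf{B}^{(0,r]}_{\QQ_p}$; this is the step you skipped. Then the remaining assertion, that $x\in \mathbf{B}^{(0,r]}_{\QQ_p}\cap \widetilde{\mathbf{A}}^{(0,r]}_{\QQ_p}$ forces $x\in \mathbf{A}^{(0,r]}_{\QQ_p}$, is reformulated as the $p$-torsion-freeness of $\widetilde{\mathbf{A}}^{(0,r],\circ}_{\QQ_p}/\mathbf{A}^{(0,r],\circ}_{\QQ_p}$. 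Using $\mathbf{A}^{(0,r],\circ}_{\QQ_p}=\widetilde{\mathbf{A}}^{(0,r],\circ}\cap \mathbf{A}_{\QQ_p}$, that quotient injects into $\widetilde{\mathbf{A}}/\mathbf{A}_{\QQ_p}$, whose $p$-torsion-freeness is just the injectivity of $\FF_p((X))\hookrightarrow \mathbf{C}_p^\flat$. That last reduction is the concrete argument your proposal is missing; once you insert it (together with the preliminary passage to $\mathbf{B}^{(0,r]}_{\QQ_p}$), your outline becomes the paper's proof.
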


\begin{proof}
The inclusion of the right hand side in the left hand side is clear.
Conversely, suppose $x$ lies in the right hand side. By applying $\varphi^n$, we may reduce to the case $n = 0$. So we have $x \in \widetilde{\mathbf{A}}^{(0,r]}_{\QQ_p} \cap\mathbf{B}^{[s,r]}_{\QQ_p},$ whence $x$ lies in the larger ring $\widetilde{\mathbf{B}}^{(0,r]}_{\QQ_p} \cap\mathbf{B}^{[s,r]}_{\QQ_p} = \mathbf{B}^{(0,r]}_{\QQ_p}.$ Thus we reduce to showing $\mathbf{B}^{(0,r]}_{\QQ_p}/\mathbf{A}^{(0,r]}_{\QQ_p} \rightarrow \widetilde{\mathbf{B}}^{(0,r]}_{\QQ_p}/\widetilde{\mathbf{A}}^{(0,r]}_{\QQ_p}$ is injective. This reduces further to showing  $\widetilde{\mathbf{A}}^{(0,r]}/\mathbf{A}^{(0,r]}_{\QQ_p}$ is $p$-torsionfree. In fact it suffices to show $\widetilde{\mathbf{A}}^{(0,r],\circ}/\mathbf{A}^{(0,r],\circ}_{\QQ_p}$ is $p$-torsionfree, because $\widetilde{\mathbf{A}}^{(0,r]}/\mathbf{A}^{(0,r]}_{\QQ_p}$ is its $[\varpi]$-adic localization. 

Next, recall that $\mathbf{A}^{(0,r],\circ}_{\QQ_p} = \widetilde{\mathbf{A}}^{(0,r],\circ}\cap \widetilde{\mathbf{A}}_{\QQ_p}$, so $\widetilde{\mathbf{A}}^{(0,r]}/\mathbf{A}^{(0,r]}_{\QQ_p}$ injects into $\widetilde{\mathbf{A}}/\mathbf{A}_{\QQ_p}$. This will be $p$-torsionfree provided that $\mathbf{A}_{\QQ_p}\otimes {\FF_p} \rightarrow \widetilde{\mathbf{A}} \otimes {\FF_p}$ is injective. But this is easy: it is simply the map $\FF_p((X)) \rightarrow \mathbf{C}^\flat_p$.
\end{proof}

Recall Theorem 4.4 of \cite{Be16}.

\begin{theorem}
\label{thm:bergers_char_0_theorem}
We have $\widetilde{\mathbf{B}}^{[s,r],\la}_{\QQ_p} = \cup_n \varphi^{-n}(\mathbf{B}^{[p^{-n}s,p^{-n}r]}_{\QQ_p}).$
\end{theorem}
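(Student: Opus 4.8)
The plan is to realise $\widetilde{\mathbf{B}}^{[s,r]}_{\QQ_p}$ as an instance of the Tate--Sen formalism of $\mathsection{3.2}$ and to read off both inclusions from it. Take $\WLambda = \widetilde{\mathbf{B}}^{[s,r]}_{\QQ_p}$ (this is already a $\QQ_p$-algebra, so there is no reduction mod $p$ to track), $G = \Gamma_{\QQ_p} \cong \ZZ_p^\times$, $H_0 = 1$, and $\Lambda_n = \varphi^{-n}(\mathbf{B}^{[p^{-n}s,p^{-n}r]}_{\QQ_p})$, with $R_n \colon \WLambda \to \Lambda_n$ the Tate normalised traces for the cyclotomic tower. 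The first task is to verify axioms (TS1)--(TS4) for this pair, which is the ``annulus'' variant of Proposition \ref{prop:tate_sen_axioms_verification}: axioms (TS1)--(TS3) are established in $\mathsection{5}$ of \cite{Po22a} and in \cite{BC08}, while (TS4) follows exactly as in case~1 of Proposition \ref{prop:tate_sen_axioms_verification}, from the formula $\gamma(T) = (1+T)^{\chi(\gamma)}-1$ after applying $\varphi^{-n}$. Observe also that $\mathrm{Lie}(\Gamma_{\QQ_p}) \cong \QQ_p$ is abelian, so this falls under the general framework of Theorem \ref{main_thm}.

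Granting the axioms, the inclusion $\cup_n \varphi^{-n}(\mathbf{B}^{[p^{-n}s,p^{-n}r]}_{\QQ_p}) \subseteq \widetilde{\mathbf{B}}^{[s,r],\la}_{\QQ_p}$ is formal: apply Corollary \ref{cor:descent_loc_analytic} to $M = \WLambda$ regarded as a rank-one free module over itself, for which $D_{H',n}(M) = \Lambda_n$; this gives $\Lambda_n \subseteq \WLambda^{\la}$ for every $n$, and one takes the union.

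The reverse inclusion is the decompletion, and is the substantive part. Let $x \in \widetilde{\mathbf{B}}^{[s,r],\la}_{\QQ_p}$, analytic for some open uniform subgroup of $G$. By Corollary \ref{cor:making_lambda_smaller}, after shrinking that subgroup we may assume $x$ is $G'$-analytic with parameter $\lambda$ satisfying $p^\lambda > c_3$, where $G' = 1+p^{k'}\ZZ_p$ and $c_3$ is the constant of (TS3); write $\gamma$ for a generator of $G'$, so that $n(\gamma) = k'$. Choose $n \geq k'$, and then an open subgroup $G'' \subseteq G'$ small enough that $n(G'') \geq n$; by (TS3), $\gamma-1$ is then bijective on $X_n := \ker R_n$ with $\val\big((\gamma-1)^{-1}(z)\big) \geq \val(z) - c_3$. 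Decompose $x = R_n(x) + y$ with $R_n(x) \in \Lambda_n$ and $y \in X_n$. Since $R_n$ is $\Gamma$-equivariant (axiom (TS2)(3)) and satisfies $\val(R_n(z)) \geq \val(z) - c_2$, the orbit map $u \mapsto \gamma^u(R_n(x)) = R_n(\gamma^u(x))$ is still $\lambda$-analytic, so $R_n(x)$ --- and hence $y = x - R_n(x)$ --- is $G'$-analytic with parameter $\lambda$. The $N$-th binomial coefficient of the orbit map of $y$ is $(\gamma-1)^N(y)$, so $\lambda$-analyticity gives $\val\big((\gamma-1)^N(y)\big) \geq p^\lambda N + O(1)$ with the $O(1)$ uniform in $N$; on the other hand $X_n$ is $\gamma$-stable, and iterating the bound on $(\gamma-1)^{-1}$ applied to $y = (\gamma-1)^{-N}\big((\gamma-1)^N(y)\big)$ gives $\val(y) \geq \val\big((\gamma-1)^N(y)\big) - Nc_3$. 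Putting these together, $N(p^\lambda - c_3) \leq \val(y) + O(1)$ for every $N$; since $p^\lambda > c_3$ this forces $\val(y) = \infty$, i.e.\ $y = 0$. Hence $x = R_n(x) \in \varphi^{-n}(\mathbf{B}^{[p^{-n}s,p^{-n}r]}_{\QQ_p})$, as desired.

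The real obstacle is the verification of the Tate--Sen axioms for the overconvergent annulus ring $\widetilde{\mathbf{B}}^{[s,r]}_{\QQ_p}$ with $\Lambda_n = \varphi^{-n}(\mathbf{B}^{[p^{-n}s,p^{-n}r]}_{\QQ_p})$: constructing the normalised traces $R_n$ with values in the correct deperfected ring and controlling $\gamma-1$ on $\ker R_n$, i.e.\ axioms (TS2)--(TS3), is classical but technical (going back to Colmez's normalised-trace computations and to \cite{BC08}). Once that is in place the decompletion step is largely bookkeeping; its one delicate point is the ordering of the choices: shrinking $G'$ to force $p^\lambda > c_3$ tightens the Tate--Sen window $[n(\gamma), n(G'')]$ available for $n$, but this is harmless because $n(G'')$ can be made as large as one likes by taking $G''$ small enough, so one fixes $G \supseteq G' \supseteq G''$ and $n$ in precisely that order.
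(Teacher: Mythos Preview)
The paper does not prove this theorem at all: it is stated as ``Recall Theorem 4.4 of \cite{Be16}'' and used as a black-box input from Berger's earlier work. So there is no proof in the paper to compare against; your proposal is an independent reconstruction.

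Your argument is essentially the one Berger gives in \cite{Be16}, recast in the Tate--Sen language of $\mathsection 3.2$. The forward inclusion via Corollary \ref{cor:descent_loc_analytic} is exactly right. For the reverse inclusion, your bootstrap --- write $x = R_n(x) + y$, observe $y \in X_n$ is still $\lambda$-analytic because $R_n$ is equivariant and bounded, then play the growth $\val((\gamma-1)^N y) \geq p^\lambda N + O(1)$ against the inverse bound $\val((\gamma-1)^{-1} z) \geq \val(z) - c_3$ on $X_n$ to force $y=0$ once $p^\lambda > c_3$ --- is correct and is the standard mechanism. One small point worth making explicit: the claim that $R_n(x)$ is again $\lambda$-analytic uses that $R_n$ is a bounded linear map, so it sends the Mahler coefficients $a_k$ of the orbit map to $R_n(a_k)$ with $\val(R_n(a_k)) \geq \val(a_k) - c_2$; you assert this but the reader benefits from the one-line justification.

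The genuine work you defer to citations is the verification of (TS1)--(TS3) for the closed-annulus ring $\widetilde{\mathbf{B}}^{[s,r]}_{\QQ_p}$ with $\Lambda_n = \varphi^{-n}(\mathbf{B}^{[p^{-n}s,p^{-n}r]}_{\QQ_p})$; note that Proposition \ref{prop:tate_sen_axioms_verification} in this paper only treats the half-open ring $\widetilde{\mathbf{A}}^{(0,r]}_{\QQ_p}$, so you are right that one must appeal to \cite{BC08} or Colmez's normalised-trace constructions directly rather than to this paper. With that granted, your proof is complete and matches Berger's.
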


We have the following corollary which is the mixed characteristic version of Theorem \ref{thm:E_decompletion} and Theorem \ref{thm:bergers_char_0_theorem}.

\begin{corollary}
\label{cor:decompletion_integral_period_ring}
We have $\widetilde{\mathbf{A}}^{(0,r],\la}_{\QQ_p} = \cup_n \varphi^{-n}(\mathbf{A}^{(0,p^{-n}r]}_{\QQ_p}).$
\end{corollary}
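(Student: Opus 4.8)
The plan is to establish the two inclusions separately. Throughout I fix a real number $s$ with $0 < s < r$ and I take $\WLambda = \widetilde{\mathbf{A}}^{(0,r]}_{\QQ_p}$, which by Proposition \ref{prop:tate_sen_axioms_verification} satisfies the Tate--Sen axioms (TS1)--(TS4) with $H_0 = 1$ and $\Lambda_n = \varphi^{-n}(\mathbf{A}^{(0,p^{-n}r]}_{\QQ_p})$. The idea is that the inclusion $\supseteq$ is a Tate--Sen descent statement already available for $\widetilde{\mathbf{A}}^{(0,r]}_{\QQ_p}$, while the inclusion $\subseteq$ is obtained by transporting a locally analytic vector into the characteristic $0$ Banach ring $\widetilde{\mathbf{B}}^{[s,r]}_{\QQ_p}$, where Berger's computation (Theorem \ref{thm:bergers_char_0_theorem}) applies, and then intersecting back down via Lemma \ref{lem:intersected_period_rings}.

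For the inclusion $\supseteq$, it suffices to show that each $y \in \varphi^{-n}(\mathbf{A}^{(0,p^{-n}r]}_{\QQ_p})$ is $\Gamma'$-locally analytic for a sufficiently small open uniform subgroup $\Gamma' \cong \ZZ_p$ of $\Gamma_{\QQ_p}$; this follows by the argument of Corollary \ref{cor:descent_loc_analytic}. Concretely, the computation carried out to verify (TS4) in the proof of Proposition \ref{prop:tate_sen_axioms_verification} shows that a generator $\gamma$ of a small enough $\Gamma'$ satisfies $\val((\gamma-1)(z)) \geq \val(z) + c$ for all $z \in \varphi^{-n}(\mathbf{A}^{(0,p^{-n}r]}_{\QQ_p})$, with $c > 0$; hence $\val((\gamma-1)^k(y)) \geq \val(y) + kc$, so the Mahler expansion $\gamma^x(y) = \sum_{k \geq 0}(\gamma-1)^k(y)\binom{x}{k}$ of the orbit map lies in $\mathcal{C}^{\lambda\han}(\Gamma',\WLambda)$ for any $\lambda < \log_p c$, that is, $y$ is locally analytic. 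Taking the union over $n$ gives $\cup_n \varphi^{-n}(\mathbf{A}^{(0,p^{-n}r]}_{\QQ_p}) \subseteq \widetilde{\mathbf{A}}^{(0,r],\la}_{\QQ_p}$.

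For the inclusion $\subseteq$, let $x \in \widetilde{\mathbf{A}}^{(0,r],\la}_{\QQ_p}$. I would first record that the natural $\Gamma_{\QQ_p}$-equivariant injection $\iota\colon \widetilde{\mathbf{A}}^{(0,r]}_{\QQ_p} \hookrightarrow \widetilde{\mathbf{B}}^{[s,r]}_{\QQ_p}$ is bounded and carries locally analytic vectors to locally analytic vectors: it is induced from the map of integral rings $\widetilde{\mathbf{A}}^{(0,r],\circ}_{\QQ_p} \to \widetilde{\mathbf{A}}^{[s,r]}_{\QQ_p}$, which sends $[\varpi]$ to $[\varpi]$ and is therefore bounded for the $[\varpi]$-adic topologies, and on $\widetilde{\mathbf{B}}^{[s,r]}_{\QQ_p}$ the $[\varpi]$-adic and $p$-adic topologies coincide (both $p/[\varpi]^{1/r}$ and $[\varpi]^{1/s}/p$ being power-bounded there), so the two notions of local analyticity agree on the target; a bounded equivariant linear map then sends analytic orbit maps to analytic orbit maps, since postcomposing a Mahler expansion with such a map only rescales the valuations of the coefficients by a bounded factor and shifts the analyticity parameter $\lambda$. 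Consequently $\iota(x) \in \widetilde{\mathbf{B}}^{[s,r],\la}_{\QQ_p}$, so by Theorem \ref{thm:bergers_char_0_theorem} there is an $n$ with $\iota(x) \in \varphi^{-n}(\mathbf{B}^{[p^{-n}s,p^{-n}r]}_{\QQ_p})$. Since $\iota(x)$ also lies in $\widetilde{\mathbf{A}}^{(0,r]}_{\QQ_p}$, Lemma \ref{lem:intersected_period_rings} gives $\iota(x) \in \varphi^{-n}(\mathbf{A}^{(0,p^{-n}r]}_{\QQ_p})$, and injectivity of $\iota$ then yields $x \in \varphi^{-n}(\mathbf{A}^{(0,p^{-n}r]}_{\QQ_p})$, completing the argument.

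The step I expect to be the main obstacle is the claim that $\iota$ is bounded and preserves local analyticity: this is the one place where the passage from the $[\varpi]$-adic topology of $\widetilde{\mathbf{A}}^{(0,r]}_{\QQ_p}$ to the $p$-adic topology of $\widetilde{\mathbf{B}}^{[s,r]}_{\QQ_p}$ has to be controlled, and it is exactly what legitimizes feeding a ``characteristic $p$-flavoured'' locally analytic vector into Berger's characteristic $0$ theorem. Everything else is a formal assembly of Tate--Sen descent, Theorem \ref{thm:bergers_char_0_theorem}, and Lemma \ref{lem:intersected_period_rings}.
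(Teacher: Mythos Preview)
Your proof is correct and follows essentially the same route as the paper: the inclusion $\supseteq$ via Corollary~\ref{cor:descent_loc_analytic}, and the inclusion $\subseteq$ by passing to $\widetilde{\mathbf{B}}^{[s,r]}_{\QQ_p}$, invoking Theorem~\ref{thm:bergers_char_0_theorem}, and intersecting back via Lemma~\ref{lem:intersected_period_rings}. The only difference is that you spell out why the map $\iota$ preserves local analyticity, a point the paper takes for granted in asserting $\widetilde{\mathbf{A}}^{(0,r],\la}_{\QQ_p} \subset \widetilde{\mathbf{B}}^{[s,r],\la}_{\QQ_p}$.
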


\begin{proof}
Indeed, choosing some arbitrary $s$, we have
$$ \widetilde{\mathbf{A}}^{(0,r],\la}_{\QQ_p} \subset \widetilde{\mathbf{A}}^{(0,r]}_{\QQ_p} \cap \widetilde{\mathbf{B}}^{[s,r],\la}_{\QQ_p} = \widetilde{\mathbf{A}}^{(0,r]}_{\QQ_p} \cap (\cup_n \varphi^{-n}(\mathbf{B}^{[p^{-n}s,p^{-n}r]}_{\QQ_p})).$$ So by Lemma \ref{lem:intersected_period_rings}, the ring $ \widetilde{\mathbf{A}}^{(0,r],\la}_{\QQ_p}$ is contained in $\cup_n \varphi^{-n}(\mathbf{A}^{(0,p^{-n}r]}_{\QQ_p})$. Conversely, any element in $\cup_n \varphi^{-n}(\mathbf{A}^{(0,p^{-n}r]}_{\QQ_p})$ is locally analytic by Corollary \ref{cor:descent_loc_analytic}.
\end{proof}

From this we can deduce a new proof of Theorem \ref{thm:E_decompletion}, which after this set up becomes a one liner. Take some $0 < r < 1$ with $1/r \in \ZZ[1/p]$ and consider the short exact sequence
$$0 \rightarrow \widetilde{\mathbf{A}}^{(0,r]}_{\QQ_p}\rightarrow \widetilde{\mathbf{A}}^{(0,r]}_{\QQ_p} \rightarrow \widetilde{\mathbf{E}}_{\QQ_p} \rightarrow 0.$$

 By Theorem \ref{main_thm} and Proposition \ref{prop:tate_sen_axioms_verification} we have $\R^1_{\la}(\widetilde{\mathbf{A}}^{(0,r],\la}) = 0,$ so $\widetilde{\mathbf{E}}_{\QQ_p}^\la = \widetilde{\mathbf{A}}^{(0,r],\la}/p.$ Passing to locally analytic vectors, we get $$\widetilde{\mathbf{E}}_{\QQ_p}^\la = \cup_n \varphi^{-n}(\mathbf{A}^{(0,p^{-n}r]}_{\QQ_p})/p = \cup_n \FF_p((X^{1/p^n})).$$

\begin{remark}
\label{rem:mixed_characteristic_decompletion_from_mod_p_decompletion}
One can also argue in reverse and deduce Corollary \ref{cor:decompletion_integral_period_ring} from Theorem \ref{thm:E_decompletion}. It is not clear to us if one can push this further to deduce Theorem \ref{thm:bergers_char_0_theorem}.
\end{remark}

\subsection{Descent of Lubin-Tate $(\varphi,\Gamma)$-modules to locally analytic vectors}

Recall that according to Lubin-Tate theory the Lubin-Tate character gives an isomorphism $\Gamma_{\LT} \xrightarrow{\cong} \mathcal{O}_{K}^\times.$ The norm of the Lubin-Tate character is an unramified twist of the cyclotomic character, which shows that $K_{\LT}$ contains a twist $K^{\cyc,\eta}$ of the cyclotomic extension $K^{\cyc}$ by an unramified character $\eta$.

In the theory $p$-adic Galois representations and the $p$-adic Langlands program $(\varphi,\Gamma)$-modules have played a central role. In particular, the overconvergence theorem of Cherbonnier-Colmez (\cite{CC98}) has been a crucial component in providing a link between the Banach and the analytic sides of the Galois side for $\mathrm{GL}_2(\QQ_p).$ When $\QQ_p$ is replaced with a finite extension $K$, the situation with overconvergence is more complicated, as we shall now explain.

In this context one considers a big Robba ring $\widetilde{\mathbf{B}}_{\mathrm{rig},\LT}^\dagger$ and a "deperfected" Robba ring $\mathbf{B}_{\mathrm{rig},\LT}^\dagger$ which is a ring of power series in one variable converging in an annulus (see \cite{Be16}, where these rings are denoted $\widetilde{\mathbf{B}}_{\mathrm{rig},K}^\dagger$ and ${\mathbf{B}}_{\mathrm{rig},K}^\dagger$ respectively). Both of these rings are endowed with a Frobenius operator $\varphi$ and an action of $\Gamma_{\LT}.$ By the main result of Fourquaux-Xie (\cite{FX14}) it is known that when $K \neq \QQ_p$ there exist $(\varphi,\Gamma_{\LT})$-modules over $\widetilde{\mathbf{B}}_{\mathrm{rig},\LT}^\dagger$ which are not overconvergent, i.e. do not descend to $\mathbf{B}_{\mathrm{rig},\LT}^\dagger$. The main theorem of \cite{Be16} gives a sufficient condition for this to happen, namely, $K$-analyticity. However, in $\mathsection{8}$ of \cite{Be16} it is also shown that an arbitrary 
$(\varphi,\Gamma_{\LT})$-module over $\widetilde{\mathbf{B}}_{\mathrm{rig},\LT}^\dagger$ descends to the multivariable ring $\widetilde{\mathbf{B}}_{\mathrm{rig},\LT}^{\dagger,\mathrm{pa}}$ of pro-analytic vectors.  

The ring $\widetilde{\mathbf{B}}_{\mathrm{rig},K}^{\dagger,\mathrm{pa}}$ is a $\QQ_p$-algebra and as such it is a characteristic 0 object. On the other hand, when $K= \QQ_p$, Cherbonnier-Colmez show there is a descent to an integral ring $\mathbf{A}_{\QQ_p}^\dagger$ where $p$ is not invertible. We will now give an integral version of this descent when $K \neq \QQ_p.$ 

Let $\widetilde{\mathbf{A}}_{\LT} = \widetilde{\mathbf{A}}^{H_{\LT}}$ and $\widetilde{\mathbf{A}}^{\dagger}_{\LT} = \varinjlim_{r} \mathbf{A}^{(0,r]}_{\LT}$ where $\mathbf{A}^{(0,r]}_{\LT} := \mathbf{A}^{(0,r],H_{\LT}}.$ The idea is that the ring $\widetilde{\mathbf{A}}^{\dagger,\la}_{\LT}$ is an appropriate integral analogue of $\widetilde{\mathbf{B}}_{\mathrm{rig},K}^{\dagger,\mathrm{pa}}$.

\begin{theorem}
\label{thm:overconvergence_in_lubin_tate_case}

Every $(\varphi,\Gamma)$-module over $\widetilde{\mathbf{A}}^{\dagger}_{\LT}$ descends uniquely to a $(\varphi,\Gamma)$-module over $\widetilde{\mathbf{A}}^{\dagger,\la}_{\LT}.$
\end{theorem}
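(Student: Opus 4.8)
The plan is to deduce the theorem from Theorem~\ref{main_thm} one radius at a time, then to carry the Frobenius along and pass to the limit. Given a $(\varphi,\Gamma_{\LT})$-module $M$ over $\widetilde{\mathbf{A}}^{\dagger}_{\LT} = \varinjlim_r \widetilde{\mathbf{A}}^{(0,r],H_{\LT}}$, the usual spreading-out argument (a finite free module over a filtered colimit of rings, equipped with a continuous group action and a semilinear $\varphi$, descends to a finite level) produces, for all sufficiently small $r$, a finite free $\widetilde{\mathbf{A}}^{(0,r],H_{\LT}}$-module $M^{(0,r]}$ with semilinear $\Gamma_{\LT}$-action, compatibly in $r$, such that $\widetilde{\mathbf{A}}^{\dagger}_{\LT} \otimes_{\widetilde{\mathbf{A}}^{(0,r],H_{\LT}}} M^{(0,r]} = M$. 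These modules fall under the setup of $\S3$: $\Gamma_{\LT}$ acts by isometries for the $[\varpi]$-adic valuation (since $\gamma([\varpi])$ and $[\varpi]$ have the same valuation), and one fixes a $\Gamma_{\LT}$-stable lattice using compactness of $\Gamma_{\LT}$. Now apply Theorem~\ref{main_thm} with $\WLambda = \widetilde{\mathbf{A}}^{(0,r],H_{\LT}}$ and $G = \Gamma_{\LT}$: hypothesis (i) is Proposition~\ref{prop:tate_sen_axioms_verification}.2, and hypothesis (ii) holds because $\Gamma_{\LT}$ is an open subgroup of $\mathcal{O}_K^\times$, whose Lie algebra is the abelian Lie algebra $K$. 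This yields that $(M^{(0,r]})^{\la}$ is finite free of the same rank over $(\widetilde{\mathbf{A}}^{(0,r],H_{\LT}})^{\la}$ (visible in the proof of part~1, which builds the locally analytic vectors out of the finite free Tate--Sen descents $D_{H,n}$) and that the natural map $\widetilde{\mathbf{A}}^{(0,r],H_{\LT}} \otimes_{(\widetilde{\mathbf{A}}^{(0,r],H_{\LT}})^{\la}} (M^{(0,r]})^{\la} \to M^{(0,r]}$ is an isomorphism.

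Next I would carry along the Frobenius. Since $\varphi$ is continuous, hence bounded, and commutes with the $\Gamma_{\LT}$-action, and since a bounded $\Gamma$-equivariant semilinear map between modules with isometric $\Gamma$-actions sends locally analytic vectors to locally analytic vectors — apply the map termwise to the Mahler expansion of an orbit map, which only shifts the constant $\mu$ of Proposition~\ref{prop:equiv_conds_analytic_func} — the operator $\varphi$ carries $(\widetilde{\mathbf{A}}^{(0,r],H_{\LT}})^{\la}$ into $(\widetilde{\mathbf{A}}^{(0,r/p],H_{\LT}})^{\la}$ and $(M^{(0,r]})^{\la}$ into $(M^{(0,r/p]})^{\la}$, compatibly with the transition maps. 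Passing to the colimit over $r$ recovers $\widetilde{\mathbf{A}}^{\dagger,\la}_{\LT} = \varinjlim_r (\widetilde{\mathbf{A}}^{(0,r],H_{\LT}})^{\la}$ as a ring with commuting $\varphi$ and $\Gamma_{\LT}$, and sets $M^{\dagger,\la} = \varinjlim_r (M^{(0,r]})^{\la}$, a finite free $\widetilde{\mathbf{A}}^{\dagger,\la}_{\LT}$-module with commuting semilinear $\varphi$ and $\Gamma_{\LT}$; taking the colimit of the isomorphisms above gives $\widetilde{\mathbf{A}}^{\dagger}_{\LT} \otimes_{\widetilde{\mathbf{A}}^{\dagger,\la}_{\LT}} M^{\dagger,\la} = M$. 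If the chosen definition of $(\varphi,\Gamma)$-module includes an \'etaleness (or non-degeneracy) condition, one verifies it for $M^{\dagger,\la}$ by comparing the rank-$d$ free $\widetilde{\mathbf{A}}^{\dagger,\la}_{\LT}$-modules $\widetilde{\mathbf{A}}^{\dagger,\la}_{\LT} \otimes_{\varphi} M^{\dagger,\la}$ and $M^{\dagger,\la}$, whose base changes to $\widetilde{\mathbf{A}}^{\dagger}_{\LT}$ are identified by \'etaleness of $M$. This settles existence.

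For uniqueness I would prove that the base-change functor $N \mapsto \widetilde{\mathbf{A}}^{\dagger}_{\LT} \otimes_{\widetilde{\mathbf{A}}^{\dagger,\la}_{\LT}} N$ on $(\varphi,\Gamma_{\LT})$-modules is fully faithful; any two descents of $M$ then correspond to the identity of $M$, hence are canonically isomorphic. Via internal Homs this reduces to the statement that, for a $(\varphi,\Gamma_{\LT})$-module $P$ over $\widetilde{\mathbf{A}}^{\dagger,\la}_{\LT}$ and $Q = \widetilde{\mathbf{A}}^{\dagger}_{\LT} \otimes_{\widetilde{\mathbf{A}}^{\dagger,\la}_{\LT}} P$, one has $Q^{\la} = P$: granting this, $Q^{\Gamma_{\LT}} = P^{\Gamma_{\LT}}$ because a $\Gamma_{\LT}$-invariant element of $Q$ is in particular locally analytic, and intersecting with $\varphi$-invariants gives the comparison of Hom-spaces. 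The inclusion $P \subseteq Q^{\la}$ holds since the $\Gamma_{\LT}$-action on $P$ is locally analytic, and part~1 of Theorem~\ref{main_thm} applied to $Q$ (again radius by radius) gives $\widetilde{\mathbf{A}}^{\dagger}_{\LT} \otimes_{\widetilde{\mathbf{A}}^{\dagger,\la}_{\LT}} Q^{\la} = Q$; thus $P \subseteq Q^{\la}$ is an inclusion of finite free $\widetilde{\mathbf{A}}^{\dagger,\la}_{\LT}$-modules of equal rank which becomes an isomorphism after $\otimes_{\widetilde{\mathbf{A}}^{\dagger,\la}_{\LT}} \widetilde{\mathbf{A}}^{\dagger}_{\LT}$.

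The step I expect to be the real obstacle is exactly this last upgrade $Q^{\la} = P$: passing from ``isomorphism after base change to $\widetilde{\mathbf{A}}^{\dagger}_{\LT}$'' to ``isomorphism over $\widetilde{\mathbf{A}}^{\dagger,\la}_{\LT}$'' requires a faithful flatness property of $\widetilde{\mathbf{A}}^{\dagger,\la}_{\LT} \hookrightarrow \widetilde{\mathbf{A}}^{\dagger}_{\LT}$ — equivalently, that an element of $\widetilde{\mathbf{A}}^{\dagger,\la}_{\LT}$ which is a unit in $\widetilde{\mathbf{A}}^{\dagger}_{\LT}$ is already a unit there, applied to the determinant of $P \hookrightarrow Q^{\la}$. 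This should follow at each finite radius from the Tate--Sen picture, where $(\widetilde{\mathbf{A}}^{(0,r],H_{\LT}})^{\la}$ contains the rings $\varphi^{-n}(\mathbf{A}^{(0,p^{-n}r]}_{\LT})$ over which $\widetilde{\mathbf{A}}^{(0,r],H_{\LT}}$ is well understood, but pinning down the exact statement and checking its compatibility with the colimits over $n$ and $r$ is the one genuinely non-formal ingredient; the existence half, by contrast, is an essentially formal consequence of Theorem~\ref{main_thm} together with the fact that $\varphi$ preserves locally analytic vectors.
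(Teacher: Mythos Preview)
Your existence argument is essentially the paper's: apply part~1 of Theorem~\ref{main_thm} radius by radius, with the Tate--Sen axioms supplied by Proposition~\ref{prop:tate_sen_axioms_verification}.2. The paper's proof is two sentences and does not spell out spreading-out, Frobenius-compatibility, or uniqueness; your write-up of those steps is reasonable and the Frobenius argument (a bounded $\Gamma$-equivariant map preserves locally analytic vectors) is correct.

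There is, however, one genuine gap. Proposition~\ref{prop:tate_sen_axioms_verification}.2 is not available unconditionally: the Tate--Sen formalism of \S3.2 needs a character $\chi:G_0\to\ZZ_p^\times$ with open image, and the choice $\Lambda_{H_L,n}=\varphi^{-n}(\mathbf{A}_L^{(0,p^{-n}r]})$ forces $\chi$ to be the cyclotomic character restricted to $\Gamma_{\LT}$, which only makes sense when $K^{\cyc}\subset K^{\LT}$. This inclusion can fail; only an unramified twist $K^{\cyc,\eta}$ is guaranteed to lie in $K^{\LT}$ (see the paragraph preceding the theorem). The paper therefore splits into two cases: first assume $K^{\cyc}\subset K^{\LT}$ and argue exactly as you do, then reduce the general case to this one by descending along the unramified twist, citing \S8 of \cite{Be16} for the analogous manoeuvre. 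You should insert this reduction step.

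On uniqueness: the paper says nothing beyond the word ``uniquely'', and your analysis is more careful than the paper's. Your identification of the obstacle (that $\widetilde{\mathbf{A}}^{\dagger,\la}_{\LT}\hookrightarrow\widetilde{\mathbf{A}}^{\dagger}_{\LT}$ must reflect units, so that the determinant of $P\hookrightarrow Q^{\la}$ is invertible) is correct, and your suggested route through the Tate--Sen descents $\varphi^{-n}(\mathbf{A}_L^{(0,p^{-n}r]})$ is the right one: at each finite radius the proof of part~1 of Theorem~\ref{main_thm} shows $M^{\la}=\WLambda^{\la}\otimes_{\Lambda_{H,n}}D_{H,n}(M)$, so uniqueness reduces to the well-known uniqueness of the Tate--Sen descent $D_{H,n}$.
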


\begin{proof}
When $K^{\cyc} \subset K^{\LT}$ this follows from part 2 of Proposition \ref{prop:tate_sen_axioms_verification} and part 1 of Theorem \ref{main_thm}. In general, one can descend along an unramified twist (see $\mathsection{8}$ \cite{Be16} for a similar argument).
\end{proof}

\begin{remark}
Corollary \ref{cor:decompletion_integral_period_ring} shows that when $K = \QQ_p$ we recover the usual integral descent of Cherbonnier-Colmez.
\end{remark}

\subsection{Pro-analytic and locally analytic vectors in $\wt{\bf{A}}_{\QQ_p}$}
In this subsection we work out an analogy between $\mathrm{B}_{\mathrm{dR},\QQ_p}^{+}$ of $H_{\QQ_p}$ fixed-points of $\mathrm{B}_{\mathrm{dR}}^+$ and the ring $\wt{\bf{A}}_{\QQ_p}.$ 

Let $\{M_n\}_n$ be an inverse system of finite free modules over corresponding $\ZZ_p$-Tate algebras $\{R_n\}_n$ endowed with the action of a compact $p$-adic Lie group $G$. For the inverse limit $M = \varprojlim_n M_n$ we define
the pro-analytic vectors
$$M^{\pa} = \varprojlim_n M_n^{\la} = \varprojlim_n \varinjlim_{G_0,\lambda} M_n^{G_0,\lambda\han}$$
and the locally analytic vectors
$$M^{\la} = \varinjlim_{G_0,\lambda} \varprojlim_n M_n^{G_0,\lambda\han}.$$
Clearly there is a natural map $M^{\la} \rightarrow M^{\pa}$. Furthermore, it can be shown that when $R = \varprojlim_n R_n$ itself is a $\ZZ_p$-Tate algebra then $M^{\pa} = M^{\la}$ and that the new definition of locally analytic vectors agrees with the old one of $\mathsection{2}.$ For example, when the $M_n$ are $G$-Banach spaces, the module $M$ is a $G$-Frechet space and we recover the definitions appearing in \cite{BC16} and \cite{Be16}.

The ring $\mathrm{B}_{\mathrm{dR},{\QQ_p}}^{+}$ has the structure of a $\Gamma_{\QQ_p}$-Fréchet space. Indeed, letting $t$ denote Fontaine's element, one has
$$\mathrm{B}_{\mathrm{dR},\QQ_p}^{+} = \varprojlim_{n} \mathrm{B}_{\mathrm{dR},\QQ_p}^{+}/t^n.$$
It turns out there is a very nice description of the pro-analytic and locally-analytic elements in $\mathrm{B}_{\mathrm{dR},\QQ_p}^{+}$ (Proposition 2.6 of \cite{Po22a}).
\begin{theorem}
\label{thm:analytic_vecs_bdr}  
We have $\mathrm{B}_{\mathrm{dR},\QQ_p}^{+,\la} = \cup_n\QQ_p(\zeta_{p^n})[[t]]$ and $\mathrm{B}_{\mathrm{dR},\QQ_p}^{+,\pa} = \QQ_p^{\cyc}[[t]]$.
\end{theorem}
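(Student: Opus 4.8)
The plan is to compute $\mathrm{B}_{\mathrm{dR},\QQ_p}^{+,\la}$ and $\mathrm{B}_{\mathrm{dR},\QQ_p}^{+,\pa}$ by combining the Fréchet presentation $\mathrm{B}_{\mathrm{dR},\QQ_p}^{+} = \varprojlim_n \mathrm{B}_{\mathrm{dR},\QQ_p}^{+}/t^n$ with the graded pieces $t^n\mathrm{B}_{\mathrm{dR},\QQ_p}^{+}/t^{n+1}\mathrm{B}_{\mathrm{dR},\QQ_p}^{+} \cong \CC_p(n)$. First I would recall (from Sen theory, e.g. \cite{BC16}) that $\CC_p^{\la} = \CC_p^{H_{\QQ_p},\la}$ (here $H_{\QQ_p}$ is already trivial so $\CC_p$ itself) has locally analytic vectors $\cup_n \QQ_p(\zeta_{p^n})$ and pro-analytic (which here coincides with locally analytic, since $\CC_p$ is already Banach) equal to $\cup_n\QQ_p(\zeta_{p^n})$ as well; more precisely the relevant input is that $\CC_p(j)^{\la} = \cup_n \QQ_p(\zeta_{p^n})$ for every Tate twist $j$, since the twist only changes the $\Gamma$-action by a locally analytic character. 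This pins down the associated graded of the $t$-adic filtration on the analytic vectors.

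Next I would run the dévissage. For each $n$, the short exact sequence $0 \to t^n\mathrm{B}_{\mathrm{dR}}^+/t^{n+1} \to \mathrm{B}_{\mathrm{dR}}^+/t^{n+1} \to \mathrm{B}_{\mathrm{dR}}^+/t^n \to 0$ is a sequence of $\Gamma_{\QQ_p}$-Banach spaces; taking locally analytic vectors and using that $\R^1_{\la}$ of a finite-dimensional-over-$\CC_p$-type module vanishes in the relevant range (or directly the long exact sequence together with the explicit surjectivity of $\gamma - 1$ as in the proof of Corollary~\ref{cor:coh_comparison_dim_1}, or Theorem~\ref{main_thm} applied to each $\mathrm{B}_{\mathrm{dR}}^+/t^n$), I would get $(\mathrm{B}_{\mathrm{dR}}^+/t^{n})^{\la}$ is an extension of $(\mathrm{B}_{\mathrm{dR}}^+/t^{n-1})^{\la}$ by $\cup_m \QQ_p(\zeta_{p^m})\cdot t^{n-1}$. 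By induction this shows $(\mathrm{B}_{\mathrm{dR}}^+/t^n)^{\la} = (\cup_m \QQ_p(\zeta_{p^m}))[[t]]/t^n$, and the same for $\pa$ with $\QQ_p^\cyc$ in place of $\cup_m\QQ_p(\zeta_{p^m})$ once one notes the union is the whole field after completion is irrelevant at finite level. Then I would pass to the limit: $\mathrm{B}_{\mathrm{dR},\QQ_p}^{+,\pa} = \varprojlim_n (\mathrm{B}_{\mathrm{dR}}^+/t^n)^{\la} = \varprojlim_n (\cup_m\QQ_p(\zeta_{p^m}))[[t]]/t^n = \QQ_p^{\cyc}[[t]]$, where the completion of the coefficient field appears precisely because of the inverse limit (a compatible system of truncated power series can have coefficients whose "denominators in $m$" grow with the $t$-degree). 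For the locally analytic vectors $\mathrm{B}_{\mathrm{dR},\QQ_p}^{+,\la} = \varinjlim_{G_0,\lambda}\varprojlim_n (\mathrm{B}_{\mathrm{dR}}^+/t^n)^{G_0,\lambda\han}$, the colimit is taken before the limit, so a fixed analyticity radius forces all coefficients to lie in a single $\QQ_p(\zeta_{p^m})$, giving $\cup_n\QQ_p(\zeta_{p^n})[[t]]$.

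The main obstacle I anticipate is making the inductive step uniform in $n$ in a way that survives the inverse limit: I need not just that each $(\mathrm{B}_{\mathrm{dR}}^+/t^n)^{\la}$ has the expected shape, but that the transition maps $(\mathrm{B}_{\mathrm{dR}}^+/t^{n+1})^{\la} \to (\mathrm{B}_{\mathrm{dR}}^+/t^n)^{\la}$ are the obvious truncations and, crucially for the $\pa$ versus $\la$ distinction, that the analyticity radius needed to see a given coefficient degrades (only) in a controlled way as the $t$-degree grows — this is exactly the phenomenon that distinguishes $\QQ_p^\cyc[[t]]$ from $\cup_n\QQ_p(\zeta_{p^n})[[t]]$. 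This requires keeping track of the $G_0$-action on $t$ itself (on which $\gamma$ acts by the cyclotomic character $\chi(\gamma)$, a locally analytic but not locally constant unit) and estimating how the analytic norm of $t^k$ behaves. I would handle this by working with the rings $R_n = \mathrm{B}_{\mathrm{dR}}^+/t^n$ as finite free $\QQ_p$-... actually they are not finite free over a $\ZZ_p$-Tate algebra directly, so the cleanest route is to first reduce to the already-cited Proposition 2.6 of \cite{Po22b} — which is precisely this statement — and simply note that our framework reproves it, or alternatively cite it directly and use the dévissage above only to re-derive it as a sanity check; since the statement is attributed to \cite{Po22b} in the excerpt, the honest proof is: invoke that reference, observing that its hypotheses are met and that our Theorem~\ref{main_thm} gives the vanishing of $\R^1_{\la}$ needed to propagate through the $t$-adic filtration.
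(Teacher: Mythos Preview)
The paper does not prove this theorem at all: it is stated with the attribution ``Proposition 2.6 of \cite{Po22b}'' and is used only as a motivating analogy for Theorem~\ref{thm:analytic_elems_a_tilde}. Your final sentence arrives at exactly this, so in the end your proposal matches the paper's treatment.

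Your d\'evissage sketch is a reasonable outline of how such a result is actually proved (and is close in spirit to what the paper does for $\wt{\bf{A}}_{\QQ_p}$ in \S4.3, where the analogue of your ``uniformity in $n$'' obstacle is handled via Proposition~\ref{prop:control_of_analyticity_a_tilde_mod_p_powers} and Remark~\ref{rem:strong_local_analyticity}). Two small points: first, your phrase ``the completion of the coefficient field appears'' is misleading, since $\QQ_p^{\cyc}=\cup_m\QQ_p(\zeta_{p^m})$ is \emph{not} completed; the distinction between $\QQ_p^{\cyc}[[t]]$ and $\cup_n\QQ_p(\zeta_{p^n})[[t]]$ is only that in the former the level $m$ may vary with the $t$-degree, not that any $p$-adic completion occurs. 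Second, $\mathrm{B}_{\mathrm{dR},\QQ_p}^{+}/t^n$ \emph{is} finite free over the $\QQ_p$-Banach algebra $\widehat{\QQ_p^{\cyc}}\cong \mathrm{B}_{\mathrm{dR},\QQ_p}^{+}/t$, which satisfies the Tate--Sen axioms by classical Sen theory, so your hesitation about applying Theorem~\ref{main_thm} there is unwarranted.
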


In a direct analogy with this we can consider the ring $\wt{\bf{A}}_{\QQ_p}$ as the inverse limit $$\wt{\bf{A}}_{\QQ_p} = \varprojlim_n \wt{\bf{A}}_{\QQ_p}/p^n,$$ with each $\wt{\bf{A}}_{\QQ_p}/p^n$ being a $\ZZ_p$-Tate algebra (with topologically nilpotent unit $T$). For example, we have $\wt{\bf{A}}_{\QQ_p}/p^n = \wt{\bf{E}}_{\QQ_p}.$ 

The third application of our main theorem is then the following result.

\begin{theorem}
\label{thm:analytic_elems_a_tilde}
We have $\wt{\bf{A}}_{\QQ_p}^{\la} = \varphi^{-\infty}({\bf{A}}_{\QQ_p})$ and $\wt{\bf{A}}_{\QQ_p}^{\pa} = \varprojlim_{n} \varphi^{-\infty}({\bf{A}_{{\QQ_p}}})/p^n.$ 
\end{theorem}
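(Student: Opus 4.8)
The plan is to deduce Theorem \ref{thm:analytic_elems_a_tilde} from the main decompletion result, Theorem \ref{main_thm}, applied at finite level together with a passage to the limit, in complete parallel with the proof of Theorem \ref{thm:E_decompletion} in $\mathsection{4.1}$. First I would fix $0 < r < 1$ with $1/r \in \ZZ[1/p]$ and work with $\WLambda = \widetilde{\mathbf{A}}^{(0,r]}_{\QQ_p}$, which satisfies (TS1)--(TS4) by part 1 of Proposition \ref{prop:tate_sen_axioms_verification}; since $\Gamma_{\QQ_p}$ is a one-dimensional $p$-adic Lie group its Lie algebra is abelian, so Theorem \ref{main_thm} applies to any finite free $\WLambda$-module with semilinear $\Gamma_{\QQ_p}$-action. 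The key input from $\mathsection{4.1}$ is Corollary \ref{cor:decompletion_integral_period_ring}, which already gives $\widetilde{\mathbf{A}}^{(0,r],\la}_{\QQ_p} = \cup_n \varphi^{-n}(\mathbf{A}^{(0,p^{-n}r]}_{\QQ_p})$; I want to upgrade this from the overconvergent ring to the full ring $\wt{\bf{A}}_{\QQ_p}$ and its reductions mod $p^k$.

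The main step is to compute $(\wt{\bf{A}}_{\QQ_p}/p^k)^{\la}$ for each $k$. I would use the short exact sequence of finite free $\widetilde{\mathbf{A}}^{(0,r]}_{\QQ_p}/p^k$-modules
$$0 \rightarrow \wt{\bf{A}}_{\QQ_p}^{(0,r]}/p^{k-1} \xrightarrow{\ p\ } \wt{\bf{A}}_{\QQ_p}^{(0,r]}/p^{k} \rightarrow \wt{\bf{E}}_{\QQ_p} \rightarrow 0$$
(note $\wt{\bf{A}}^{(0,r]}_{\QQ_p}/p = \wt{\bf{E}}_{\QQ_p}$ independently of $r$), take locally analytic vectors, and induct on $k$. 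The vanishing $\R^1_{\la}(\wt{\bf{A}}^{(0,r]}_{\QQ_p}/p^{k-1}) = 0$ from part 2 of Theorem \ref{main_thm} makes the sequence of locally analytic vectors exact, so by induction $(\wt{\bf{A}}^{(0,r]}_{\QQ_p}/p^k)^{\la}$ is an extension of $\wt{\bf{E}}^{\la}_{\QQ_p} = \cup_n \FF_p((X^{1/p^n}))$ by $(\wt{\bf{A}}^{(0,r]}_{\QQ_p}/p^{k-1})^{\la}$; identifying these extensions with $\varphi^{-\infty}(\mathbf{A}_{\QQ_p})/p^k$ should follow by comparing with the known structure of $\cup_n \varphi^{-n}(\mathbf{A}^{(0,p^{-n}r]}_{\QQ_p})/p^k$ from Corollary \ref{cor:decompletion_integral_period_ring}, or more directly by noting that $\wt{\bf{A}}^{(0,r]}_{\QQ_p}/p^k$ and $\wt{\bf{A}}_{\QQ_p}/p^k$ have the same locally analytic vectors because the embedding is an isometry onto a $\Gamma$-stable subring that already contains $\varphi^{-\infty}(\mathbf{A}_{\QQ_p})/p^k$ (cf. Corollary \ref{cor:descent_loc_analytic} for the inclusion $\supseteq$). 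Either way one gets $(\wt{\bf{A}}_{\QQ_p}/p^k)^{\la} = \varphi^{-\infty}(\mathbf{A}_{\QQ_p})/p^k$.

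Finally I would assemble the Fréchet statement. By the definitions recalled just before Theorem \ref{thm:analytic_vecs_bdr}, $\wt{\bf{A}}_{\QQ_p}^{\pa} = \varprojlim_k (\wt{\bf{A}}_{\QQ_p}/p^k)^{\la} = \varprojlim_k \varphi^{-\infty}(\mathbf{A}_{\QQ_p})/p^k$, which is the second claimed identity. For the locally analytic vectors $\wt{\bf{A}}_{\QQ_p}^{\la} = \varinjlim_{G_0,\lambda}\varprojlim_k (\wt{\bf{A}}_{\QQ_p}/p^k)^{G_0,\lambda\han}$, I would invoke the strong form of the vanishing of $\R^1_{\la}$ recorded in Remark \ref{rem:strong_local_analyticity}: for each $(G_0,\lambda)$ the transition map in the $k$-tower is eventually zero on a cofinal subsystem after shrinking $(G_0,\lambda)$, which lets the colimit and limit be exchanged (a Mittag-Leffler/cofinality argument exactly as for $\mathrm{B}_{\mathrm{dR},\QQ_p}^+$ in \cite{Po22b}), giving $\wt{\bf{A}}_{\QQ_p}^{\la} = \varinjlim_k \varphi^{-\infty}(\mathbf{A}_{\QQ_p})/p^k$-compatible elements, i.e. $\varphi^{-\infty}(\mathbf{A}_{\QQ_p})$ itself since $\varphi^{-\infty}(\mathbf{A}_{\QQ_p})$ is $p$-adically separated and complete (it is the $p$-adic completion of $\cup_n \varphi^{-n}(\mathbf{A}_{\QQ_p})$, and the $\cup_n \FF_p((X^{1/p^n}))$-level description shows nothing new appears in the limit). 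The main obstacle I anticipate is precisely this last exchange of $\varprojlim_k$ with $\varinjlim_{G_0,\lambda}$: one must check that the locally analytic vectors at level $k$ are computed uniformly enough in $k$ — concretely, that the $D^{\lambda}_{n}$-descent of $\wt{\bf{A}}^{(0,r]}_{\QQ_p}/p^k$ is compatible with reduction mod $p$ so that a single pair $(G_0,\lambda)$ (equivalently a single $n$ and set of coordinates) works for all $k$ — and this is where one needs to be careful that the $p$-adic and $[\varpi]$-adic (i.e. $T$-adic) topologies interact well; I would handle it by working with the integral descent $D^{\lambda,+}_{H_0,n}(M^+)$ of $\mathsection{3.3}$, which is $\varpi$-adically (hence $p$-adically) complete by construction, so that its reductions mod $p^k$ compute the $D^{\lambda}_n$ of the reductions.
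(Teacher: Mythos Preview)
Your argument for the pro-analytic identity is essentially the paper's: compute $(\wt{\bf{A}}_{\QQ_p}/p^k)^{\la}$ by d\'evissage on $k$ using $\R^1_{\la}=0$ from Theorem~\ref{main_thm}, identify it with $\varphi^{-\infty}({\bf{A}}_{\QQ_p})/p^k$, and take the inverse limit. This is exactly Lemma~\ref{lem:analytic_elements_tilde_A_mod_pn}.

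For the locally analytic identity you have the right tool (the strong vanishing of Remark~\ref{rem:strong_local_analyticity}) but the execution contains a genuine error. Your assertion that $\varphi^{-\infty}({\bf{A}}_{\QQ_p})$ is ``$p$-adically separated and complete'' and ``is the $p$-adic completion of $\cup_n \varphi^{-n}({\bf{A}}_{\QQ_p})$'' is false: the ring $\varphi^{-\infty}({\bf{A}}_{\QQ_p})$ \emph{is} the union $\cup_n \varphi^{-n}({\bf{A}}_{\QQ_p})$, and it is \emph{not} $p$-adically complete --- Remark~\ref{rem:analytic_elements_a_tilde} gives the explicit element $\sum_{n\geq 0} p^n\varphi^{-n}(1+T)$ lying in $\wt{\bf{A}}_{\QQ_p}^{\pa}$ but not in $\varphi^{-\infty}({\bf{A}}_{\QQ_p})$. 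Relatedly, a literal ``exchange of $\varprojlim_k$ and $\varinjlim_{G_0,\lambda}$'' would yield $\wt{\bf{A}}_{\QQ_p}^{\pa}$, not $\wt{\bf{A}}_{\QQ_p}^{\la}$, and these are different. So that step as written fails.

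What the paper actually does (and what your last parenthetical begins to suggest but does not carry out) is a \emph{cofinality} argument rather than a limit exchange. The strong vanishing is used in Proposition~\ref{prop:control_of_analyticity_a_tilde_mod_p_powers} to lift any element of $(\wt{\bf{A}}^{(0,r]}_{\QQ_p}/p^k)^{\Gamma,\lambda\han}$ all the way to $(\wt{\bf{A}}^{(0,r]}_{\QQ_p})^{\Gamma',\lambda'\han}$ in characteristic $0$ --- with $(\Gamma',\lambda')$ depending only on $(\Gamma,\lambda)$, \emph{not} on $k$. Then Lemma~\ref{lem:control_of_analyticity_a_tilde} (which rests on Berger's Theorem~\ref{thm:bergers_char_0_theorem}) bounds this by a single $\varphi^{-m}({\bf{A}}^{(0,rp^{-m}]}_{\QQ_p})$. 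Thus the inverse systems $\{\varprojlim_k(\wt{\bf{A}}_{\QQ_p}/p^k)^{\Gamma,\lambda\han}\}_{\Gamma,\lambda}$ and $\{\varphi^{-m}({\bf{A}}_{\QQ_p})\}_m$ are mutually cofinal, and the colimit of the latter is $\varphi^{-\infty}({\bf{A}}_{\QQ_p})$. The point is that each individual $\varphi^{-m}({\bf{A}}_{\QQ_p})$ is $p$-adically complete, so the inverse limit in $k$ is harmless \emph{before} taking the union over $m$; it is the union that is not complete, which is precisely why $\la\subsetneq\pa$ here.
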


\begin{remark}
\label{rem:analytic_elements_a_tilde} The ring $\wt{\bf{A}}_{\QQ_p}^{\pa}$ can also be thought of as the subring of elements in $\wt{\bf{A}}_{\QQ_p}$ that for every $n\geq 1$  are congruent to an element of $\varphi^{-\infty}({\bf{A}_{{\QQ_p}}})$ modulo $p^n$.
This shows that the containment $\wt{\bf{A}}_{\QQ_p}^{\la} \subset \wt{\bf{A}}_{\QQ_p}^{\pa}$ is strict. For example, the power series $\sum_{n \geq 0} p^n \varphi^{-n}(1+T)$ belongs to $\wt{\bf{A}}_{\QQ_p}^{\pa}$ but not to $\wt{\bf{A}}_{\QQ_p}^{\la}$.
\end{remark}

\begin{remark}
\label{rem:lifting_field_of_norms} Let It has been desirable to have a theory of $(\varphi,\Gamma)$ modules when $\Gamma = \Gal(\overline{K}/K)/H$ is not Lubin-Tate. However, in such cases it seems difficult to find a suitable lift of the field of norms, that is, a replacement for the ring ${\bf{A}}_K$. A lift that is a power series ring inside $\wt{\bf{A}}^{H}$ is known not to exist under certain assumptions  (\cite{Be14}, \cite{Poy22}). When $\Gamma$ is abelian, Theorem \ref{thm:analytic_elems_a_tilde} suggests to consider $\wt{\bf{A}}^{H,\la}$ or $\wt{\bf{A}}^{H,\pa}$ as a suitable lift instead. Unpublished computations of ours suggest that in general $\wt{\bf{A}}^{H,\pa}$ surjects onto $\wt{\bf{E}}^{H,\la}$ and so gives a valid lift.
\end{remark}

We now proceed to proving Theorem \ref{thm:analytic_elems_a_tilde}. The description of $\wt{\bf{A}}_{\QQ_p}^{\pa}$ follows directly from the following lemma.

\begin{lemma}
\label{lem:analytic_elements_tilde_A_mod_pn}
Let $0 < r < 1$. We have
$$(\wt{\bf{A}}_{\QQ_p}/p^n)^\la = (\cup_m\varphi^{-m}({\bf{A}}_{\QQ_p}^{(0,p^{-m}r]}))/p^n =\varphi^{-\infty}({\bf{A}}_{\QQ_p})/p^n.$$
\end{lemma}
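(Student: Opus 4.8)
The plan is to deduce this lemma from the already-established machinery: Corollary \ref{cor:decompletion_integral_period_ring} computes $\widetilde{\mathbf{A}}^{(0,r],\la}_{\QQ_p} = \cup_m \varphi^{-m}(\mathbf{A}^{(0,p^{-m}r]}_{\QQ_p})$, and Theorem \ref{main_thm} together with Proposition \ref{prop:tate_sen_axioms_verification} gives the vanishing of $\R^1_{\la}$ for the relevant period rings. First I would observe that $\widetilde{\mathbf{A}}^{(0,r]}_{\QQ_p}$ is $p$-torsionfree, so there is a short exact sequence
$$0 \to \widetilde{\mathbf{A}}^{(0,r]}_{\QQ_p} \xrightarrow{p^n} \widetilde{\mathbf{A}}^{(0,r]}_{\QQ_p} \to \widetilde{\mathbf{A}}^{(0,r]}_{\QQ_p}/p^n \to 0$$
of $\Gamma_{\QQ_p}$-modules, and by Proposition \ref{prop:tate_sen_axioms_verification}(1) and Theorem \ref{main_thm}(2) we have $\R^1_{\la}(\widetilde{\mathbf{A}}^{(0,r]}_{\QQ_p}) = 0$. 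Taking locally analytic vectors therefore yields $(\widetilde{\mathbf{A}}^{(0,r]}_{\QQ_p}/p^n)^{\la} = \widetilde{\mathbf{A}}^{(0,r],\la}_{\QQ_p}/p^n = (\cup_m \varphi^{-m}(\mathbf{A}^{(0,p^{-m}r]}_{\QQ_p}))/p^n$, which is the middle term of the claimed chain of equalities.

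Next I would identify $\widetilde{\mathbf{A}}^{(0,r]}_{\QQ_p}/p^n$ with $\wt{\mathbf{A}}_{\QQ_p}/p^n$. For $n=1$ this is the statement $\widetilde{\mathbf{E}}_{\QQ_p} = \widetilde{\mathbf{A}}^{(0,r]}_{\QQ_p}/p$ recalled in the text; for general $n$ one argues that both sides are $[\varpi]$-adically (equivalently $T$-adically after identifying the variable) complete $\ZZ/p^n$-algebras with the same reduction mod $p$, or more directly notes that the quotient map $\widetilde{\mathbf{A}}_{\QQ_p} \to \widetilde{\mathbf{A}}^{(0,r]}_{\QQ_p}$ is surjective mod $p^n$ with the kernel being generated by elements already divisible by $p^n$ — this should follow from the construction of $\widetilde{\mathbf{A}}^{(0,r],\circ}$ as a $p$-adic completion of $\A_{\inf}\langle p/[\varpi]^{1/r}\rangle$ together with $\widetilde{\mathbf{A}}_{\QQ_p} = W(\mathbf{C}_p^\flat)$. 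Granting this identification, the left-hand equality $(\wt{\mathbf{A}}_{\QQ_p}/p^n)^\la = (\cup_m \varphi^{-m}(\mathbf{A}^{(0,p^{-m}r]}_{\QQ_p}))/p^n$ is exactly what we obtained in the previous paragraph.

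For the final equality, I would note that $\cup_m \varphi^{-m}(\mathbf{A}^{(0,p^{-m}r]}_{\QQ_p})/p^n = \varphi^{-\infty}(\mathbf{A}_{\QQ_p})/p^n$: indeed, $\varphi^{-\infty}(\mathbf{A}_{\QQ_p}) = \cup_m \varphi^{-m}(\mathbf{A}_{\QQ_p})$ and mod $p^n$ the overconvergence constraint disappears, since every element of $\mathbf{A}_{\QQ_p}/p^n = \FF_p((X))[[p]]/p^n$-type form is a Laurent polynomial in $T$ with bounded coefficients times a convergent tail, and such elements automatically lie in some $\mathbf{A}^{(0,s]}_{\QQ_p}$ modulo $p^n$; applying $\varphi^{-m}$ and matching radii gives the containment $\varphi^{-m}(\mathbf{A}_{\QQ_p}) \subset \varphi^{-m}(\mathbf{A}^{(0,p^{-m}r]}_{\QQ_p}) + p^n \wt{\mathbf{A}}_{\QQ_p}$, and conversely $\varphi^{-m}(\mathbf{A}^{(0,p^{-m}r]}_{\QQ_p}) \subset \varphi^{-m}(\mathbf{A}_{\QQ_p})$ trivially.

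The main obstacle I anticipate is the bookkeeping around the topologies and the precise identification $\widetilde{\mathbf{A}}^{(0,r]}_{\QQ_p}/p^n \cong \wt{\mathbf{A}}_{\QQ_p}/p^n$ as $\Gamma_{\QQ_p}$-equivariant $\ZZ_p$-Tate algebras, together with checking that the $\la$-functor genuinely commutes with the reduction in the way I used — this is legitimate precisely because $\R^1_{\la}$ vanishes, but one must make sure the pseudouniformizer used to define $\R^i_{\la}(\widetilde{\mathbf{A}}^{(0,r]}_{\QQ_p})$ (namely $[\varpi]$, playing the role of $\varpi$ in $\mathsection 3$) is compatible with the $T$-adic structure on the reductions, and that the lattice $\widetilde{\mathbf{A}}^{(0,r],\circ}_{\QQ_p}$ is $G$-stable — both of which hold but deserve a sentence. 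Everything else is a formal consequence of results already in hand.
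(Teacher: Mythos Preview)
Your proposal is correct and follows essentially the same route as the paper. The only organizational difference is that you use the single exact sequence $0 \to \widetilde{\mathbf{A}}^{(0,r]}_{\QQ_p} \xrightarrow{p^n} \widetilde{\mathbf{A}}^{(0,r]}_{\QQ_p} \to \widetilde{\mathbf{A}}^{(0,r]}_{\QQ_p}/p^n \to 0$ together with $\R^1_{\la}(\widetilde{\mathbf{A}}^{(0,r]}_{\QQ_p})=0$ and Corollary~\ref{cor:decompletion_integral_period_ring} in one step, whereas the paper does a d\'evissage on $n$ (using $\R^1_{\la}(p^{n-1}\widetilde{\mathbf{A}}^{(0,r]}_{\QQ_p}/p^n)=0$ at each step and Theorem~\ref{thm:E_decompletion} as the base case); the identification $\widetilde{\mathbf{A}}^{(0,r]}_{\QQ_p}/p^n \cong \widetilde{\mathbf{A}}_{\QQ_p}/p^n$ and the final equality are likewise handled by d\'evissage in the paper rather than by the direct argument you sketch, but the content is the same.
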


\begin{proof}
We have $$\wt{{\bf{A}}}_{\QQ_p}^{(0,r]}/p^n = \wt{{\bf{A}}}_{\QQ_p}/p^n$$ by devissage to the case $n=1$ (where both are equal to $\wt{\bf{E}}_{\QQ_p}$).
By Theorem \ref{main_thm} we have $\R^1_{\la}(p^{n-1}\wt{{\bf{A}}}^{(0,r]}_{\QQ_p}/p^n) = 0$. Hence by devissage we deduce
$$(\wt{{\bf{A}}}_{\QQ_p}^{(0,r]}/p^n)^\la = (\cup_m\varphi^{-m}({\bf{A}}_{\QQ_p}^{(0,p^{-m}r]}))/p^n$$
(the case $n=1$ being Theorem \ref{thm:E_decompletion}).
Finally, we need to explain why 
$$(\cup_m\varphi^{-m}({\bf{A}}_{\QQ_p}^{(0,p^{-m}r]}))/p^n =\varphi^{-\infty}({\bf{A}}_{\QQ_p})/p^n,$$
but this is once again true by devissage.
\end{proof}

We now turn to studying $\wt{\bf{A}}_{\QQ_p}^\la.$
\begin{lemma}
\label{lem:control_of_analyticity_a_tilde}
1. Given $\Gamma \subset \Gamma_{\QQ_p}$ open, $\lambda \in \RR$ and $r < 1$ there exists an $m \geq 0$ such that $$(\wt{{\bf{A}}}_{\QQ_p}^{(0,r]})^{\Gamma,\lambda\han} \subset \varphi^{-m}({\bf{A}}_{\QQ_p}^{(0,rp^{-m}]}).$$

2. Given $m \geq 0$ and $r < 1$ there exists $\Gamma \subset \Gamma_{\QQ_p}$ open and $\lambda \in \RR$ such that $$\varphi^{-m}({\bf{A}}_{\QQ_p}^{(0,rp^{-m}]})\subset (\wt{{\bf{A}}}_{\QQ_p}^{(0,r]})^{\Gamma,\lambda\han}.$$
\end{lemma}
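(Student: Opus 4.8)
The two inclusions are dual, and the strategy for both is to compare the $\lambda$-analytic condition for the $\Gamma$-action on $\wt{\mathbf{A}}^{(0,r]}_{\QQ_p}$ with the overconvergence radius that controls $\varphi^{-m}(\mathbf{A}^{(0,rp^{-m}]}_{\QQ_p})$, using the Frobenius $\varphi$ to convert between "analyticity depth" and "radius of convergence." The key mechanism is the classical identity $a(T) = (1+T)^a - 1$ from the proof of Proposition \ref{prop:tate_sen_axioms_verification}, which shows that the $\Gamma_{\QQ_p}$-action on $\mathbf{A}^{(0,r]}_{\QQ_p}$ becomes "more analytic" after applying $\varphi^{-m}$ (one gains $m$ units in the $p$-adic valuation appearing in the binomial coefficients), together with the fact that $\varphi$ scales the interval of convergence by $p^{-1}$.

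For part 1, I would start from an element $x \in (\wt{\mathbf{A}}^{(0,r]}_{\QQ_p})^{\Gamma,\lambda\han}$. The plan is to invoke Theorem \ref{thm:bergers_char_0_theorem} (Berger's characteristic $0$ overconvergence result) applied to $x[1/p]$, after first intersecting with an appropriate $\widetilde{\mathbf{B}}^{[s,r]}_{\QQ_p}$: since $x$ is $\lambda$-analytic it lies in $\widetilde{\mathbf{B}}^{[s,r],\la}_{\QQ_p} = \cup_n \varphi^{-n}(\mathbf{B}^{[p^{-n}s,p^{-n}r]}_{\QQ_p})$, and so by Lemma \ref{lem:intersected_period_rings} already $x \in \cup_m \varphi^{-m}(\mathbf{A}^{(0,p^{-m}r]}_{\QQ_p})$. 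The remaining point is \emph{uniformity}: one must show that the index $m$ can be bounded in terms of $\lambda$, $\Gamma$ and $r$ only. This is where I would use the quantitative content of Corollary \ref{cor:making_lambda_smaller} and Corollary \ref{cor:decompletion_integral_period_ring}: the descent $\widetilde{\mathbf{A}}^{(0,r],\la}_{\QQ_p} = \cup_m \varphi^{-m}(\mathbf{A}^{(0,p^{-m}r]}_{\QQ_p})$ is realized via the Tate--Sen modules $D_{H_0,n}$, and the parameter $\lambda$ controls precisely which $D_{H_0,n}$ an analytic vector descends to, hence the Frobenius depth $m$. Tracking this through Proposition \ref{prop:action_uniformity} gives the bound.

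For part 2, given $m$ and $r$, the goal is to exhibit $\Gamma$ and $\lambda$ with $\varphi^{-m}(\mathbf{A}^{(0,rp^{-m}]}_{\QQ_p}) \subset (\wt{\mathbf{A}}^{(0,r]}_{\QQ_p})^{\Gamma,\lambda\han}$. Here I would argue directly: by Corollary \ref{cor:descent_loc_analytic} (applied to the Tate--Sen data of Proposition \ref{prop:tate_sen_axioms_verification}.1, with $\Lambda_n = \varphi^{-n}(\mathbf{A}^{(0,p^{-n}r]}_{\QQ_p})$), every element of $\varphi^{-m}(\mathbf{A}^{(0,rp^{-m}]}_{\QQ_p}) = \Lambda_m$ is locally analytic, and moreover the proof of that corollary, via the expansion $\gamma^x(y) = \sum_{k\ge 0}(\gamma-1)^k(y)\binom{x}{k}$, together with Proposition \ref{prop:action_uniformity}, shows that the analyticity parameter $\lambda$ needed is governed by the constant $s = s(\Gamma,m)$ measuring how $\gamma-1$ contracts on $\Lambda_m$. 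Explicitly, if $p^\lambda < s$ then the expansion lands in $\mathcal{C}^{\lambda\han}(\Gamma, \wt{\mathbf{A}}^{(0,r]}_{\QQ_p})$, so it suffices to choose $\Gamma$ small enough and then $\lambda$ with $p^\lambda < s(\Gamma,m)$. One also must check that $\Lambda_m$ is a \emph{finite free} module over $\Lambda_0$ with a $c$-fixed basis, which is standard, so that Proposition \ref{prop:action_uniformity} applies.

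\textbf{Main obstacle.} The genuinely delicate step is the uniformity in part 1: passing from the \emph{pointwise} statement "every $\lambda$-analytic vector lies in some $\varphi^{-m}(\mathbf{A}^{(0,p^{-m}r]}_{\QQ_p})$" to a \emph{uniform bound} $m = m(\Gamma,\lambda,r)$ independent of the vector. I expect this to require carefully reading off, from the Tate--Sen descent (Proposition \ref{prop:D_n_product_decomposition} and Theorem \ref{thm:cohomology_for_c_small_pairs}), the precise relation between the $c$-smallness parameter $\lambda$ and the level $n$ of $D_{H_0,n}$ into which the module $\mathcal{C}^{\lambda\han}(\Gamma,\wt{\mathbf{A}}^{(0,r]}_{\QQ_p})$ descends; equivalently, one reverse-engineers the estimate $p^\lambda < s(\Gamma,n)$ that appeared in the proof of Proposition \ref{prop:killing_cocycle_coming_from_small_pairs}. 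Everything else is a matter of combining Lemma \ref{lem:intersected_period_rings}, Theorem \ref{thm:bergers_char_0_theorem} and Corollary \ref{cor:descent_loc_analytic}.
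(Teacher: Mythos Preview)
Your proposal identifies exactly the two ingredients the paper uses: for part~1, Theorem~4.4 of \cite{Be16} together with Lemma~\ref{lem:intersected_period_rings}; for part~2, Corollary~\ref{cor:descent_loc_analytic}. So the overall strategy is the same as the paper's.

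The one place you diverge is in your ``main obstacle''. You worry that Theorem~\ref{thm:bergers_char_0_theorem}, as restated in this paper, only gives the colimit identity $\widetilde{\mathbf{B}}^{[s,r],\la}_{\QQ_p} = \cup_n \varphi^{-n}(\mathbf{B}^{[p^{-n}s,p^{-n}r]}_{\QQ_p})$ and hence does not immediately yield a \emph{uniform} $m$ depending only on $(\Gamma,\lambda,r)$; you then propose to recover uniformity by tracking constants through the Tate--Sen machinery (Propositions~\ref{prop:action_uniformity}, \ref{prop:D_n_product_decomposition}, etc.). This extra work is not needed. The paper's proof cites Theorem~4.4 of \cite{Be16} directly, and Berger's original statement and proof are level-by-level: for each fixed radius of analyticity (equivalently, for each pair $(\Gamma,\lambda)$ in the present language) the analytic vectors already land in a specific $\varphi^{-m}(\mathbf{B}^{[p^{-m}s,p^{-m}r]}_{\QQ_p})$. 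Intersecting with $\widetilde{\mathbf{A}}^{(0,r]}_{\QQ_p}$ and applying Lemma~\ref{lem:intersected_period_rings} then gives part~1 with no further argument. So your proposed detour through the Tate--Sen constants would work, but it reproves something already contained in the cited source.

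For part~2 your argument is precisely the paper's: Corollary~\ref{cor:descent_loc_analytic}, read through the expansion $\gamma^x(y)=\sum_k(\gamma-1)^k(y)\binom{x}{k}$ and Proposition~\ref{prop:action_uniformity}, produces $\Gamma$ and $\lambda$ uniformly for all of $\Lambda_m=\varphi^{-m}(\mathbf{A}^{(0,rp^{-m}]}_{\QQ_p})$.
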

\begin{proof}
Part 1 follows from Theorem 4.4 of \cite{Be16} and Lemma \ref{lem:intersected_period_rings}. Part 2 follows from Corollary \ref{cor:descent_loc_analytic}.
\end{proof}

\begin{proposition}
\label{prop:control_of_analyticity_a_tilde_mod_p_powers}
1. For $\Gamma \subset \Gamma_{\QQ_p}$ open and $\lambda \in \RR$ there exists some $m \geq 0$ so that
for all $n \geq 1$ there is an inclusion
$$(\wt{{\bf{A}}}_{\QQ_p}^{(0,r]}/p^n)^{\Gamma,\lambda \han} \subset \varphi^{-m}({\bf{A}}_{\QQ_p}^{(0,rp^{-m}]})/p^n.$$
2. Given $m \geq 0$ there exists $\Gamma \subset \Gamma_{\QQ_p}$ open and $\lambda \in \RR$ so that for all $n \geq 0$ there is an inclusion
$$\varphi^{-m}({\bf{A}}_{\QQ_p}^{(0,rp^{-m}]})/p^n \subset (\wt{{\bf{A}}}_{\QQ_p}^{(0,r]}/p^n)^{\Gamma,\lambda \han}.$$
\end{proposition}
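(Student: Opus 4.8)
The plan is to deduce both inclusions from Lemma \ref{lem:control_of_analyticity_a_tilde} by a dévissage argument over the $p$-power filtration, using the vanishing of $\R^1_{\la}$ provided by Theorem \ref{main_thm} to keep the analyticity parameters $(\Gamma,\lambda)$ uniform in $n$. The point is that Lemma \ref{lem:control_of_analyticity_a_tilde} handles the case $n=1$ (or rather the characteristic-$0$ statement, from which the mod-$p$ statement follows as in Lemma \ref{lem:analytic_elements_tilde_A_mod_pn}), and the extension to all $n$ should come from induction together with control of the relevant exact sequences.

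\emph{Part 2.} This is the easier direction. Given $m \geq 0$, apply part 2 of Lemma \ref{lem:control_of_analyticity_a_tilde} to obtain $\Gamma$ and $\lambda$ with $\varphi^{-m}({\bf{A}}_{\QQ_p}^{(0,rp^{-m}]}) \subset (\wt{{\bf{A}}}_{\QQ_p}^{(0,r]})^{\Gamma,\lambda\han}$. Now $\varphi^{-m}({\bf{A}}_{\QQ_p}^{(0,rp^{-m}]})$ descends to $D_{H',n}$-type data (concretely, its elements are $\Gamma$-locally analytic by Corollary \ref{cor:descent_loc_analytic} with uniform parameters), so reduction modulo $p^n$ carries locally analytic vectors to locally analytic vectors with the same $(\Gamma,\lambda)$: given $x \in \varphi^{-m}({\bf{A}}_{\QQ_p}^{(0,rp^{-m}]})$, the function $g \mapsto g(x)$ on $\Gamma$ lies in $\mathcal{C}^{\lambda\han}(\Gamma,\wt{{\bf{A}}}_{\QQ_p}^{(0,r]})$, hence its reduction lies in $\mathcal{C}^{\lambda\han}(\Gamma,\wt{{\bf{A}}}_{\QQ_p}^{(0,r]}/p^n)$, which exhibits $\bar x \in (\wt{{\bf{A}}}_{\QQ_p}^{(0,r]}/p^n)^{\Gamma,\lambda\han}$. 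This gives the inclusion for all $n$ with the single choice of $(\Gamma,\lambda)$ produced by the case "$m$".

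\emph{Part 1.} This is where the real work lies, and I expect it to be the main obstacle. Fix $\Gamma$ and $\lambda$. By part 1 of Lemma \ref{lem:control_of_analyticity_a_tilde} there is an $m_0$ with $(\wt{{\bf{A}}}_{\QQ_p}^{(0,r]})^{\Gamma,\lambda\han} \subset \varphi^{-m_0}({\bf{A}}_{\QQ_p}^{(0,rp^{-m_0}]})$; the claim is that the \emph{same} $m$ (possibly enlarged once, but independent of $n$) works mod $p^n$. Proceed by induction on $n$, the case $n=1$ being Theorem \ref{thm:E_decompletion} (phrased as in Lemma \ref{lem:analytic_elements_tilde_A_mod_pn}). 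For the inductive step, use the short exact sequence
$$0 \rightarrow p^{n-1}\wt{{\bf{A}}}_{\QQ_p}^{(0,r]}/p^n \rightarrow \wt{{\bf{A}}}_{\QQ_p}^{(0,r]}/p^n \rightarrow \wt{{\bf{A}}}_{\QQ_p}^{(0,r]}/p^{n-1} \rightarrow 0,$$
noting $p^{n-1}\wt{{\bf{A}}}_{\QQ_p}^{(0,r]}/p^n \cong \wt{\bf{E}}_{\QQ_p}$. Taking $\Gamma$-locally analytic vectors at a fixed level $(\Gamma,\lambda)$ and using that $\R^1_{\la}$ of the sub vanishes (Theorem \ref{main_thm} and Proposition \ref{prop:tate_sen_axioms_verification}), together with the compatible statement for $\cup_k\varphi^{-k}({\bf{A}}_{\QQ_p}^{(0,p^{-k}r]})$, one gets that an element of $(\wt{{\bf{A}}}_{\QQ_p}^{(0,r]}/p^n)^{\Gamma,\lambda\han}$ maps to something in $\varphi^{-m}({\bf{A}}_{\QQ_p}^{(0,rp^{-m}]})/p^{n-1}$ by the inductive hypothesis and lifts, modulo a class in $\wt{\bf{E}}_{\QQ_p}^{\Gamma,\lambda'\han}$ (for some $\lambda'$ controlled by $\lambda$, via Corollary \ref{cor:making_lambda_smaller}), which is contained in $\varphi^{-m}(\FF_p((X)))$ for $m$ depending only on $\Gamma,\lambda$ by the $n=1$ case. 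The delicate part is checking that the $m$ needed does not grow with $n$: this requires that the locally analytic vectors of $\wt{\bf{E}}_{\QQ_p}$ appearing as successive graded pieces all sit in $\varphi^{-m}(\FF_p((X)))$ for one fixed $m$, which is exactly the content of the $n=1$ case applied with the fixed parameters; and that the lifting does not degrade the analyticity radius, which follows because the connecting maps in the long exact sequence for $\R_{\la}^\bullet$ are strict by the open mapping theorem (as recorded in $\mathsection{2.3}$). Assembling these, one concludes $(\wt{{\bf{A}}}_{\QQ_p}^{(0,r]}/p^n)^{\Gamma,\lambda\han} \subset \varphi^{-m}({\bf{A}}_{\QQ_p}^{(0,rp^{-m}]})/p^n$ with $m$ independent of $n$, as desired.
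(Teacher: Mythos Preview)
Your Part 2 is fine and matches the paper's argument: lift to $\varphi^{-m}({\bf A}_{\QQ_p}^{(0,rp^{-m}]})$ and reduce mod $p^n$, using that reduction carries $(\Gamma,\lambda)$-analytic vectors to $(\Gamma,\lambda)$-analytic vectors.

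Your Part 1, however, has a genuine gap. The induction on $n$ forces you, at each step, to compare $x$ with a lift $y\in\varphi^{-m}({\bf A}_{\QQ_p}^{(0,rp^{-m}]})/p^n$. But $y$ is only $(\Gamma_m,\lambda_m)$-analytic for parameters coming from Part 2 applied to $m$, so the residue $x-y\in\wt{\bf E}_{\QQ_p}$ is only analytic for the coarser pair $(\Gamma\cap\Gamma_m,\min(\lambda,\lambda_m))$, not for the original $(\Gamma,\lambda)$. Applying the $n=1$ case to this coarser pair yields some $m'$ depending on $m$, and the recursion $m\mapsto\max(m,m')$ has no reason to stabilize. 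Your appeal to ``$\R^1_{\la}$ of the sub vanishes'' and to strictness from the open mapping theorem does not fix this: the vanishing of $\R^1_{\la}$ is a statement about the \emph{colimit} over all $(\Gamma,\lambda)$, and says nothing about lifting at a fixed level without degrading parameters; strictness only gives exactness of the long exact sequence, not parameter preservation.

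The paper's proof bypasses the induction entirely. It uses the short exact sequence $0\to\wt{\bf A}_{\QQ_p}^{(0,r]}\xrightarrow{p^n}\wt{\bf A}_{\QQ_p}^{(0,r]}\to\wt{\bf A}_{\QQ_p}^{(0,r]}/p^n\to 0$, whose kernel is $\wt{\bf A}_{\QQ_p}^{(0,r]}$ \emph{independently of $n$}. The obstruction to lifting therefore lies in $\mathrm{H}^1(\Gamma,\mathcal{C}^{\lambda\han}(\Gamma,\wt{\bf A}_{\QQ_p}^{(0,r]}))$, the same group for every $n$. Now one invokes not just $\R^1_{\la}=0$ but the \emph{strong} vanishing recorded in Remark \ref{rem:strong_local_analyticity}: there is a single pair $(\Gamma',\lambda')$ annihilating this entire $\mathrm{H}^1$. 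Thus every $(\Gamma,\lambda)$-analytic element mod $p^n$ lifts to a $(\Gamma',\lambda')$-analytic element of $\wt{\bf A}_{\QQ_p}^{(0,r]}$, uniformly in $n$, and one finishes by a single application of Lemma \ref{lem:control_of_analyticity_a_tilde}.1 to $(\Gamma',\lambda')$. The missing idea in your argument is precisely this one-step lift to characteristic $0$ via strong vanishing.
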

\begin{proof}
We start by proving part 2. There is a surjection $$\varphi^{-m}({\bf{A}}_{\QQ_p}^{(0,rp^{-m}]}) \twoheadrightarrow \varphi^{-m}({\bf{A}}_{\QQ_p}^{(0,rp^{-m}]})/p^n.$$ Hence by part 2 of the previous lemma and functoriality, there exists some pair $\Gamma, \lambda$, independent of $n$, such that every element of $\varphi^{-m}({\bf{A}}_{\QQ_p}^{(0,rp^{-m}]})/p^n$ lies in $(\wt{{\bf{A}}}_{\QQ_p}^{(0,r]}/p^n)^{\Gamma,\lambda \han}$.
The proof of part 1 is similar but we need another trick, because we do not know that $(\wt{{\bf{A}}}_{\QQ_p}^{(0,r]})^{\Gamma,\lambda \han}$ surjects onto $(\wt{{\bf{A}}}_{\QQ_p}^{(0,r]}/p^n)^{\Gamma,\lambda \han}$. Rather, we note that by Remark \ref{rem:strong_local_analyticity}, the entirety of $\R^1_{\Gamma,\lambda}(\wt{{\bf{A}}}_{\QQ_p}^{(0,r]})$ maps to zero in some $\R^1_{\Gamma',\lambda'}(\wt{{\bf{A}}}_{\QQ_p}^{(0,r]})$. This implies that given a pair $(\Gamma,\lambda)$ there exist some other pair $(\Gamma',\lambda')$ so that for all $n\geq 1$ we have that elements of $(\wt{\bf{A}}_{\QQ_p}^{(0,r]}/p^n)^{\Gamma,\lambda \han}$ lift to $(\wt{\bf{A}}_{\QQ_p}^{(0,r]})^{\Gamma',\lambda' \han}$. We can now argue as before by applying part 1 of the previous lemma (for the pair $\Gamma',\lambda'$).
\end{proof}

\begin{corollary}
    We have $\wt{{\bf{A}}}_{\QQ_p}^{\la} = \varphi^{-\infty}({\bf{A}}_{\QQ_p}).$
\end{corollary}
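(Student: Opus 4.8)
The plan is to unwind the definition of $\wt{\mathbf{A}}_{\QQ_p}^{\la}$ for the Fréchet presentation $\wt{\mathbf{A}}_{\QQ_p}=\varprojlim_n \wt{\mathbf{A}}_{\QQ_p}/p^n$ and then feed in Proposition~\ref{prop:control_of_analyticity_a_tilde_mod_p_powers}. Fix $0<r<1$ with $1/r\in\ZZ[1/p]$. Since $\wt{\mathbf{A}}_{\QQ_p}/p^n=\wt{\mathbf{A}}_{\QQ_p}^{(0,r]}/p^n$ for every $n$, the definition of locally analytic vectors of an inverse limit gives
\[
\wt{\mathbf{A}}_{\QQ_p}^{\la}=\varinjlim_{\Gamma,\lambda}\varprojlim_n\bigl(\wt{\mathbf{A}}_{\QQ_p}^{(0,r]}/p^n\bigr)^{\Gamma,\lambda\han},
\]
the colimit running over open $\Gamma\subset\Gamma_{\QQ_p}$ and $\lambda\in\RR$. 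Now Proposition~\ref{prop:control_of_analyticity_a_tilde_mod_p_powers} says precisely that, viewed inside $\wt{\mathbf{A}}_{\QQ_p}^{(0,r]}/p^n$, the directed family $\{(\wt{\mathbf{A}}_{\QQ_p}^{(0,r]}/p^n)^{\Gamma,\lambda\han}\}_{\Gamma,\lambda}$ and the directed family $\{\varphi^{-m}(\mathbf{A}_{\QQ_p}^{(0,rp^{-m}]})/p^n\}_{m\geq 0}$ are mutually cofinal, with the cofinality bound relating $m$ and $(\Gamma,\lambda)$ in either direction independent of $n$. (This uniformity in $n$ is exactly what part~1 of that proposition extracts from the strong vanishing of $\R^1_{\la}$ recorded in Remark~\ref{rem:strong_local_analyticity}.)

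Because these cofinality bounds do not depend on $n$, applying $\varprojlim_n$ preserves them: the directed families of submodules $\{\varprojlim_n(\wt{\mathbf{A}}_{\QQ_p}^{(0,r]}/p^n)^{\Gamma,\lambda\han}\}_{\Gamma,\lambda}$ and $\{\varprojlim_n\varphi^{-m}(\mathbf{A}_{\QQ_p}^{(0,rp^{-m}]})/p^n\}_{m\geq 0}$ of $\wt{\mathbf{A}}_{\QQ_p}=\varprojlim_n\wt{\mathbf{A}}_{\QQ_p}^{(0,r]}/p^n$ are mutually cofinal, hence have the same union; since all the transition maps in sight are inclusions of submodules, this union computes the relevant colimit and
\[
\wt{\mathbf{A}}_{\QQ_p}^{\la}=\varinjlim_m\varprojlim_n\varphi^{-m}\bigl(\mathbf{A}_{\QQ_p}^{(0,rp^{-m}]}\bigr)/p^n .
\]
It then remains to identify each term. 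By the same dévissage used in Lemma~\ref{lem:analytic_elements_tilde_A_mod_pn} — whose content modulo $p$ is that an overconvergence condition on a fixed annulus is automatic in characteristic $p$ — one has $\varphi^{-m}(\mathbf{A}_{\QQ_p}^{(0,rp^{-m}]})/p^n=\varphi^{-m}(\mathbf{A}_{\QQ_p})/p^n$ compatibly in $n$; and since $\varphi$ preserves the $p$-adic valuation, $\varphi^{-m}(\mathbf{A}_{\QQ_p})$ is $p$-adically complete (being the isometric image of the $p$-adically complete ring $\mathbf{A}_{\QQ_p}$), so $\varprojlim_n\varphi^{-m}(\mathbf{A}_{\QQ_p}^{(0,rp^{-m}]})/p^n=\varphi^{-m}(\mathbf{A}_{\QQ_p})$. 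Hence $\wt{\mathbf{A}}_{\QQ_p}^{\la}=\varinjlim_m\varphi^{-m}(\mathbf{A}_{\QQ_p})=\varphi^{-\infty}(\mathbf{A}_{\QQ_p})$.

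The only point that I expect to require genuine care is the interchange of $\varinjlim_{\Gamma,\lambda}$ with $\varprojlim_n$ above: it is legitimate precisely because Proposition~\ref{prop:control_of_analyticity_a_tilde_mod_p_powers} supplies cofinality bounds uniform in $n$, which itself rests on the strong form of the vanishing theorem. The remaining ingredients — the dévissage identification modulo $p^n$ and the $p$-adic completeness of $\varphi^{-m}(\mathbf{A}_{\QQ_p})$ — are routine.
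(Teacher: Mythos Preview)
Your proof is correct and follows essentially the same approach as the paper: both use Proposition~\ref{prop:control_of_analyticity_a_tilde_mod_p_powers} to establish that the two directed systems of inverse limits are mutually cofinal, and then identify the respective colimits. Your write-up is more explicit about the points the paper leaves tacit --- the uniformity in $n$ of the cofinality bounds (and its reliance on Remark~\ref{rem:strong_local_analyticity}), the term-by-term identification $\varphi^{-m}(\mathbf{A}_{\QQ_p}^{(0,rp^{-m}]})/p^n=\varphi^{-m}(\mathbf{A}_{\QQ_p})/p^n$, and the $p$-adic completeness of $\varphi^{-m}(\mathbf{A}_{\QQ_p})$ --- but the underlying argument is the same.
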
 

\begin{proof}
The proposition shows that the direct systems
$$\{\varprojlim_{n}({\bf{A}}_{\QQ_p}^{(0,r]}/p^n)^{\Gamma,\lambda \han}\}_{\Gamma,\lambda}$$and $$\{\varprojlim_{n}\varphi^{-m}({\bf{A}}_{\QQ_p}^{(0,rp^{-m}]})/p^n \}_m$$ are cofinal. So the corresponding direct limits ranging over $\Gamma,\lambda$ and over $m$ are naturally isomorphic. These give ${\bf{A}}_{\QQ_p}^\la$ and $\varphi^{-\infty}({\bf{A}}_{\QQ_p})$ respectively and hence we establish the desired equality.
\end{proof}

\bibliography{main}
\end{document}